\newtheorem{theorem}{Theorem}[section]
\newtheorem{proposition}[theorem]{Proposition}
\newtheorem{lemma}[theorem]{Lemma}
\newtheorem{definition}[theorem]{Definition}
\newtheorem{corollary}[theorem]{Corollary}
\newtheorem{remark}{Remark}[section]
\newcommand{\important}[1]{\textbf{#1}}
\newcommand{\allEvery}[1]{\emph{#1}}
\newtheorem{thm}{Theorem}
\theoremstyle{definition}
\theoremstyle{remark}
\newtheorem{cor}[thm]{Corollary}
\newcommand{\norm}[1]{\left\Vert#1\right\Vert}
\newcommand{\RR}{\mathbb{R}}
\newcommand{\ZZ}{\mathbb{Z}}
\newcommand{\vis}{\operatorname{vis}}
\newcommand{\dist}{\operatorname{dist}}
\newcommand{\mass}{\operatorname{mass}}
\newcommand{\supp}{\operatorname{supp}}
\newcommand{\diam}{\operatorname{diam}}
\newcommand{\conv}{\operatorname{conv}}
\numberwithin{equation}{section}
\newcommand{\rr}{{\mathbb R}}
\newcommand{\zz}{{\mathbb Z}}
\newcommand{\nn}{{\mathbb N}}
\newcommand{\Fav}{{\rm Fav}}
\newcommand{\essinf}{{\rm essinf}}
\newcommand{\sss}{\mathbb{S}}
\begin{document}

\title{Quantitative visibility estimates for unrectifiable sets in the plane}
\author{M. Bond, I. {\L}aba, J. Zahl}
\date{July 8, 2014}
\maketitle

\begin{abstract}
The ``visibility'' of a planar set $S$ from a point $a$ is defined as the normalized size of the radial projection of $S$ from $a$ to the unit circle centered at $a$. Simon and Solomyak \cite{SS} proved that unrectifiable self-similar one-sets are invisible from every point in the plane. We quantify this by giving an upper bound on the visibility of $\delta$-neighbourhoods of such sets. We also prove lower bounds on the visibility of $\delta$-neighborhoods of more general sets, based in part on Bourgain's discretized sum-product estimates in \cite{Bourgain1}.
\end{abstract}

\section{Introduction}\label{introductionSection}

Given $a\in\rr^2$, we define the radial projection $P_a:\rr^2\setminus a\to\sss^1$ by
\begin{equation*}
P_a(x):=\frac{x-a}{|x-a|}.
\end{equation*}
The \important{visibility} of a measurable set $S\subset\rr^2$ from $a$ is
\begin{equation*}
\vis(a;S):=\frac1{2\pi}|P_a(S)|,
\end{equation*}
the normalized measure of the set of angles at which $S$ is visible from the \important{vantage point} $a$. Informally,
$\vis(a;S)$ is the proportion of the ``field of vision'' $S$ takes up for an observer situated at $a$.

Suppose that $S\subset\rr^2$ is a one-set, that is, a Borel set whose 1-dimensional Hausdorff measure is positive and finite. Suppose furthermore that $S$ is purely unrectifiable. Marstrand \cite[Sections 8 and 9]{Marstrand} proved that $P_a(S)$ has 1-dimensional Lebesgue measure 0 for all $a\in\rr^2\setminus X$, where the exceptional set $X$ has Hausdorff dimension at most 1, and demonstrated by means of an example that the exceptional set can indeed be 1-dimensional. (See also \cite{mattila81, cs, cs2} for further results on possible sets of vantage points from which a purely unrectifiable one-set can be visible.) In the converse direction,  it follows from Marstrand's projection theorem (\cite{Marstrand}, Theorem II) via projective transformations (cf. Section \ref{sec-projective}) that $P_a(S)$ has Hausdorff dimension 1 for Lebesgue-almost all $a\in\rr^2$. Simple examples (see Section \ref{intro-lower}) show that it is in fact possible for $P_a(S)$ to have dimension less than 1.

However, if $\mathcal J$ is a self-similar set, then stronger statements hold.

\begin{definition}\label{defnOfSelfSim}
(a) A set $\mathcal{J}\subset\rr^2$ is \important{self-similar} if it satisfies the condition
\begin{equation}\label{defnOfJSelfSim}
\mathcal{J} = \bigcup_{i=1}^s T_i(\mathcal J),
\end{equation}
where each map $T_i$ is of the form
\begin{equation}\label{e-similitude}
T_i(x) = \lambda_i \mathcal{O}_i x + z_i.
\end{equation}
Here, $0<\lambda_i<1$, and $\mathcal{O}_i$ is an orthogonal transformation.

(b) We will furthermore say that $\mathcal{J}$ satisfies the \important{Open Set Condition} if there exists an open set $O\subset\rr^2$ such that $\bigcup_{i=1}^s T_i(O)\subset O$ and the sets $T_i(O)$ are disjoint.

\end{definition}

We will refer to the maps in (\ref{e-similitude}) as \important{similitudes}. If $\mathcal{O}_i=I$, then the maps are called \important{homotheties}.

It is well known (\cite{Hutchinson}; see also \cite[Section 8.3]{falconer-book}) that, given the mappings $T_i$ as in (\ref{e-similitude}), there is a unique non-empty compact set $\mathcal{J}$ obeying (\ref{defnOfJSelfSim}).
Assuming the Open Set Condition, the Hausdorff dimension of $\mathcal{J}$ is equal to its \important{similarity dimension}, i.e.  the unique number $\alpha$ such that $\sum_{i=1}^s |\lambda_i|^\alpha=1$. Moreover, $\mathcal{J}$ has positive and finite $\alpha$-dimensional Hausdorff measure (see e.g. \cite[Theorem 8.6]{falconer-book}.) If the points $z_i$ are collinear, then $\mathcal{J}$ is a subset of a line. If $\alpha=1$ and $z_i$ are not all collinear, then $\mathcal{J}$ is a purely unrectifiable one-set.

In \cite{SS}, Simon and Solomyak showed that if $\mathcal{J}$ is a self-similar one-set in the plane satisfying the Open Set Condition and not contained in a line,  then $\vis(a;\mathcal{J})=0$ for \allEvery{every} $a\in\rr^2$ (i.e. $S$ is invisible from every vantage point, with no exceptions). On the other hand, it follows from the results of \cite{HS} and \cite{hochman2} that, under the slightly stronger Strong Separation Condition and assuming that all similarities in (\ref{defnOfJSelfSim}) are homotheties, $P_a(S)$ has Hausdorff dimension 1 for  every $a\in\rr^2$ (see Proposition \ref{hochman-prop}).

We will be interested in quantifying the above estimates, in the sense of proving upper and lower bounds on $\vis(a;S_\delta)$ as $\delta\to 0$, where $S_\delta$ is the $\delta$-neighbourhood of an unrectifiable one-set.
In Section \ref{upper}, we quantify the result of \cite{SS} by proving upper bounds on the visibility of small neighborhoods of 1-dimensional self-similar sets.
Conversely, in Section \ref{lower} we prove lower bounds on the visibility, from all vantage points outside of a small exceptional set, of a more general type of finite scale unrectifiable sets.

We note that there has also been interest in the question of estimating the size of those parts of subsets
of $\rr^n$ that are visible from points or affine subspaces in $\rr^n$, see  \cite{AJJRS, FalconerFraser, JJMO, ONeil}. We refer the reader to \cite{mattila-review} for an introduction to these and other related problems.


\subsection{Upper bounds for self-similar sets}
\label{sec-upper-selfsim}

In this part of the paper, we will only consider 1-dimensional self-similar sets with no rotations and with equal contraction ratios, i.e. sets $\mathcal{J}$ satisfying \eqref{defnOfJSelfSim} where for each $i=1,\ldots,s$ we have $\mathcal{O}_i = I$ and $\lambda_i = \frac{1}{s}$. Without loss of generality we can assume that $\diam\mathcal{J}\sim 1$.

Let $\mathcal{J}_0$ be the convex hull of $\mathcal{J}$, and let $\mathcal{J}_{n+1}:=\bigcup_{i=1}^s T_i(\mathcal{J}_n)$, the $n$-th ``partially constructed'' fractal. Then  $\mathcal{J}_n$ can be covered by $O(1)$ copies of a $\delta$-neighbourhood of $\mathcal{J}$ with $\delta=s^{-n}$ and vice versa. It follows that for the purposes of estimating visibility up to a constant, the $\delta=s^{-n}$ neighborhood of $\mathcal{J}$ is equivalent to $\mathcal{J}_n$.

A model example is the ``4-corner Cantor set." Let $\mathcal{K}=\bigcap_{n=1}^\infty \mathcal{K}_n$, where
\begin{equation*}
\begin{split}
\mathcal{K}_n&=K_n+[0,4^{-n}]^2,\\
K_n&=\Big\{(x,y)\in\rr^2:\ x=\sum_{j=1}^n x_j4^{-j},\ y=\sum_{j=1}^n y_j4^{-j},\ x_j,y_j\in\{0,3\}\Big\}.
\end{split}
\end{equation*}
Geometrically, we start with the unit square, divide it into 16 congruent squares of sidelength $1/4$, keep the 4 squares at the corners while discarding the rest, then iterate the procedure inside each of the four surviving squares.
Then $\mathcal{K}_n$ consists of $4^n$ squares of sidelength $\delta=4^{-n}$, and $\mathcal{K}$ is the Cantor set obtained in the limit. The 4-corner set has long been of interest in complex analysis, as an example of a set with positive 1-dimensional length and zero analytic capacity (\cite{garnett}; see also \cite{tolsa} for an overview of this area of research). Projections of $\mathcal{K}$ and $\mathcal{K}_n$ have been studied e.g. in \cite{PS1, NPV, BaV}.

Our upper bounds on the visibility of self-similar sets will be based on a connection between the visibility problem and estimates on \important{Favard length} (defined below). We will exploit this connection both by adapting Favard length methods to the visibility problem and by explicitly bounding quantities arising in visibility estimates by the Favard length of the set.

The linear projection $\pi_\theta:\rr^2\to\rr$ is given by
\begin{equation}\label{pi theta}
\pi_\theta(x,y)=x\cos\theta + y\sin\theta.
\end{equation}
The \important{Favard length} of a set $S$ is the average (with respect to angle) length of its linear projections:
\begin{equation}\label{fav def}
\Fav(S):=   \frac1{\pi}\int_0^\pi |\pi_\theta(S)|\,d\theta  =  \frac1{\pi}\int_0^\pi \int_\rr \chi_{\pi_\theta(S)}(r)\,dr\,d\theta.
\end{equation}
A theorem of Besicovitch \cite{besic} shows that if $S$ is an unrectifiable one-set, then $|\pi_\theta(S)|=0$ for Lebesgue almost all $\theta$. In particular, $\Fav(S)=0$. It follows that
\begin{equation}\label{fav-zero}
\lim_{\delta\to 0}\Fav(S_\delta)=0.
\end{equation}
However, there may exist exceptional directions $\theta$ for which $|\pi_\theta(S)|>0$: this happens e.g. for $\mathcal{K}_n$ and $\theta=\tan^{-1}(1/2)$. It is in fact possible to have a dense set of such directions \cite{Ke}.

In general, little can be said about the rate of decay of $\Fav(S_\delta)$ as $\delta\to 0$. However, in the case of self-similar sets, effective upper bounds were proved recently in a series of papers starting with \cite{NPV} and continuing in \cite{LZ, BV1, BV3, BLV}. The current state of knowledge may be summarized as follows.

\begin{theorem}\label{thm-upper}
Let $\mathcal{J}$ be a 1-dimensional self-similar set defined by homotheties with equal contraction ratios. Then:
\begin{enumerate}[(i)]
\item If $s\leq 4$, we have $\Fav(\mathcal{J}_n)\leq C n^{-p}$ for some $p>0$ \cite{NPV, BV1, BLV}.
\item The same estimate holds if $\mathcal{J}$ is a self-similar product set which is not a line segment, the similarity centers $z_i$ are rational, and $\pi_{\theta_0}(\mathcal{J})>0$ for some $\theta_0$ \cite{LZ}.
\item If $\mathcal{J}$ is a self-similar product set such that the similarity centers $z_i$ form a product set $A\times B$ with $A,B$ rational and $2\leq |A|,|B|\leq 6$, then $\Fav(\mathcal{J}_n)\leq C n^{-p/\log\log n}$ \cite{BLV}.
\item For general self similar sets defined by homotheties with equal contraction ratios, we have $\Fav(\mathcal{J}_n)\leq C e^{-c\sqrt{\log n}}$ \cite{BV3}.
\end{enumerate}
\end{theorem}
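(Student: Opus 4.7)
The plan is to follow the multiplicity framework introduced by Nazarov--Peres--Volberg, then push it under stronger structural assumptions in each case. Let $\mu_n$ denote the natural probability measure on $\mathcal{J}_n$, uniform across each of the $s^n$ level-$n$ cylinders, and let $f_{n,\theta}$ be the density of the pushforward $\pi_\theta\mu_n$. Since $f_{n,\theta}$ is essentially constant at scale $s^{-n}$ with total mass $1$, one has the identity
\[
|\pi_\theta(\mathcal{J}_n)| \sim \frac{1}{\bar m_{n,\theta}},
\]
where $\bar m_{n,\theta}$ denotes the Lebesgue-averaged multiplicity of level-$n$ cylinders whose $\theta$-projection covers a generic point of the support. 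The task reduces to showing $\Fav(\mathcal{J}_n)\sim\int_0^\pi \bar m_{n,\theta}^{-1}\,d\theta$ is small, i.e.\ that for most directions $\theta$, many cylinders project to the same interval.

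Self-similarity of $\mu_n$ factorizes its Fourier transform as $\hat\mu_n(\xi)=\prod_{k=0}^{n-1}p(s^{-k}\xi)$, where $p(\xi)=s^{-1}\sum_{i=1}^s e^{-2\pi i z_i\cdot\xi}$ is the seed trigonometric polynomial. Zeros of the radial restriction $r\mapsto p(r\cos\theta,r\sin\theta)$ correspond to heavy overlap of cylinders in direction $\theta$ at the relevant scale. The NPV strategy is to partition $[0,\pi)$ into a ``bad'' set $B_n$ of directions where $|\hat\mu_n|$ remains large throughout the annulus $\{|\xi|\lesssim s^n\}$ (no overlap, $\bar m_{n,\theta}\sim 1$) and its complement, on which a quantitative overlap gain $\bar m_{n,\theta}\gtrsim K_n$ is enforced. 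One then estimates $\Fav(\mathcal{J}_n)\lesssim |B_n|+K_n^{-1}$. Note that applying Cauchy--Schwarz/Plancherel directly to $\int|\hat\mu_n|^2|\xi|^{-1}d\xi$ gives only a matching \emph{lower} bound on $\Fav$; the upper bound requires combining Fourier energy with a tile-combinatorics argument at each dyadic scale, and this combinatorial step is what actually drives the estimate.

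The case-specific ingredients control $|B_n|$: for (i) with $s\le 4$, the $\le 4$-term polynomial $p$ has a sufficiently sparse zero set on each dyadic annulus that a direct probabilistic argument, originating in NPV for the four-corner set and refined in \cite{BV1, BLV}, yields $|B_n|\lesssim n^{-p}$. For (ii) and (iii), the product-set hypothesis factors $p(\xi_1,\xi_2)=p_A(\xi_1)p_B(\xi_2)$, and rationality confines the zeros of each factor to arithmetic progressions in a common denominator; the \L aba--Zhai analysis (using the auxiliary hypothesis $|\pi_{\theta_0}(\mathcal{J})|>0$ to break arithmetic resonance) and the Bond--\L aba--Volberg combinatorial analysis of $\{0,1\}$-trigonometric polynomials bound $|B_n|$ by $n^{-p}$ and $n^{-p/\log\log n}$ respectively. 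For (iv), where no arithmetic structure is available, Bond--Volberg substitute a Fourier-analytic dichotomy (either $p$ exhibits a uniform gap near a scale or an averaged decay compensates) applied across $e^{c\sqrt{\log n}}$ scales, producing the weaker sub-exponential bound.

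The main obstacle in every case is quantifying how much the zero sets of $p(s^{-k}\xi)$ at different scales $k$ can conspire across $\theta$: if they nearly coincide along many directions, then cylinders at all scales compound their overlap and $\bar m_{n,\theta}$ grows, but if they disagree systematically, $\theta$ lands in $B_n$ and contributes to the Favard length. The amount of arithmetic or analytic structure in $\{z_i\}$ directly controls this conspiracy, explaining the hierarchy of rates: polynomial decay under the strongest structural hypotheses (small $s$ or rational product sets with at least one positive-measure projection), the intermediate $n^{-p/\log\log n}$ for the combinatorially harder small-product-set case, and $e^{-c\sqrt{\log n}}$ for general homothety self-similar sets where all arithmetic rigidity is lost.
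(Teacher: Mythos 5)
You should first be aware that the paper does not prove this statement at all: Theorem \ref{thm-upper} is an explicit summary of the literature, with each item attributed to the papers \cite{NPV, BV1, BLV}, \cite{LZ}, \cite{BLV}, and \cite{BV3} respectively, and the authors introduce it with ``the current state of knowledge may be summarized as follows.'' So there is no in-paper argument to compare yours against; the relevant comparison is with four separate research papers, each of which is devoted essentially entirely to one of these bounds.

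Measured against that, your proposal is a reasonable high-level description of the Nazarov--Peres--Volberg framework and its descendants -- the reduction of $|\pi_\theta(\mathcal{J}_n)|$ to multiplicity/stacking, the factorization $\widehat{\mu}_n(\xi)=\prod_k p(s^{-k}\xi)$, the split of directions into a bad set $B_n$ and a complement with quantitative overlap, and the estimate $\Fav(\mathcal{J}_n)\lesssim |B_n|+K_n^{-1}$ -- and you correctly flag that Cauchy--Schwarz alone only gives the lower bound. But as a proof it has a genuine gap: every case-specific claim about $|B_n|$ is asserted rather than derived, and those assertions are precisely where the entire difficulty of the cited papers lives. For (i) you need the actual analysis of the set of small values of the partial products of $p$ (the ``SSV'' property) together with the combinatorial self-improvement scheme that converts an $L^2$ bound at one scale into stacking at a larger scale; for (ii) and (iii) you need the number-theoretic classification of cyclotomic divisors of integer $\{0,1\}$-polynomials and the tiling-type arguments of \cite{LZ, BLV}, and in (ii) the hypothesis $|\pi_{\theta_0}(\mathcal{J})|>0$ enters through a specific counting argument, not merely to ``break arithmetic resonance''; for (iv) the sub-exponential rate comes from a quantitative trade-off that your one-sentence dichotomy does not reconstruct. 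None of these steps is routine, and the rates $n^{-p}$, $n^{-p/\log\log n}$, $e^{-c\sqrt{\log n}}$ cannot be recovered from the outline as written. If your goal is to cite these results, do as the paper does and reference them; if your goal is to prove them, each item needs the full machinery of its source.
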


All constants and exponents above depend on the set $\mathcal{J}$. In some additional cases, the assumptions may be weakened and/or the results improved; see \cite{BLV} for details. We also note that a quantitative bound for a special class of self-similar sets with rotations is given in \cite{eroglu}.

Lower bounds on $\Fav(\mathcal{J}_n)$ are much easier to prove. A result of Mattila \cite{Mattila} implies the lower bound
\begin{equation}\label{favard-lower}
\Fav(\mathcal{J}_n)\geq Cn^{-1}
\end{equation}
for general 1-dimensional self-similar sets with equal contraction ratios (allowing rotations).
In \cite{BaV}, this was improved to $(C\log n)/n$ for the 4-corner set $\mathcal{K}_n$.

By interchanging the order of integration, $\Fav(S_\delta)$ may be interpreted as the average value of $|P_a(S_\delta)|$ with respect to $a$ on an appropriate curve in $\rr^2$ (see Proposition \ref{average-vis}). In particular, the Favard bounds just mentioned provide bounds on the averages of $P_a(\mathcal{J}_n)$.

Our theorem provides a pointwise bound quantifying the result of \cite{SS}.

\begin{theorem}\label{fav-upper}
Let $\mathcal{J}$ be self-similar set satisfying the Open Set Condition, whose similitudes have no rotations and have equal contraction ratios. Then for all $a\notin \mathcal{J}$,
\begin{equation*}
\vis(a;\mathcal{J}_n)\leq C_1\sqrt{\Fav(\mathcal{J}_{C_2\log n})}.
\end{equation*}
(The constants are allowed to depend on $a$ and $\mathcal{J}$. It will be clear from the proof that if $\mathcal{J}$ is given, and if $a$ ranges over a fixed compact set disjoint from $\mathcal{J}$, then the constants may be chosen uniform for all such $a$.)
\end{theorem}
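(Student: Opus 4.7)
The plan is to exploit the self-similar structure to reduce the question to estimates on linear projections of $\mathcal{J}_m$ for $m$ much coarser than $n$, and then extract a square root via an $L^1$-duality argument. First, I fix an intermediate depth $k = n - m$ large enough that $s^{-k} \le \tfrac{1}{2}\dist(a, \mathcal{J})$, and decompose
\[
\mathcal{J}_n = \bigcup_{|I| = k} T_I(\mathcal{J}_m)
\]
into $s^k$ scaled copies of $\mathcal{J}_m$, each of diameter $s^{-k}$. Because the similitudes $T_i$ have no rotations, $T_I$ is of the form $x \mapsto s^{-k} x + z_I$, and one has the clean identity $P_a \circ T_I = P_{\tilde a_I}$ with $\tilde a_I := T_I^{-1}(a) = s^k(a - z_I)$. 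Moreover $\dist(\tilde a_I, \mathcal{J}) = s^k \dist(a, T_I(\mathcal{J})) \gtrsim s^k \dist(a,\mathcal{J}) \gg \diam \mathcal{J}_m$, so each $\tilde a_I$ is far from $\mathcal{J}_m$ and the radial projection there is well approximated by a linear one. Writing $R_I := \dist(a, T_I(\mathcal{J}))$ and letting $\phi_I$ be the direction perpendicular to the segment from $a$ to the center of the $I$-th piece, this gives
\[
|P_a(T_I(\mathcal{J}_m))| = |P_{\tilde a_I}(\mathcal{J}_m)| \le \frac{C\,|\pi_{\phi_I}(\mathcal{J}_m)|}{s^k R_I}.
\]

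Second, summing these bounds over $I$ yields
\[
\vis(a; \mathcal{J}_n) \lesssim \frac{1}{s^k}\int_{\sss^1}|\pi_\phi(\mathcal{J}_m)|\, d\mu_a(\phi),
\]
where $\mu_a := \sum_{|I|=k} R_I^{-1}\,\delta_{\phi_I}$ is the weighted angular counting measure of the piece directions on the circle. The crux of the proof is then a density estimate on $\mu_a$: its support lies in the set $P_a(\mathcal{J}_{n-m}) + \tfrac{\pi}{2}$, whose Lebesgue measure is $\lesssim \vis(a; \mathcal{J}_{n-m})$; its total mass is $\sum_I R_I^{-1} \lesssim s^k\log(1/\dist(a,\mathcal{J}))$ by the $1$-Ahlfors regularity of $\mathcal{J}$; and, granting that this mass is not excessively concentrated at any single angle, one obtains $d\mu_a \le C(a)\,s^k\,\vis(a;\mathcal{J}_{n-m})^{-1}\,d\phi$.

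Third, plugging this density bound into the integral and using the definition of $\Fav$ gives
\[
\vis(a; \mathcal{J}_n) \lesssim \frac{\Fav(\mathcal{J}_m)}{\vis(a; \mathcal{J}_{n-m})},
\]
and the monotonicity $\vis(a; \mathcal{J}_{n-m}) \ge \vis(a; \mathcal{J}_n)$ (because $\mathcal{J}_n \subset \mathcal{J}_{n-m}$) produces the $L^2$-type inequality $\vis(a; \mathcal{J}_n)^2 \lesssim \Fav(\mathcal{J}_m)$, which is the conclusion of the theorem upon choosing $m = C_2\log n$.

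The main obstacle is the density estimate on $\mu_a$. It is plausible in view of the Open Set Condition and the $1$-Ahlfors regularity of $\mathcal{J}$, but requires ruling out exceptional directions where the $\phi_I$ might cluster---for instance, directions along which $\mathcal{J}$ has a positive linear projection (such as $\tan^{-1}(1/2)$ for the $4$-corner set). Controlling this concentration uniformly in $n$ is what I expect to force the logarithmic scale $m = C_2\log n$ in the conclusion, rather than a finer choice such as $m=n/2$.
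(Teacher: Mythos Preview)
Your linearization step is fine: writing $P_a\circ T_I=P_{\tilde a_I}$ and approximating the radial projection of each piece $T_I(\mathcal J_m)$ by a linear projection $\pi_{\phi_I}$ is correct, and subadditivity gives the sum $\vis(a;\mathcal J_n)\lesssim s^{-k}\sum_I R_I^{-1}|\pi_{\phi_I}(\mathcal J_m)|$. The difficulty, as you yourself flag, is the density bound $d\mu_a\le C(a)\,s^k\,\vis(a;\mathcal J_k)^{-1}\,d\phi$, and this is a genuine gap rather than a technicality. The unrectifiability of $\mathcal J$ gives only $\mu_a(J)\lesssim |J|^{\kappa}s^k$ for an arc $J$ (this is essentially Lemma~\ref{rectEstimateLem} applied to the cone from $a$ of aperture $|J|$), and $\kappa<1$ in general. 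For your bound to hold on an arc of width $|J|$ you would need $\vis(a;\mathcal J_k)\lesssim |J|^{1-\kappa}$; taking $|J|\sim s^{-k}$ forces polynomial decay of visibility in $k$, which is far stronger than anything available (and stronger than what the theorem delivers). Put differently, many level-$k$ pieces can line up along a single direction from $a$, so $\mu_a$ genuinely concentrates, and dividing total mass by the measure of the support does not control pointwise density. There is no obvious reason the bad set $E=\{\phi:|\pi_\phi(\mathcal J_m)|>K^{-1}\}$ should avoid these concentrated directions, so splitting the sum over $E$ and $E^c$ does not rescue the argument either.

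The paper's proof proceeds by an entirely different mechanism and never needs such a density statement. The choice $L\sim\log_s N$ enters not through any analytic estimate on $\mu_a$ but through a combinatorial fact: with overwhelming probability a random word of length $N$ contains \emph{every} word of length $L$ as a contiguous subword (Proposition~\ref{bad words}). Calling a disk $Q'\in\mathcal Q_N$ \emph{generic} if its word has this property, one then argues pointwise in $\theta$. For $\theta$ outside a small bad set $E_L$ (of measure $\lesssim K^{-1}$ by a Chebyshev argument on $\Fav(\mathcal J_L)$), some level-$L$ disk $Q$ is $K$-stacked in the direction $\theta$; since $Q\prec Q'$ for every generic $Q'$, self-similarity (Lemma~\ref{key1}) transfers the stacking to $Q'$. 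The Hardy-Littlewood maximal inequality then bounds the measure of the set of $\theta$ where generic disks are visible, and the non-generic disks are handled trivially. The key point is that the subword trick gives, for each good $\theta$, stacking at \emph{every} generic disk simultaneously; this replaces your attempt to average over directions and sidesteps the concentration problem entirely.
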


The proof is given in Section \ref{upper}. It follows the same rough outline as in \cite{SS}, but we use the methods from the Favard length papers mentioned above to make our estimates effective.

Theorem \ref{fav-upper} should be used in conjunction with Theorem \ref{thm-upper}. For example, for the 4-corner set, Theorem \ref{fav-upper} together with Theorem \ref{thm-upper}(i) implies that
\begin{equation}\label{fav-upper-4c}
\vis(a;\mathcal{K}_n)\leq C (\log n)^{-p/2},
\end{equation}
where $p$ is the same as in Theorem \ref{thm-upper}(i) (in this specific case, by the result of \cite{NPV} we can take any $p<1/6$, with the constant $C=C(p)$ depending on $p$). We also note that the main result of \cite{SS} is more general, allowing 1-dimensional self-similar sets with rotations and not necessarily equal contraction ratios.

Theorem \ref{fav-upper}, as well as the result of \cite{SS}, demonstrate that for self-similar sets, radial projections are ``better behaved" than linear projections. There are many unrectifiable 1-dimensional self-similar sets (e.g. the 4-corner set or the Sierpi\'nski gasket) which project linearly to sets of positive Lebesgue measure in certain directions, so that any results such as \eqref{fav-zero} or Theorem \ref{thm-upper} can only hold in the sense of averages. On the other hand,
\cite{SS} shows that the visibility of the square Cantor set is $0$ from \textit{every} vantage point, and our Theorem \ref{fav-upper} quantifies this. Heuristically, the reason is that radial projections of self-similar sets (even if only from one point) already involve averaging over directions. A similar principle is present in the proof of the lower visibility bound in Proposition \ref{hochman-prop}.


\subsection{Lower bounds on visibility}
\label{intro-lower}

Our next result shows that neighborhoods of discrete unrectifiable sets satisfy visibility lower bounds away from a small exceptional set of vantage points. We will first need several definitions. The following definition is similar to the notion of a $(\delta,\alpha)_2$--set from \cite{Katz}.

\begin{definition}\label{discreteAlphaDim}
Let $\delta>0,0<\alpha\leq 1$ and $C>0$. We say that a set $\mathcal{A}\subset\RR^2$ is \important{an $(\alpha,C,\delta)$--set that is unconcentrated on lines} if the following conditions hold:
\begin{itemize}
 \item $\mathcal{A}$ is a non-empty union of closed $\delta$-balls with at most $C$-fold overlap (i.e. any $x\in \rr^2$ belongs to at most $C$ balls of $\mathcal{A}$).
 \item $C^{-1}\delta^{2-\alpha}\leq |\mathcal{A}|\leq C\delta^{2-\alpha}$.
 \item For every ball $B$ of radius $r$, we have the bound
 \begin{equation}\label{unconcentratedBallEstimate}
 |\mathcal{A}\cap B|\leq C r^\alpha|\mathcal{A}|.
 \end{equation}
 \item For every line $\ell$, we have the bound
  \begin{equation}\label{unconcentratedLineEstimate}
 |\mathcal{A}\cap \ell^{1/C}|\leq |\mathcal{A}|/10,
 \end{equation}
 where $\ell^\rho$ is the $\rho$--neighborhood of $\ell$.
\end{itemize}
\end{definition}

If $\alpha=1$, we will also consider a slightly more specialized type of sets.
\begin{definition}\label{def-sets}
 For $0<\kappa \leq 1/2,$ and $C$ large, we say that $\mathcal{A}$ is a $(\kappa,C,\delta)$--\important{unrectifiable one-set} if $\mathcal{A}$ is a $(1,C,\delta)$--set that is unconcentrated on lines, and if for every rectangle $R$ with dimensions $r_1\leq r_2$, we have
\begin{equation}\label{defnOfOneDimUnrecEqn}
 |\mathcal{A} \cap R |\leq Cr_1^{\kappa}\,r_2^{1-\kappa} |\mathcal{A}|.
\end{equation}
\end{definition}

Note that if $\alpha=1$, then \eqref{defnOfOneDimUnrecEqn} implies \eqref{unconcentratedLineEstimate}, provided that $\mathcal{A}$ is contained in a compact set $K$ (the constant appearing in \eqref{unconcentratedLineEstimate} may depend on $\kappa,C$, and the diameter of $K$).

In applications, the specific values of the constants $\kappa$ and $C$ will not be important. We will also say that $\mathcal{A}$ is equivalent to a $(\alpha,C,\delta)$--set that is unconcentrated on lines if there are $(\alpha,C_i,\delta_i)$--sets $\mathcal{A}_1$, $\mathcal{A}_2$ that are unconcentrated on lines, with $C_i\sim C$ and $\delta_i\sim \delta$ (the $\sim$ notation is explained below), such that $\mathcal{A}_1\subset \mathcal{A}\subset\mathcal{A}_2$. We say that $\mathcal{A}$ is equivalent to a $(\kappa,C,\delta)$--unrectifiable one-set if an analogous property holds. This happens for example for $\mathcal{K}_n$, which obeys all of the above conditions except that it is a union of disjoint squares instead of balls.

If $\mathcal A$ is equivalent to a $(\alpha,C,\delta)$--set that is unconcentrated on lines, we will sometimes abuse the terminology and say simply that $\mathcal{A}$ is a $(\alpha,C,\delta)$--set that is unconcentrated on lines, since for our purposes the distinction is not important. We will adopt a similar convention for sets $\mathcal{A}$ that are equivalent (in the same sense) to $(\kappa,C,\delta)$--unrectifiable one-sets.

\medskip

\noindent{\bf Example 1: Self similar sets.} We prove in Theorem \ref{discrete-unrect}
that if $\mathcal{J}$ is a $\alpha$-dimensional self-similar set (in the sense of Definition \ref{defnOfSelfSim}) with $0<\alpha\leq 1$, satisfying the Open Set Condition and not contained in a line, then its $\delta$-neighbourhood $\mathcal{J}^\delta$ is equivalent to a a $(\alpha,C,\delta)$--set that is unconcentrated on lines, with $C$ independent of $\delta$. Moreover, if $\alpha=1$, then $\mathcal{J}^\delta$ is equivalent to a
$(\kappa,C,\delta)$--unrectifiable one-set for some $C$ and some $\kappa>0$ independent of $\delta$.
It is easy to see from the proof that the same argument extends to modified Cantor constructions that have roughly the same ``distribution of mass" but no exact self-similarities, for example the randomized 4-corner set of \cite{PS1}.

\medskip

\noindent{\bf Example 2: Diffeomorphic images of self similar sets.} In Corollary \ref{diffeoCor} (proved in Section \ref{defnOfFSection} below), we show that diffeomorphic images of $(\kappa,C,\delta)$--unrectifiable one-sets
are equivalent to $(\kappa/2,C',\delta')$--unrectifiable one-sets with $C'\sim C$ and $\delta'\sim\delta$.
In particular, diffeomorphic images of self-similar one-sets provide a rich class of examples.

Similarly in Section \ref{unrectA} we show that if $\alpha<1$, and if $\mathcal{J}$ is an $\alpha$--dimensional self-similar set satisfying the Open Set Condition and $\mathcal{J}$ is not a subset of a line, then diffeomorphic images of $\mathcal{J}^\delta$ are $(\alpha,C,\delta')$--sets that are unconcentrated on lines.
\medskip

Our result is as follows.
\begin{theorem}\label{boundOnHighMultPtsThm}
\textbf{(A):} Let $0<\alpha\leq 1$, and let $U\subset\RR^2$ be a compact set. Let $\mathcal{A}\subset[0,1]^2$ be a $(\alpha,C_0,\delta)$--set that is unconcentrated on lines. For $\lambda \in (0,1],$ we have
\begin{equation}\label{boundOnHighMultPtsCts}
|\{a\in U \colon \vis(a;\mathcal{A}) < \lambda \}|  \leq C_1|\log\delta|^{C_2} \delta^{-2+2\alpha}\lambda^{2}.
\end{equation}
\textbf{(B):} Let $0<\alpha\leq 1$, and let $U\subset\RR^2$ be a compact set. Let $\mathcal{A}\subset[0,1]^2$ be either a $(\alpha,C_0,\delta)$--set that is unconcentrated on lines (for $\alpha<1$) or a $(\kappa,C_0,\delta)$--unrectifiable one-set (if $\alpha=1$). Then there exist constants $\epsilon_0,\epsilon_1>0$ (depending on $\alpha$ and/or $\kappa$) such that for all $\lambda < \delta^{\alpha/2-\epsilon_0},$ we have
\begin{equation}\label{boundOnHighMultPtsCtsEps}
|\{a\in U \colon \vis(a;\mathcal{A}) < \lambda \}|\leq C_3|\log\delta|^{C_4} \delta^{-2+2\alpha}\lambda^{2+\epsilon_1}.
\end{equation}
The constants in the above inequalities depend on $U,$ $\kappa,$ $\alpha$, and $C_0$, but not on $\lambda$ or $\delta$.
\end{theorem}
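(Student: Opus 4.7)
The plan is to prove both parts by a ``Chebyshev-squared'' argument: translate the bad event ``$\vis(a;\mathcal{A})<\lambda$'' into ``many radially aligned pairs of balls of $\mathcal{A}$ from $a$'', then use the unconcentration hypotheses to control the second moment of the pair-count over $a\in U$. Write $E_\lambda=\{a\in U:\vis(a;\mathcal{A})<\lambda\}$, and for $a\in E_\lambda$ let $m_a(\theta)$ count the $\delta$-balls of $\mathcal{A}$ whose radial arcs from $a$ contain $\theta$. After truncating the contribution of balls closer than $\delta$ to $a$ (which costs only an $|\log\delta|$ factor), one has $\int m_a\,d\theta=M(a)\sim\delta^{1-\alpha}$. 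Since $\mathrm{supp}(m_a)$ has measure at most $2\pi\lambda$, Cauchy--Schwarz in $\theta$ gives $\int m_a^2 \geq M(a)^2/(2\pi\lambda)$, and expanding $\int m_a^2=\sum_{B_1,B_2}|P_a(B_1)\cap P_a(B_2)|$ --- where each nonzero summand is $\sim\delta$ for $B_i$ at distance $\sim 1$ from $a$ --- forces the pair-count
\[
N(a):=\#\{(B_1,B_2)\in\mathcal{A}\times\mathcal{A} : P_a(B_1)\cap P_a(B_2)\ne\emptyset\}\;\gtrsim\;\delta^{1-2\alpha}/\lambda
\]
for all $a\in E_\lambda$.

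The heart of Part (A) is the second-moment estimate $\int_U N(a)^2\,da\lesssim|\log\delta|^C\,\delta^{-2\alpha}$, from which Chebyshev applied to $N^2$ yields
\[
|E_\lambda|\;\leq\;\frac{\int_U N(a)^2\,da}{\bigl(\min_{E_\lambda}N\bigr)^2}\;\lesssim\;|\log\delta|^C\,\delta^{-2+2\alpha}\lambda^2,
\]
which is \eqref{boundOnHighMultPtsCts}. I would prove this bound by expanding $N(a)^2$ as a sum over ordered quadruples $(B_1,B_2,B_3,B_4)$ with $B_1\ne B_2$, $B_3\ne B_4$, and grouping the terms by how many balls the two pairs share. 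The dominant case is disjoint pairs: for $(B_1,B_2)$ at distance $d$ and $(B_3,B_4)$ at distance $d'$ whose defining lines meet at angle $\phi$, a direct geometric computation shows that the intersection of the two alignment tubes in $U$ has area $\lesssim\delta^2/(d\,d'\sin\phi)$. Combined with the ball-covering estimate \eqref{unconcentratedBallEstimate} --- which yields $\sum_{\text{pairs}}1/d\lesssim|\log\delta|\,\delta^{-\alpha-1}$ --- and a dyadic decomposition in $\phi$ (whose concentration is controlled by the line-unconcentration \eqref{unconcentratedLineEstimate}), the disjoint-quadruple contribution totals $\delta^2\cdot\delta^{-2\alpha-2}\cdot|\log\delta|^C=|\log\delta|^C\,\delta^{-2\alpha}$; the same-pair and one-shared-ball cases are easily checked to be lower order.

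For Part (B) the improvement $\lambda^2\mapsto\lambda^{2+\epsilon_1}$ comes from sharpening the pigeonhole step behind the lower bound on $N(a)$. That step used only the $L^2$ constraint on $m_a$, not any structural information on how $\mathcal{A}$ distributes along individual lines through $a$. Bourgain's discretized sum--product (equivalently, discretized projection) theorem in \cite{Bourgain1}, applied to the $(\alpha,C_0,\delta)$-set (or the $(\kappa,C,\delta)$-unrectifiable one-set in the case $\alpha=1$), upgrades the trivial line bound $|\mathcal{A}\cap\ell^\delta|\leq|\mathcal{A}|/10$ to $|\mathcal{A}\cap\ell^\delta|\leq C\delta^\eta\,|\mathcal{A}|$ for some $\eta=\eta(\alpha,\kappa)>0$, uniformly over lines with direction outside a small exceptional set. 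Inserting this into the heavy-arc pigeonhole improves $N(a)$ by a factor of $\delta^{-\eta}$ on $E_\lambda$ in the regime $\lambda<\delta^{\alpha/2-\epsilon_0}$ (where the sum-product input is non-degenerate); repeating the second-moment Chebyshev argument with the improved lower bound yields \eqref{boundOnHighMultPtsCtsEps} with $\epsilon_1$ proportional to $\eta$.

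The main obstacle in both parts is the control of $\int_U N(a)^2$ in degenerate configurations --- $a$ close to $\mathcal{A}$ so that $M(a)$ blows up, nearly collinear quadruples that make $\sin\phi$ tiny or force the two alignment lines to almost coincide, or a one-parameter family of pairs concentrated in a common direction. Each of these is handled by careful dyadic decomposition in the relevant scales, combined with the ball- and line-unconcentration hypotheses \eqref{unconcentratedBallEstimate}, \eqref{unconcentratedLineEstimate}, at the price of the logarithmic factors $|\log\delta|^{C_2}$, $|\log\delta|^{C_4}$. For Part (B) there is the further subtle task of extracting from Bourgain's theorem a line-incidence bound that is uniform enough in the line and explicit enough in $\eta$ to be fed back through the entire Chebyshev chain and to yield a quantitative $\epsilon_1$.
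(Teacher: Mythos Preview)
Your strategy for Part (A) --- lower-bound a pair-count $N(a)$ via Cauchy--Schwarz on $m_a$, then bound $\int_U N^2$ by expanding into quadruples and apply Chebyshev --- is morally close to the paper's $L^2$ incidence argument, but there is a real gap in the quadruple estimate. The dangerous contribution to $\int_U N^2$ comes from pairs $(P_1,P_2)$ whose alignment tubes are nearly parallel (small $\phi$), and you claim this is handled by a dyadic decomposition in $\phi$ ``whose concentration is controlled by the line-unconcentration \eqref{unconcentratedLineEstimate}''. But \eqref{unconcentratedLineEstimate} is only a single-scale statement: it bounds $|\mathcal A\cap\ell^{1/C}|$ for tubes of width $\sim 1$, and says nothing about tubes of width $2^{-j}$ or about how many pairs of balls point in a given $2^{-j}$-arc of directions. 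So it cannot control the small-$\phi$ terms. The paper avoids this issue entirely by a bilinear reduction (Lemma \ref{IntervalsAssociatedToPoint}): from each $a$ one finds two angular sectors $\Theta_1,\Theta_2$ at separation $\sim 1$, each carrying a fixed fraction of the mass, pigeonholes a rich-line multiplicity $\mu_i$ in each sector, and then counts triples $(a,\ell_1,\ell_2)$ with $\ell_i\in\mathcal H_i$. Since every such $(\ell_1,\ell_2)$ meets at angle $\sim 1$, the degenerate near-parallel case simply never appears. Your approach can be repaired by inserting the same bilinear step (replace $N(a)^2$ by $N_{\Theta_1}(a)\cdot N_{\Theta_2}(a)$), at which point it becomes essentially the paper's argument rephrased.

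For Part (B) the proposed mechanism is wrong. Bourgain's discretized projection theorem does not give a line-incidence bound of the form $|\mathcal A\cap\ell^\delta|\le C\delta^\eta|\mathcal A|$; indeed for $\alpha=1$ such a bound already follows from the rectangle hypothesis \eqref{defnOfOneDimUnrecEqn} without invoking Bourgain at all, and it is not what drives the $\lambda^{\epsilon_1}$ gain. The paper's route is quite different. One first proves a one-dimensional lemma (Lemma \ref{notTooManyPtsOnLineThm}): on any fixed line $\ell_0$, the set of low-visibility points has length $\lessapprox \delta^{\alpha-1}\lambda^{1+\epsilon_1}$. This is done by contradiction: if that set were large, one shows it supports a $(\delta,\tilde\kappa/2,\tau)_1$--well-distributed measure (using Lemmas \ref{ConeAssociatedToAPointLem}--\ref{ConeAssociatedToAPointSmallAlphaLem}), then applies a projective transformation sending $\ell_0$ to the line at infinity. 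Radial projection from a point $a\in\ell_0$ becomes linear projection in a direction $\theta_a$, so the image $T(\mathcal A)$ would have projection of size $\lesssim\lambda<\delta^{\alpha/2-\epsilon_0}$ in every direction of a well-distributed set --- and this is exactly what Bourgain's theorem (Theorem \ref{BourgainsDiscreteSumProdThm}) forbids. Finally, this 1D bound is fed back into the bilinear triple count: for each $\ell_1\in\mathcal H_1$, the number of $a\in\tilde G$ on $\ell_1$ is $\lessapprox\delta^{\alpha-2}\lambda^{1+\epsilon_1}$, and each such $a$ sees $\lesssim\delta^{-1}\lambda$ lines of $\mathcal H_2$, which sharpens the upper bound on triples and yields the extra $\lambda^{\epsilon_1}$. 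Your suggestion of improving the pointwise lower bound on $N(a)$ via a line-incidence estimate does not reproduce this gain.
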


Theorem \ref{boundOnHighMultPtsThm} is proved in Section \ref{lower}. The estimate \eqref{boundOnHighMultPtsCts}  is based on $L^2$ estimates in incidence geometry. The improvement in \eqref{boundOnHighMultPtsCtsEps} relies on Bourgain's discretized Marstrand projection theorem \cite{Bourgain1}.

Theorem \ref{boundOnHighMultPtsThm} is best understood in the context of specific examples. Let $\mathcal{K}$ be the the 4-corner Cantor set defined in Section \ref{sec-upper-selfsim}.
Let $\mathcal{K}^\prime$ be a 4-corner set in polar coordinates, i.e. the image of $\mathcal{K}$ under the mapping
\begin{equation*}
\phi:\ (x,y)\to ((x+1)\cos \pi y,\ (x+1)\sin \pi y).
\end{equation*}
Since $\phi$ is a diffeomorphism on a neighbourhood of $[0,1]^2$, by Corollary \ref{diffeoCor} (proved in Section \ref{defnOfFSection} below) we have that $\mathcal{K}^\prime_n:=\phi(\mathcal{K}_n)$ is a $(\kappa,C,4^{-n})$--unrectifiable one-set.

By Proposition \ref{hochman-prop}, for every $a\in \rr^2$, $P_a(\mathcal{K})$ has Hausdorff dimension 1. Since Hausdorff dimension provides a lower bound for the box dimension, we have
\begin{equation*}
\vis(a,\mathcal{K}_n)\geq C(a,\epsilon) 4^{-n\epsilon}
\end{equation*}
for every $a\in\rr^2$ and $\epsilon>0$. Pointwise, this is much stronger than Theorem \ref{boundOnHighMultPtsThm}, except for the uniformity in $a$.

Consider now $\mathcal{K}'$. Since $\mathcal{K}'$ is not self-similar, Proposition \ref{hochman-prop} does not apply. Indeed, the conclusion of Proposition \ref{hochman-prop} does not hold for $\mathcal{K}'$, because we have an exceptional point at the origin from which $\mathcal{K}'$ is visible in a set of directions of dimension $1/2$. At this point we have
\begin{equation*}
\vis(0,\mathcal{K}'_n)= 2^{-n}.
\end{equation*}
 It is possible for the set of such exceptional points to be infinite. Indeed, $\mathcal{K}$ has a dense set of directions $\theta$ such that $\pi_\theta(\mathcal{K})$ has Hausdorff and box dimension less than 1, corresponding to exact overlaps between two or more projected squares at some stage of the iteration. Hence, if we let $\mathcal{K}''$ be the image of $\mathcal{K}$ under a projective transformation which maps the ``line at infinity" to a line $\ell_0$ in the plane (cf. Section \ref{sec-projective}), then there is a dense countable set of points on $\ell_0$ from
which $\mathcal{K}''$ is visible in a set of directions of dimension less than 1. For such points, we have
\begin{equation*}
\vis(a,\mathcal{K}''_n)\leq C(a) 4^{-\beta n},\ \beta=\beta(a)>0,
\end{equation*}
with both the constant and the exponent depending on $a$. Theorem \ref{boundOnHighMultPtsThm} gives an upper bound on the measure of the set of such points if $C$ and $\alpha$ are given. (See also Lemma \ref{notTooManyPtsOnLineThm}, where we estimate the measure of the set of exceptional points on a given line.) It seems difficult to determine the actual size of the exceptional set on finite scales, and it is possible that this set could in fact be much smaller than Theorem  \ref{boundOnHighMultPtsThm} allows. On the other hand, improving the estimate in Lemma \ref{notTooManyPtsOnLineThm} cannot be easy, since it would be equivalent (via the machinery of Section \ref{lower}) to improving Bourgain's discretized sum-product theorem.

Theorem \ref{boundOnHighMultPtsThm} can fail in the absence of the unrectifiability condition. If $\mathcal{A}$ is a line segment, say $\mathcal{A}=[-2,-1]\times\{0\}$ and $U=[0,1]\times[-1,1]$, then the set $\{a\in U \colon \vis(a;\mathcal{A}) < \lambda \}$ is an angular segment of width about $\lambda$ and has area $\sim \lambda$, which for small $\lambda$ is worse than the bound $\lambda^2$ in Theorem \ref{boundOnHighMultPtsThm} (A).
On the other hand, if we consider the visibility of such sets from a set $U'$ of vantage points such that $\mathcal{A}$ is unconcentrated on any lines that intersect both $\mathcal{A}$ and $U'$ (e.g. $U'=[0,1]\times [1,2]$ in the above example), then the same result applies with the same proof.

\subsection{Acknowledgements}
The authors would like to thank Michael Hochman for permission to include the argument in Section 2.4. The second author would also like to thank Michael Hochman, Pablo Shmerkin and Boris Solomyak for helpful conversations. We are grateful to the anonymous referee for many comments that helped improve this paper.

The first two authors were supported by the NSERC Discovery Grant RGPIN/229818-2012. The first author is an NSF Postdoctoral Fellow. The third author was supported in part by the Department of Defense through the National Defense Science \& Engineering Graduate Fellowship (NDSEG) Program.

\section{Warm-up results}\label{general}

\subsection{Notation}

Throughout this paper, we will work with a small parameter $\delta>0,$ and we will study the behaviour of various quantities as $\delta\to 0$. All constants and exponents will be independent of $\delta$ unless specified otherwise. We will use $A\lesssim B$ or $A = O(B)$ to mean that $A<CB$ for some absolute constant $C$ which may vary for each instance of the $\lesssim$ notation, but remains independent of $\delta$.
We will also use $A\sim B$ to mean that $A\lesssim B$ and $B\lesssim A$.
We will write $A \lessapprox B$ if $A\lesssim |\log\delta|^M B$, where again $M>0$ may vary from line to line, but remains independent of $\delta$. We will say $A\approx B$ if $A\lessapprox B$ and $B\lessapprox A$.

In the particular context of self-similar sets with uniform contraction ratios, we will have $\delta=s^{-n}$, where $s=O(1)$ is fixed and $n$ is large. Thus for example $A\lesssim B$ means that $A\leq CB$ for some $C$ independent of $n$, and  $A\lessapprox B$ means that $A\lesssim n^M B$, for some $M>0$.

We will use $|S|$ to denote the 1- or 2-dimensional Lebesgue measure of a set $S$, or the cardinality of $S$, depending on context. The $\alpha$-dimensional Hausdorff measure will be denoted by $H^\alpha$. We will write $B(a,r)=\{x\in\rr^2:\ |x-a|\leq r\}$. We also use $\chi_S$ to denote the characteristic function of $S$, and $S^\delta=\bigcup_{x\in S}B(x,\delta)$ for the $\delta$-neighbourhood of $S$.

We will frequently deal with subsets of $S^1$, which we will identify with $[0,2\pi)$. Under this identification, the interval $(a,b)$ will correspond to the circular arc $\{(\cos\theta,\sin\theta)\colon a<\theta<b \}.$ Note that this is well defined even if $a$ or $b$ lies outside the interval $[0,2\pi)$, so sometimes we will allow this to occur. Note also that under this identification, if $a\in[0,2\pi)$ then $a$ and $a+\pi$ are antipodal.

If $\mu$ is a measure on $X$ and $f:X\to Y,$ we define the {pushforward} measure $f \mu$ by $f \mu(E)=\mu(f^{-1}(E))$. If $\nu$ is another measure on $X$, we write $\mu\ll\nu$ to mean $\mu$ {is absolutely continuous with respect to} $\nu$.

\subsection{Visibility and Favard length}

We first note that up to constants, the Favard length of a set can be interpreted as its average visibility from a suitably chosen set of vantage points. For example, we have the following.

\begin{proposition}\label{average-vis}
Suppose $S$ is contained in a right triangle $T'$ with corners $(0,0)$, $(0,R)$, $(R,0)$. Let $I_V$ be the line segment from $(-R,-R)$ to $(-R,2R)$, and let $I_H$ be the line segment from $(-R,-R)$ to $(2R,-R)$. Then
\begin{equation}\label{h1}
\Fav(S)\sim\int_{I_V\cup I_H} \vis(a;S)\, da,
\end{equation}
where the integral in $a$ is taken with respect to the one-dimensional Lebesgue measure.
\end{proposition}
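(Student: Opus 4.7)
The plan is to prove both inequalities of $\sim$ by applying Fubini and a change of variables between the $(a,\theta)$-parametrization of rays and the $(p,\alpha)$-parametrization of lines. First I would write
\begin{equation*}
\int_{I_V\cup I_H}\vis(a;S)\,da \;=\; \frac{1}{2\pi}\int_{I_V\cup I_H}\!\int_{S^1}\chi_{\{\text{ray from }a\text{ in dir.\ }\theta\text{ hits }S\}}(a,\theta)\,d\theta\,da,
\end{equation*}
and compute the Jacobian of the map $(a,\theta)\mapsto(\alpha,p)$, where $\alpha=\theta\mod\pi$ and $p$ is the signed perpendicular distance from the origin to the line through $a$ with direction $\theta$. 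Parametrizing $a=(-R,t)$ on $I_V$ gives $p=R\sin\theta+t\cos\theta$ and Jacobian $|\cos\theta|$, so $dt\,d\theta = dp\,d\alpha/|\cos\alpha|$. Symmetrically, on $I_H$ with $a=(s,-R)$ one gets $ds\,d\theta = dp\,d\alpha/|\sin\alpha|$. Since a line meeting both $S$ and $I_V\cup I_H$ can cross the L-shape in at most two points, the map $(a,\theta)\mapsto \ell$ is at most $O(1)$-to-$1$.

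For the upper bound, I would exploit the following elementary observation: since $S\subset T'\subset[0,R]^2$ and $a=(-R,t)$ with $t\in[-R,2R]$, the direction $\theta$ from $a$ to any point $(x_0,y_0)\in S$ satisfies $\tan\theta=(y_0-t)/(x_0+R)$ with $x_0+R\in[R,2R]$ and $y_0-t\in[-2R,2R]$, giving $|\tan\theta|\le 2$ and hence $|\cos\theta|\gtrsim 1$. Thus the only $\alpha$ contributing to the integral over $I_V$ satisfy $|\cos\alpha|\gtrsim 1$, and the potentially dangerous $1/|\cos\alpha|$ factor is harmless. Then $\int_p \chi dp = |\pi_{\alpha+\pi/2}(S)|$, so
\begin{equation*}
\int_{I_V}\vis(a;S)\,da \;\lesssim\; \int_{S^1}|\pi_{\alpha+\pi/2}(S)|\,d\alpha \;\sim\; \Fav(S),
\end{equation*}
and the analogous estimate on $I_H$ (using $|\sin\theta|\gtrsim 1$ there) completes the upper bound.

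For the lower bound, I would show that every line $\ell$ hitting $S$ intersects $I_V\cup I_H$. Splitting $\alpha$ according to whether $|\cos\alpha|\ge|\sin\alpha|$ or the reverse, one checks via explicit geometry that in the first case $\ell$ crosses $I_V$ in its prescribed range $[-R,2R]$, and in the second case crosses $I_H$ in its range; this uses crucially that $T'$ lies in the upper-right quadrant and $I_V\cup I_H$ forms an L enclosing it from below-left. Picking such an intersection point $a(\ell)$ and the corresponding direction $\theta(\ell)$ pointing from $a$ to $S$ gives an injective (modulo $O(1)$ multiplicity) map from lines hitting $S$ into $(I_V\cup I_H)\times S^1$-configurations contributing to $\vis$. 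Reversing the Jacobian computation, with $|\cos\alpha|$ or $|\sin\alpha|$ bounded below by $1/\sqrt 2$ in the respective halves, yields $\Fav(S)\lesssim \int_{I_V\cup I_H}\vis(a;S)\,da$.

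The main technical step—more tedious than genuinely hard—is the geometric case analysis guaranteeing that for every $(\alpha,p)$ with $\ell_{\alpha,p}\cap S\neq\emptyset$, the chosen intersection point lies in the specified segment $[-R,2R]$; this is where the particular constants $-R$ and $2R$ defining $I_V$ and $I_H$ are used, and the choice is optimized so that the two halves cover all directions with a uniform Jacobian bound.
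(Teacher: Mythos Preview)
Your approach is essentially the same as the paper's: both apply Fubini and then convert between arclength on $I_V$ (resp.\ $I_H$) and projected length via the factor $|\cos\theta|$ (resp.\ $|\sin\theta|$). The paper writes this as the identity $|\pi_{\theta+\pi/2}(S)| = |X_\theta\cap I_V|\cos\theta + |X_\theta\cap I_H|\sin\theta$ (with equality for $\theta\in[-\pi/4,3\pi/4]$), which is your Jacobian relation in disguise; your angle restrictions $|\cos\theta|\gtrsim 1$ on $I_V$ and $|\sin\theta|\gtrsim 1$ on $I_H$ are exactly the paper's observations that $\cos\theta>1/10$ when $X_\theta\cap I_V\neq\emptyset$ and $\sin\theta>1/10$ when $X_\theta\cap I_H\neq\emptyset$.

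One small correction to your lower-bound case split: it is \emph{not} true that every line with $|\cos\alpha|\geq|\sin\alpha|$ meeting $T'$ crosses $I_V$ within $[-R,2R]$. For instance, the line through the corner $(R,0)$ with slope slightly less than $1$ crosses $x=-R$ near $y=-2R$, outside $I_V$. What is true (and what the paper uses) is that for $\theta\in[-\pi/4,3\pi/4]$ the line meets $I_V\cup I_H$; in the example just given, the crossing with $I_H$ lies at $x\approx 0$. So you should not split the two segments but treat them jointly, after which the bound $|X_\theta|\geq |X_\theta\cap I_V|\cos\theta + |X_\theta\cap I_H|\sin\theta = |\pi_{\theta+\pi/2}(S)|$ follows since $\cos\theta,\sin\theta\leq 1$ (and one checks, as the paper does, that the signs work out on the relevant subranges). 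With that adjustment your argument goes through and coincides with the paper's.
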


\begin{proof}
We have
\begin{equation}\label{favard-triangle}
\begin{split}
\int_{I_V\cup I_H} \vis(a;S)\, da
&=\int_{I_V\cup I_H} \int_{-\pi}^{\pi} \chi_{P_{a}(S)}(\theta)d\theta da\\
&=\int_{-\pi}^{\pi}|X_\theta| d\theta,
\end{split}
\end{equation}
where $X_\theta=\{a\in I_V\cup I_H:\ \theta\in P_a(S)\}$ is the set of points of $I_V\cup I_H$ from which $S$ is visible at angle $\theta$. It suffices to show that
\begin{equation}\label{fav-equi1}
|X_\theta| \leq 10 |\pi_{\theta+\frac{\pi}{2}}(S)| \hbox{ for }-\pi \leq\theta\leq\pi,
\end{equation}
\begin{equation}\label{fav-equi2}
|X_\theta| \geq |\pi_{\theta+\frac{\pi}{2}}(S)| \hbox{ for }-\frac{\pi}{4}\leq\theta \leq \frac{3\pi}{4}.
\end{equation}
Indeed, the full range of angles at which $S$ is visible from $I_V\cup I_H$  is $-\tan^{-1}2\leq\theta\leq \pi- \tan^{-1}\frac{1}{2}$; for other $\theta$, we have $X_\theta=\emptyset$.
By elementary geometry, we have
\begin{equation}\label{fav-equi3}
|\pi_{\theta+\frac{\pi}{2}}(S)|\geq |X_\theta\cap I_V|\cos\theta +|X_\theta\cap I_H|\sin\theta,
\end{equation}
and moreover, if $-\frac{\pi}{4}\leq\theta\leq \frac{3\pi}{4}$, then the equality in (\ref{fav-equi3}) holds
(since none of $S$ is projected outside of $I_V\cup I_H$).
This immediately implies (\ref{fav-equi2}), since $\sin\theta$ and $\cos\theta$ are bounded by 1.
Furthermore, if $X_\theta\cap I_H$ is non-empty, we must have $ \tan^{-1}\frac{1}{2}\leq\theta\leq \pi- \tan^{-1}\frac{1}{2}$, and in that range of $\theta$ we have $\sin\theta\geq
\sin( \tan^{-1}\frac{1}{2})>\frac{1}{10}$. Similarly, if $X_\theta\cap I_V\neq\emptyset$, we must have $-\tan^{-1}2\leq\theta\leq \tan^{-1}2$, and in that range
we have $\cos\theta>\frac{1}{10}$. This together with (\ref{fav-equi3}) implies (\ref{fav-equi1}).
\end{proof}

The key property of the line segments $I_V$ and $I_H$ is that for every point $x\in S$ and every angle $\theta\in[0,\pi)$, the line passing through $x$ pointing in direction $\theta$ intersects the set $I_V\cup I_H$ at an angle comparable to 1. We could replace the set $I_H\cup I_V$ with other rectifiable curves that have this property: for example, a similar result holds if $S$ is contained in the ball
$B(0,\frac{1}{2})$ and $I_V\cup I_H$ is replaced by the circle $|x|=1$.


\subsection{Energy methods}
For a compact set $S\subset\rr^d$, let $\mathcal{M}(S)$ denote the set of all non-negative Radon probability measures supported on $S$. The (Riesz) $s$-energy of $\mu\in\mathcal{M}(S)$ is given by
\begin{equation*}
I_s(\mu):=\int\int |x-y|^{-s}d\mu(x)\,d\mu(y).
\end{equation*}
We will require the following characterization of the Hausdorff dimension of $S$ (see \cite[Theorems 8.8 and 8.9]{mattila-book} or \cite[Propositions 8.2 and 8.4]{wolff-lectures}):
\begin{align*}
\dim(S)&
=\sup\{s>0:\  \exists\ \mu\in\mathcal{M}(S)\text{ such that }I_s(\mu)<\infty\}\\
&=\sup\{s>0:\  \exists\ \mu\in\mathcal{M}(S)\text{ such that }\mu(B(x,r))\lesssim r^s\text{ for all }x\in\rr^d,r>0\}.
\end{align*}
(By convention, if the sets on the right are empty, we will consider their suprema to be 0; however, the results below are only of interest if $\dim(S)>0$.)

The following is a visibility analogue of a well known result of Kaufman \cite{kaufman}. We will use $\ell_{x,y}$ to denote the line through $x$ and $y$, and $\ell_{x,y}^\rho$ to denote the $\rho$--neighbourhood of $\ell_{x,y}$.

\begin{theorem}\label{kauf-vis}
Let $S\subset\RR^2$ be measurable, and consider a set of vantage points $V\subset\RR^2$ equipped with a measure $\nu\in\mathcal{M}(V)$.
Assume that $\dist (V,S)\gtrsim 1$, and that for some $\beta>0$ we have
\begin{equation}\label{e-unconcentrated}
\nu(\ell_{x,y}^\rho)\lesssim\rho^\beta \hbox{ for all }\rho>0\hbox{ and }x,y\in S,\ x\neq y.
\end{equation}
Then for all $s<\min\{\beta,\dim S\}$ we have
\begin{equation*}
\nu\{a:\dim(P_a(S))< s\}=0.
\end{equation*}
\end{theorem}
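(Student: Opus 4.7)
The plan is to adapt Kaufman's energy-method proof \cite{kaufman} by replacing the linear projection $\pi_\theta$ with the radial projection $P_a$, and replacing the uniform angular measure with $\nu$. The whole argument hinges on a single geometric estimate that translates ``$P_a(x)$ and $P_a(y)$ are close'' into ``$a$ lies near the line $\ell_{x,y}$,'' after which the unconcentration hypothesis \eqref{e-unconcentrated} plays exactly the role played by the trivial bound on Lebesgue measure of intervals in the classical proof.

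First I would fix $s<\min\{\beta,\dim S\}$ and use the energy characterization of Hausdorff dimension to pick $\mu\in\mathcal{M}(S)$ with $I_s(\mu)<\infty$. Then I would establish the key lemma:
\begin{equation}\label{eq-vis-lower}
|P_a(x)-P_a(y)|\gtrsim \dist(a,\ell_{x,y})\,|x-y|\qquad\text{for all }a\in V,\ x,y\in S,
\end{equation}
with the implied constant depending only on $\dist(V,S)$ and the diameter of $V\cup S$ (we may assume $V$ is bounded, as the measure $\nu$ is finite and we can truncate). The estimate \eqref{eq-vis-lower} is a direct computation: if $\theta$ denotes the angle at $a$ subtended by the segment $xy$, then $\sin\theta=\dist(a,\ell_{x,y})\,|x-y|/(|x-a|\,|y-a|)$, while $|P_a(x)-P_a(y)|=2\sin(\theta/2)\geq\sin\theta$; the hypothesis $\dist(V,S)\gtrsim 1$ bounds the denominator from above.

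Combining \eqref{eq-vis-lower} with the unconcentration hypothesis gives, for every $\rho>0$,
\begin{equation*}
\nu\bigl\{a\in V:|P_a(x)-P_a(y)|\leq\rho\bigr\}\leq\nu\bigl(\ell_{x,y}^{C\rho/|x-y|}\bigr)\lesssim \bigl(\rho/|x-y|\bigr)^\beta.
\end{equation*}
A layer-cake integration then yields, for any $x\neq y\in S$,
\begin{equation*}
\int_V |P_a(x)-P_a(y)|^{-s}\,d\nu(a)\lesssim \frac{|x-y|^{-s}}{\beta-s},
\end{equation*}
where the finiteness at $\rho=0$ uses precisely that $s<\beta$, and the contribution from $\rho\gtrsim 1$ is harmless because $\nu$ is finite. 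Here is where the assumption $s<\beta$ is consumed.

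Fubini's theorem now gives
\begin{equation*}
\int_V I_s(P_a\mu)\,d\nu(a)=\iint\left(\int_V |P_a(x)-P_a(y)|^{-s}\,d\nu(a)\right)d\mu(x)\,d\mu(y)\lesssim I_s(\mu)<\infty,
\end{equation*}
so $I_s(P_a\mu)<\infty$ for $\nu$-almost every $a\in V$. For such $a$, the energy characterization forces $\dim(\supp P_a\mu)\geq s$, and since $\supp P_a\mu\subset P_a(S)$ this gives $\dim P_a(S)\geq s$ off a $\nu$-null set $N_s$. Applying this to a sequence $s_n\nearrow\min\{\beta,\dim S\}$ and taking the union $\bigcup_n N_{s_n}$ (still $\nu$-null) completes the proof.

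The main obstacle is really only the geometric estimate \eqref{eq-vis-lower}: it is the correct replacement for the triviality $|\pi_\theta(x)-\pi_\theta(y)|\leq|x-y|$ used in Kaufman's argument, and it is what allows the hypothesis on $\nu$ along lines through pairs of points of $S$ to be converted into a bound on the level sets of $a\mapsto|P_a(x)-P_a(y)|$. Once this is in hand, the rest of the proof is essentially bookkeeping.
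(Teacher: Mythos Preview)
Your proposal is correct and follows essentially the same route as the paper: pick a Frostman measure $\mu$ on $S$, establish the geometric estimate $|P_a(x)-P_a(y)|\gtrsim|x-y|\,\dist(a,\ell_{x,y})$ (the paper's Lemma~\ref{freddie}), and then use the unconcentration hypothesis together with Fubini and a layer-cake argument to bound $\int I_s(P_a\mu)\,d\nu(a)$ by a constant times $I_s(\mu)$. Your area-formula derivation of \eqref{eq-vis-lower} is in fact cleaner than the paper's three-case analysis in Lemma~\ref{freddie}, and the final countable-union step you include is harmless (the theorem as stated is for a fixed $s$).
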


The conclusion of the theorem obviously fails if both $S$ and $V$ lie on the same straight line. The assumption (\ref{e-unconcentrated}) excludes pathological cases of this type.
In particular, if $S\subseteq T'$ (the triangle from Proposition \ref{average-vis}), then (\ref{e-unconcentrated}) holds with $\beta=1$ if $\nu$ is the Hausdorff measure on $I_V\times I_H$. Moreover, if $V$ is compact and $\nu = H^\alpha$ for some $\alpha>1$, then (\ref{e-unconcentrated}) holds with $\beta=\alpha-1$.

The proof uses a simple geometric lemma.

\begin{lemma}\label{freddie}
Suppose $x\neq y$ and $|a-x|\sim|a-y|\sim 1$. Then
\begin{equation}\label{PaxPayComp}
|P_a(x)-P_a(y)|\gtrsim |x-y|\dist(a,\ell_{x,y}).
\end{equation}
Here $|P_a(x)-P_a(y)|$ denotes the arc-length of the interval of $S^1$ with endpoints $P_a(x)$ and $P_a(y)$.
\end{lemma}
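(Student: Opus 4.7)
The plan is to prove the lemma by computing the area of triangle $axy$ in two different ways and relating the resulting expression for the angle at $a$ to the arc-length on $S^1$.

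First I would observe that, since $P_a(x)$ and $P_a(y)$ are unit vectors pointing from $a$ towards $x$ and $y$ respectively, the arc-length $|P_a(x)-P_a(y)|$ on $S^1$ is precisely equal to the angle $\theta := \angle xay$ in the triangle with vertices $a$, $x$, $y$ (taking the value in $[0,\pi]$).

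Next, I would compute the area of this triangle in two ways. Using base $|x-y|$ and the corresponding height $\dist(a,\ell_{x,y})$, the area equals $\tfrac{1}{2}|x-y|\dist(a,\ell_{x,y})$. Using the two sides $|a-x|$ and $|a-y|$ and the included angle, the area also equals $\tfrac{1}{2}|a-x||a-y|\sin\theta$. Equating these and using the hypothesis $|a-x|\sim|a-y|\sim 1$ yields
\begin{equation*}
\sin\theta \;=\; \frac{|x-y|\dist(a,\ell_{x,y})}{|a-x||a-y|} \;\sim\; |x-y|\dist(a,\ell_{x,y}).
\end{equation*}

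Finally, I would invoke the elementary inequality $\theta \geq \sin\theta$ for $\theta\in[0,\pi]$, which gives
\begin{equation*}
|P_a(x)-P_a(y)| \;=\; \theta \;\geq\; \sin\theta \;\gtrsim\; |x-y|\dist(a,\ell_{x,y}),
\end{equation*}
as required. There is no substantive obstacle here: the bounds $|a-x|,|a-y|\sim 1$ guarantee that the sides of the triangle adjacent to $a$ are of unit order, so no degenerate cases arise, and the comparison between arc-length and sine is immediate. The only mild subtlety worth noting explicitly in the write-up is that the quantity $|P_a(x)-P_a(y)|$ in the statement is interpreted as arc-length (as clarified in the lemma), which is exactly why the passage from $\sin\theta$ to $\theta$ costs no extra factor.
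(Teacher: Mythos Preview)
Your proof is correct and considerably cleaner than the one in the paper. The paper argues by letting $b$ be the foot of the perpendicular from $a$ to $\ell_{x,y}$ and then splitting into three cases according to whether $b$ lies between $x$ and $y$, and if not, whether the resulting angles are large or small; each case is handled by a separate trigonometric computation (direct angle decomposition, the law of sines, and an estimate on differences of sines, respectively). Your area-of-triangle identity $\tfrac12|x-y|\dist(a,\ell_{x,y})=\tfrac12|a-x|\,|a-y|\sin\theta$ replaces all of this case analysis in one line, and the passage from $\sin\theta$ to $\theta$ via $\theta\ge\sin\theta$ on $[0,\pi]$ is exactly the right finishing step. The paper's approach has no real advantage here; your argument is both shorter and more transparent, and it makes the dependence on the hypotheses $|a-x|\sim|a-y|\sim 1$ completely explicit.
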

\begin{proof}
Let $\gamma\in[0,\pi)$ be the angle between the half-lines from $a$ to $x$ and $y$, so that $\gamma=|P_a(x)-P_a(y)|$. We will always assume that $\gamma\leq\pi/10$, since otherwise there is nothing to prove. Let $b$ be the orthogonal projection of $a$ on the line $\ell_{x,y}$; in particular, $|b-a|=\dist(a,\ell_{x,y})$. Let $\gamma_x$ be the angle between the half-lines $xa$ and $xb$, and define $\gamma_y$ similarly. First consider the case where $b$ lies in the interval $xy$, so that $\gamma=\pi- \gamma_x-\gamma_y$. Since $\gamma\leq\pi/10$ and $\gamma_x,\gamma_y\leq\pi/2$, we have $\gamma_x,\gamma_y\geq 2\pi/5.$ We can now bound
\begin{align*}
\gamma&=(\frac{\pi}{2}-\gamma_x)+ (\frac{\pi}{2}-\gamma_y)\gtrsim \tan(\frac{\pi}{2}-\gamma_x)+ \tan(\frac{\pi}{2}-\gamma_y)\\
&=\frac{|b-x|}{|a-b|}+\frac{|b-y|}{|a-b|}=\frac{|x-y|}{|a-b|}\gtrsim |x-y|\,|a-b|,
\end{align*}
which establishes \eqref{PaxPayComp} in this case. The last inequality follows from the observation that $|a-b|\lesssim 1$.

Now suppose that $b$ lies outside of the interval $xy$.  Then (re-labeling $x$ and $y$ if necessary) we have that $\gamma=\gamma_x-\gamma_y$. In particular, the triangle spanned by $a,x,$ and $y$ is obtuse, and the vertex $x$ has the largest angle. Call this angle $\beta$. First consider the case where $\gamma_x>\pi/10$, so $\pi/2\leq\beta\leq 9\pi/10$. By the law of sines, we have
\begin{equation*}
\frac{\gamma}{|x-y|}\geq\frac{\sin\gamma}{|x-y|} =\frac{\sin\beta}{|a-y|}\gtrsim 1\gtrsim |a-b|,
\end{equation*}
which establishes \eqref{PaxPayComp}.

Finally, consider the case where $\gamma_x\leq\pi/10$. Then $\beta\geq 9\pi/10,$ so in particular we have $|x-y|\sim\big| |a-x|-|a-y|\big|$. Since $\frac{\sin u-\sin v}{u-v}\leq 1$ for $u\neq v$, we have

\begin{align*}
\gamma_x-\gamma_y&\geq\sin\gamma_x-\sin\gamma_y=\left|\frac{|a-b|}{|a-x|}-\frac{|a-b|}{|a-y|}\right|\\
&\sim |a-b|\, \big||a-y|-|a-x|\big|\sim |x-y|\,|a-b|
\end{align*}
as claimed.
\end{proof}

\begin{proof}[Proof of Theorem \ref{kauf-vis}.]
Let $s<\min\{\beta,\dim S\}$, and let $\mu\in\mathcal{M}(S)$ such that $I_s(\mu)<\infty$.
It suffices to prove that $I_s(P_{a}\mu)<\infty$ for $\nu$-a.e. $a$. This follows when we prove that $I:=\int I_s(P_{a}\mu)\,d\nu(a)<\infty$. We have

\begin{align*}
I&=\iiint |x-y|^{-s}\,dP_{a}\mu(x)\,dP_{a}\mu(y)\,d\nu(a)\\
&=\iiint |P_a(x)-P_a(y)|^{-s}\,d\mu(x)\,d\mu(y)\,d\nu(a)\\
&\lesssim\iiint (\dist(a,\ell_{x,y}))^{-s}|x-y|^{-s}d\nu(a)\,d\mu(x)\,d\mu(y)\\
&=\iiint_0^\infty \nu(\{a:\dist(a,\ell_{x,y})\leq r^{-1/s}\})\,dr\,|x-y|^{-s}\,d\mu(x)\,d\mu(y).
\end{align*}
On the third line, we used Lemma \ref{freddie}.
By (\ref{e-unconcentrated}),
\begin{align*}
I&\lesssim\iint\big[1+\int_1^\infty r^{-\beta/s}\,dr\big]|x-y|^{-s}\,d\mu(x)\,d\mu(y)\\
&\lesssim I_s(\mu)<\infty.\qedhere
\end{align*}
\end{proof}

The next theorem is an analogue of  \cite[Section 9.10]{mattila-book}, with $\gamma$ equal to the 1-dimensional Lebesgue measure on $I_V\cup I_H$.
It does not seem to generalize well to other vantage sets $V$. We omit the details.

\begin{theorem}
Assume that $S\subset T'$ is compact, with $T'$ as in Proposition \ref{average-vis}. Let $\mu\in\mathcal{M}(S)$. Then
\begin{equation}\label{applepie1}
\int_{I_V\cup I_H} |P_a(S)|^{-1}\, da \lesssim I_1(\mu),
\end{equation}
\begin{equation}\label{applepie2}
I_1(\mu)^{-1}\lesssim \int_{I_V\cup I_H} |P_a(S)|\, da.
\end{equation}
\end{theorem}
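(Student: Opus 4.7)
The plan is to prove \eqref{applepie1} first, via a Cauchy--Schwarz step that reduces matters to an $L^2$ density estimate, and then deduce \eqref{applepie2} from \eqref{applepie1} by another Cauchy--Schwarz. For \eqref{applepie1}, the starting point is the pointwise bound
\[
1 = (P_a\mu)(S^1) = (P_a\mu)(P_a(S)) \leq |P_a(S)|^{1/2}\,\|P_a\mu\|_{L^2(S^1)},
\]
valid whenever $P_a\mu$ has a density; this gives $|P_a(S)|^{-1}\le \|P_a\mu\|_{L^2(S^1)}^2$. Integrating over $a\in I_V\cup I_H$, the problem reduces to showing
\[
\int_{I_V\cup I_H} \|P_a\mu\|_{L^2(S^1)}^2\, da \lesssim I_1(\mu).
\]

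To handle this rigorously, I would mollify: choose a symmetric $\psi_\eps$ on $S^1$ approximating the identity, set $\Psi_\eps=\psi_\eps*\psi_\eps$, and write
\[
\|P_a\mu * \psi_\eps\|_2^2 = \iint \Psi_\eps\!\bigl(P_a(x)-P_a(y)\bigr)\,d\mu(x)\,d\mu(y).
\]
By Fubini the left-hand integral over $a$ equals $\iint \bigl[\int_{I_V\cup I_H}\Psi_\eps(P_a(x)-P_a(y))\,da\bigr]\,d\mu(x)\,d\mu(y)$. For fixed $x\ne y$ in $T'$, $a\in I_V\cup I_H$ satisfies $|a-x|\sim|a-y|\sim 1$, so Lemma \ref{freddie} yields $|P_a(x)-P_a(y)|\gtrsim|x-y|\,\dist(a,\ell_{x,y})$. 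Combined with the geometric observation following Proposition \ref{average-vis}, the line $\ell_{x,y}$ meets $I_V\cup I_H$ transversally with angle $\sim 1$, so the set $\{a\in I_V\cup I_H:\dist(a,\ell_{x,y})\le t\}$ has $1$-dimensional Lebesgue measure $\lesssim t$. A change of variables therefore gives $\int_{I_V\cup I_H}\Psi_\eps(P_a(x)-P_a(y))\,da \lesssim |x-y|^{-1}$ uniformly in $\eps$. Taking $\eps\to 0$ via Fatou and monotone convergence recovers
\[
\int_{I_V\cup I_H} \|P_a\mu\|_{L^2(S^1)}^2\, da \lesssim \iint |x-y|^{-1}\,d\mu(x)\,d\mu(y) = I_1(\mu),
\]
proving \eqref{applepie1}.

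For \eqref{applepie2}, I apply Cauchy--Schwarz in the form
\[
|I_V\cup I_H|^2 = \left(\int_{I_V\cup I_H}\!\!|P_a(S)|^{1/2}\cdot|P_a(S)|^{-1/2}\,da\right)^{\!2}\! \le \left(\int_{I_V\cup I_H}\!\!|P_a(S)|\,da\right)\!\left(\int_{I_V\cup I_H}\!\!|P_a(S)|^{-1}\,da\right).
\]
Since $|I_V\cup I_H|\sim R\sim 1$, inserting the bound \eqref{applepie1} on the second factor yields $\int_{I_V\cup I_H}|P_a(S)|\,da \gtrsim I_1(\mu)^{-1}$.

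The main technical obstacle is justifying the distributional Fubini calculation, i.e.\ exchanging $\int_{I_V\cup I_H}$ with $\iint d\mu\,d\mu$ while controlling what is essentially a delta-function kernel; the mollification step above is exactly what is needed, but it hinges on the pointwise inequality in Lemma \ref{freddie} and on the uniform transversality between $\ell_{x,y}$ and $I_V\cup I_H$ for all $x,y\in T'$. The reason this argument does not generalize well (as the authors remark) is that this uniform transversality is a very special feature of the $L$-shaped curve built from $I_V$ and $I_H$: for a general vantage curve one loses the $\dist(a,\ell_{x,y})^{-1}$ pointwise bound along tangential directions, and the resulting integral in $a$ can blow up faster than $|x-y|^{-1}$, breaking the comparison with $I_1(\mu)$.
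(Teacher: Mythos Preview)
Your proof is correct and follows exactly the route the paper indicates: the authors omit the details and simply note that the theorem is the radial-projection analogue of \cite[Section 9.10]{mattila-book}, and your argument---Cauchy--Schwarz to reduce $|P_a(S)|^{-1}$ to $\|P_a\mu\|_{L^2}^2$, then an $L^2$ energy computation via Lemma \ref{freddie} and the transversality of $\ell_{x,y}$ with $I_V\cup I_H$, then a second Cauchy--Schwarz for \eqref{applepie2}---is precisely that analogue. Your closing remark about why the argument fails for general vantage curves also matches the paper's observation that the result ``does not seem to generalize well to other vantage sets $V$.''
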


Together with Proposition \ref{average-vis}, (\ref{applepie2}) recovers Mattila's lower bound $\Fav(S)\gtrsim I_1(\mu)^{-1}$.
In particular, if $\mu_n$ is the normalized Lebesgue measure on $\mathcal{K}_n$, then a computation similar to that in Lemma \ref{BoundOn2NormOfF} shows that $I_1(\mu_n)\sim n$. It follows that\footnote{The bound \eqref{simpleFavCantorBound} is strengthened to $\text{Fav}(\mathcal{K}_n)\gtrsim\frac{\log n}{n}$ in \cite{BaV}, but we will not need this improvement.}
\begin{equation}\label{simpleFavCantorBound}
\operatorname{Fav}(\mathcal{K}_n)\gtrsim\frac1{n}.
\end{equation}
By Chebyshev's inequality and (\ref{applepie1}), we have that for all $\lambda>0$,
\begin{equation}\label{applepiedeluxe}
\begin{split}
|\{a\in I_V\cup I_H:|P_a({K}_n)|\leq\lambda\}|&\leq\lambda\int_{I_V\cup I_H} |P_a(\mathcal{K}_n)|^{-1}\, da\\
&\lesssim \lambda\ I_1(\mu_n)\\
&\lesssim \lambda n.
\end{split}
\end{equation}
The bound \eqref{applepiedeluxe} should be compared to Lemma \ref{notTooManyPtsOnLineThm}, where under some additional assumptions of $\lambda$ and the set $S$, the RHS of \eqref{applepiedeluxe} is improved to $\lambda^{1+\epsilon}n^C;$ here $\epsilon>0$ is a small constant, and $C$ is a large constant. If $\lambda$ is much smaller than $n^{-1},$ then this is indeed a better bound.


\subsection{Visibility dimension of self-similar sets}

The following argument is due to Michael Hochman and we thank him for permission to include it here. It is very similar to the proof of Theorem 1.7 in \cite{HS2}.

\begin{proposition}\label{hochman-prop}
Let $\mathcal{J}\subseteq\mathbb{R}^{2}$ be a self-similar set satisfying the Strong Separation Condition, and
satisfying \eqref{defnOfJSelfSim} with no rotations (i.e. $\mathcal{O}_i = I$ for each $i=1,\ldots,s$).
Then for any $a\in\mathbb{R}^{2}\setminus\mathcal{J}$ we have
$\dim P_a(\mathcal{J})=\min\{1,\dim \mathcal{J}\}$.
\end{proposition}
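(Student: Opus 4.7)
Plan: The upper bound $\dim P_a(\mathcal{J}) \leq \min\{1, \dim\mathcal{J}\}$ is immediate, since $P_a$ is Lipschitz on the compact set $\mathcal{J}$ (because $a\notin\mathcal{J}$) and the target is $\sss^1$. The content is the matching lower bound. Set $\alpha:=\dim\mathcal{J}$ and let $\mu$ be the canonical self-similar probability measure on $\mathcal{J}$ with weights $\lambda_i^{\alpha}$; under the Strong Separation Condition, $\dim\mu=\alpha$. The decisive ingredient, which depends crucially on the absence of rotations, is the conjugation identity
\[
P_a\circ T_i = P_{a_i},\qquad a_i := T_i^{-1}(a) = \lambda_i^{-1}(a-z_i).
\]
Since $T_i(\mathcal{J})\subseteq\mathcal{J}$ and $a\notin\mathcal{J}$, every iterated ``virtual vantage point'' $a_{\mathbf i}:=T_{\mathbf i}^{-1}(a)$ stays outside $\mathcal{J}$, and $P_a(\mathcal{J}) \supseteq P_{a_{\mathbf i}}(\mathcal{J})$ for every finite word $\mathbf i$.

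Choose $\mathbf i_n$ with $R_n:=|a_{\mathbf i_n}|\sim\lambda_{\mathbf i_n}^{-1}\to\infty$, and let $\theta_n$ denote the direction of $a_{\mathbf i_n}$. A Taylor expansion of $P_{a'}(x)=(x-a')/|x-a'|$ in powers of $1/|a'|$ gives, uniformly in $x\in\mathcal{J}$,
\[
P_{a_{\mathbf i_n}}(x) \;=\; c_n \;+\; R_n^{-1}\,\pi_{\theta_n^\perp}(x) \;+\; O(R_n^{-2}),
\]
where $\sss^1$ is viewed locally as $\rr/2\pi\zz$ near $c_n$. Thus on scales $\gg R_n^{-1}$ the radial projection from $a_{\mathbf i_n}$ is bi-Lipschitz equivalent (after rescaling by $R_n$) to the linear projection $\pi_{\theta_n^\perp}$, and the self-similarity of $\mathcal{J}$ propagates the comparison to all smaller scales by zooming into deeper pieces $T_{\mathbf j}(\mathcal{J})$. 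Combined with Hochman's theorem \cite{hochman2} --- namely $\dim\pi_\theta(\mathcal{J})=\min\{1,\alpha\}$ for \emph{every} $\theta$ under SSC with no rotations --- this yields
\[
\dim P_a(\mathcal{J}) \;\geq\; \dim P_{a_{\mathbf i_n}}(\mathcal{J}) \;=\; \dim\pi_{\theta_n^\perp}(\mathcal{J}) \;=\; \min\{1,\alpha\}.
\]

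The main technical obstacle is the scale-transfer step: Hausdorff dimension is not preserved under $O(R_n^{-1})$-close perturbations of a projection in general, so the Taylor comparison above does not automatically equate the dimensions of the two projected sets. The cleanest way to overcome this --- and presumably the one Hochman has in mind --- is to work with the pushforward measure $\nu_a:=P_a\mu$, which satisfies the self-similar--type recursion $\nu_a=\sum_i\lambda_i^\alpha\,\nu_{a_i}$, and to apply the CP-distribution / local entropy averages machinery of \cite{HS2} (in the spirit of their Theorem~1.7) directly to $\nu_a$. This yields $\dim\nu_a=\min\{1,\alpha\}$ without passing through a limit of sets, and $\dim P_a(\mathcal{J}) \geq \dim\nu_a$ completes the proof.
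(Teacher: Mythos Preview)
Your conjugation identity $P_a\circ T_i = P_{a_i}$ with $a_i = T_i^{-1}(a)$ is correct and yields the useful inclusion $P_a(\mathcal{J}) \supseteq P_{a_{\mathbf i}}(\mathcal{J})$. However, the argument has a genuine gap at the point where you invoke \cite{hochman2}: the claim that $\dim\pi_\theta(\mathcal{J})=\min\{1,\alpha\}$ holds for \emph{every} $\theta$ under SSC with no rotations is false. What Hochman proves is that this holds for all $\theta$ outside an exceptional set $X\subset S^1$ of Hausdorff dimension zero, and $X$ need not be empty. For the $4$-corner set $\mathcal K$ there is in fact a dense countable set of directions---those producing exact overlaps of projected cylinders---for which $\dim\pi_\theta(\mathcal K)<1$ (see the discussion in Section~\ref{intro-lower}). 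So even if your scale-transfer issue were resolved, the limiting direction $\theta_n^\perp$ could land in $X$, and the chain of inequalities breaks. Your fallback paragraph does not close this gap either: the recursion $\nu_a=\sum_i\lambda_i^\alpha\nu_{a_i}$ involves \emph{different} vantage points, so $\nu_a$ is not a self-similar measure and no theorem of \cite{HS2} applies to it ``directly'' in the way you suggest.

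The paper's argument confronts the exceptional set head-on via a bootstrap. The main tool is \cite[Theorem~1.13]{HS}, which for a $C^2$ map $f$ with $\nabla f\neq 0$ gives
\[
\underline{\dim}\, f\mu \;\geq\; \essinf_{b\in\mathcal{J}} \dim\pi_{\theta(f;b)}\mu,
\]
where $\theta(f;b)$ is the direction of $\nabla f(b)$. Applied with $f=P_a$, this reduces the problem to showing that $g\mu(X)=0$, where $g(b)=\theta(P_a;b)$. Since $\dim X=0$, it suffices that $\underline{\dim}\,g\mu>0$. But $g$ is again $C^2$ with nonvanishing gradient, so one applies \cite[Theorem~1.13]{HS} a \emph{second} time, to $g$, obtaining $\underline{\dim}\,g\mu\geq\essinf_b\dim\pi_{\theta(g;b)}\mu$. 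The right-hand side is bounded below by the elementary fact (Lemma~\ref{easy}) that $\dim\pi_\theta(\mathcal{J})\geq\alpha_0>0$ for \emph{all} $\theta$, with some small $\alpha_0$ depending only on $\mathcal{J}$. Your approach contains the right intuition---linearize and compare to $\pi_\theta$---but misses this two-step bootstrap, which is precisely the mechanism needed to rule out the exceptional directions.
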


The assumption that $a \notin\mathcal{J}$ guarantees that $P_a$ is well defined (and $C^2$ as required below) on all of $\mathcal{J}$. However, if $a \in\mathcal{J}$, we may apply Proposition \ref{hochman-prop} to one of the sets $T_i(\mathcal{J})\subset \mathcal{J}$ from \eqref{defnOfJSelfSim}, and the conclusion follows again.

While the proof itself is short, it relies on major results from \cite{hochman2}, \cite{HS}, and on the machinery developed therein. We present a heuristic argument first, with the rigorous proof to follow.

\begin{proof}[Heuristic proof.]
Let $f:\mathbb{R}^{2}\rightarrow\mathbb{R}$ be differentiable at $b\in\rr^2$, and assume that $\nabla f(b)\neq 0$.
Then for $z=(x,y)$ in a small neighbourhood of $b$, we may approximate $f(z)$ by $f(b)+(Df)_b(z-b)$, where
$$(Df)_b(z)=\nabla f(b)\cdot z = |\nabla f(b)|\pi_\theta(z)$$
is a linear mapping from $\rr^2$ to $\rr$, and $\theta=\theta(f;b)$ is the angle that $\nabla f(b)$ makes with the positive $x$-axis.

Our intended application is to the visibility problem.
Let $a \notin\mathcal{J}$; without loss of generality, we may assume that $a=0$. Let $f=P_0:\rr^2\setminus \{0\}\to\rr/2\pi\zz$ (we identify the latter with $S^1$). For $b=(r\cos\phi,r\sin\phi)$ with $r>0$, we have $\theta(P_0;b)=\phi+\pi/2$, so that $\pi_{\theta(P_0;b)}$ is the orthogonal projection to a line perpendicular to the line through $0$ and $b$.

The idea is to ``linearize" the problem: near each $b\in\mathcal{J}$, we may approximate the radial projection $P_0$ by the linear projection $\pi_{\theta(P_0;b)}$. By self-similarity, arbitrarily small neighbourhoods of every $b\in\mathcal{J}$ contain complete affine copies of $\mathcal{J}$. Therefore the dimension of $P_0(\mathcal{J})$ is bounded from below by the supremum of the dimensions of the corresponding linear projections of such copies. (This is a vast oversimplification; the rigorous version of this argument is given by Theorem 1.13 of \cite{HS}.)

The theorem will now follow if we can find a point $b\in\mathcal{J}$ such that $\dim \pi_{\theta(P_0;b)}(\mathcal{J})=\min (1,\dim \mathcal{J})$.
By Theorem 1.8 of \cite{hochman2}, we have
\begin{equation}\label{h-e1}
\dim \pi_\theta(\mathcal{J})=\min (1,\dim \mathcal{J})
\end{equation}
for all $\theta\notin X$, where $X\subset S^1$ is an exceptional set of dimension 0.
Suppose that we know {\it a priori} that $P_0(\mathcal{J})$ has positive dimension. Then the set $\Omega:=\{\theta(P_0;b):\ b\in\mathcal{J}\}$ also has positive dimension, in particular it cannot be entirely contained in $X$. It follows that (\ref{h-e1}) holds for some $\theta\in\Omega$, hence the conclusion follows as claimed.

To complete the argument, we need to bootstrap. We have to prove that $\dim\Omega>0$. This requires another application of \cite[Theorem 1.13]{HS}, this time linearizing the mapping $g(b)=\theta(P_0;b)$. With notation as above, we have $\nabla g(b)=r^{-1}(-\sin\phi,\cos\phi)$, so that $\theta(g;b)=\phi+\pi/2=\theta(f;b)$. However, now we only need to prove that the dimension of $g(\mathcal{J})$ is positive, not necessarily maximal, so that it suffices to show that there is an $\alpha>0$ such that $\dim \pi_\theta(\mathcal{J})>\alpha$ for all $\theta$. But this is easy to prove, see Lemma \ref{easy}.
\end{proof}

 We now present the rigorous argument for more general mappings.

\begin{proposition}\label{hochman-prop-b}
Let $\mathcal{J}\subseteq\mathbb{R}^{2}$ be a self-similar set defined by homotheties
(i.e. $\mathcal{O}_i = I$ for each $i=1,\ldots,s$), and satisfying the Strong Separation Condition. Let
$\mu$ be the self-similar measure on $\mathcal{J}$ achieving the Hausdorff
dimension.
Suppose that $f:\mathbb{R}^{2}\rightarrow\mathbb{R}$ is a $C^{2}$
map such that the mapping $g:\rr^2\to\rr$ given by $g(b)=\theta(f;b)$ is well defined and obeys $\nabla g(b)\neq0$ except for a $\mu$-null set of points.
Then
\begin{equation}\label{h-e2}
\dim f\mu=\min(1,\dim \mathcal{J})
\end{equation}
\end{proposition}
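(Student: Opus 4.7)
The plan is a two-step application of the Hochman--Shmerkin transference principle for $C^2$ projections of self-similar measures \cite[Theorem 1.13]{HS}, which rigorously realizes the local linearization in the heuristic proof above. That result asserts, roughly, that if $h:\R^2\to\R$ is $C^2$ with non-vanishing gradient on $\supp\mu$, and if $\mu$ is self-similar with the Strong Separation Condition and no rotations, then $\dim h\mu \geq \dim\pi_{\theta(h;b)}\mu$ for any $b\in\supp\mu$. Combined with Hochman's linear projection theorem \cite[Theorem 1.8]{hochman2}, which gives $\dim\pi_\theta\mu=\min(1,\dim\mu)$ for all $\theta\in S^1$ outside a zero-dimensional exceptional set $X$, it will suffice to exhibit a single $b_0\in\supp\mu$ at which $\theta(f;b_0)\notin X$. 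Hence the argument reduces to confirming that the set of candidate directions $g(\supp\mu)$ has positive Hausdorff dimension.

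For this bootstrap, I would apply \cite[Theorem 1.13]{HS} to $g$ itself: the hypothesis $\nabla g\neq 0$ $\mu$-a.e. is exactly what is needed, and together with a uniform lower bound $\dim\pi_\theta\mu\geq \alpha>0$ valid for every $\theta\in S^1$ (which is the content of Lemma \ref{easy}) this produces
\begin{equation*}
\dim g\mu \;\geq\; \sup_{b\in\supp\mu}\dim\pi_{\theta(g;b)}\mu \;\geq\; \alpha \;>\; 0.
\end{equation*}
In particular $g(\supp\mu)$ has positive dimension, so it cannot lie inside the zero-dimensional set $X$. Choosing $b_0\in\supp\mu$ with $\theta_0:=\theta(f;b_0)\notin X$ and invoking \cite[Theorem 1.13]{HS} a second time, now with $h=f$, yields $\dim f\mu \geq \dim\pi_{\theta_0}\mu = \min(1,\dim\mu)$. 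The matching upper bound is immediate: $f$ is $C^2$ hence Lipschitz on the compact set $\supp\mu$, so $\dim f\mu \leq \dim\mu$, and $\dim f\mu\leq 1$ trivially. Since $\dim\mu=\dim\mathcal{J}$, this establishes \eqref{h-e2}.

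The main obstacle is conceptual rather than computational: it is the careful invocation of \cite[Theorem 1.13]{HS}. That theorem packages the ``self-similarity produces arbitrarily small affine copies of $\mathcal{J}$, on which $f$ is well-approximated by the linear projection $\pi_{\theta(f;b)}$'' argument into a clean statement about Hausdorff dimensions of pushforward measures, and we must check that all our hypotheses (SSC, no rotations, $C^2$ regularity, and $\nabla f,\nabla g\neq 0$ on the supports) are in the form the theorem requires. The one delicate point is the bootstrap itself: Lemma \ref{easy} must deliver a positive lower bound on $\dim\pi_\theta\mu$ that is genuinely uniform in $\theta$, otherwise the supremum in the display above could be zero and the entire reduction would collapse.
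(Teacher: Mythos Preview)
Your approach is essentially the same as the paper's: a two-step bootstrap invoking \cite[Theorem 1.13]{HS} first for $g$ (using Lemma \ref{easy} to get $\dim g\mu>0$) and then for $f$ (using \cite[Theorem 1.8]{hochman2} and the fact that the exceptional set $X$ is zero-dimensional). One small correction: the form of \cite[Theorem 1.13]{HS} used in the paper gives $\underline{\dim}\, h\mu \geq \essinf_{b\in\mathcal{J}} \dim\pi_{\theta(h;b)}\mu$, not the pointwise bound ``for any $b\in\supp\mu$'' you state; accordingly, the paper deduces from $\underline{\dim}\, g\mu>0$ that $(g\mu)(X)=0$, hence $\theta(f;b)\notin X$ for $\mu$-a.e.\ $b$, and then applies the $\essinf$ inequality to $f$ --- rather than selecting a single good $b_0$ as you do. Your displayed inequality with a $\sup$ should therefore be an $\essinf$, but this does not affect the conclusion since Lemma \ref{easy} gives a uniform lower bound over all directions.
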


\begin{proof}[Proof of Proposition \ref{hochman-prop}]
We may assume that $\mathcal{J}\subseteq[0,1]^{2}$.
In this proof only, we will use freely the notation and terminology from \cite{hochman1} and \cite{HS}.

By \cite[Section 4.3]{hochman1}, there is an ergodic CP-distribution
$P$ (see \cite[Section 1.4]{hochman1} for a definition)
such that for a.e. realization $\nu$ of $P$ we have $\mu\ll S\nu$
for some homothety $S$. Let
\begin{equation*}
d(\theta)=\int\dim\pi_{\theta}(\nu)\, dP(\nu).
\end{equation*}
By Theorem 1.22 of \cite{hochman1} (see also \cite[Theorem 1.10]{HS}), for every angle
$\theta$ the following holds: for $P$-a.e. realization of $\nu$ we have that $\pi_{\theta}\nu$ is exact dimensional and $\dim\pi_{\theta}\nu=d(\theta)$, so that $\pi_{\theta}\mu$ is also exact dimensional and $\dim\pi_{\theta}\mu=d(\theta)$.
Then by \cite[Theorem 1.13]{HS}, we have
\begin{equation}
\underline{\dim} f\mu\geq\essinf_{b\in\mathcal{J}}\dim \pi_{\theta(f;b)}\mu,\label{eq:essinf-bound}
\end{equation}
where the essential infimum is taken with respect to $\mu$, and $\underline{\dim}\sigma=\inf\{\dim F\,:\,\sigma(F)>0\}$.

In light of (\ref{h-e1}),  the proof of the proposition reduces now to showing that
\begin{equation*}
\mu(x\in\mathbb{R}^{2}\,:\,\theta(f;x)\in X)=0,
\end{equation*}
or equivalently, that
\begin{equation*}
(g\mu)(X)=0.
\end{equation*}
This will follow if we show that $\underline{\dim} g\mu>0$.
Applying  \cite[Theorem 1.13]{HS} as in (\ref{eq:essinf-bound}) again, but this time to $g$ instead of $f$, we get
\begin{equation*}
\underline{\dim} g\mu\geq\essinf_{b\in\mathcal{J}}\dim\pi_{\theta(g;b)}\mu,
\end{equation*}
which is well defined since $Dg\neq0$ $\mu$-a.e. By Lemma \ref{easy}, we have $\dim\pi_{\theta}\mu\geq \alpha>0$ for all $\theta$.
Therefore $\underline{\dim} g\mu\geq\alpha>0$, as desired.
\end{proof}


\section{Visibility lower bounds}\label{lower}
In this section we will prove Theorem \ref{boundOnHighMultPtsThm}. We will begin with a brief sketch that illustrates the main ideas in the proof.

We begin with Theorem \ref{boundOnHighMultPtsThm}A, which is essentially an incidence result that uses $L^2$/Cauchy-Schwartz type techniques.
Assume that $\mathcal{A}$ is a $(\alpha,C_0,\delta)$--set that is unconcentrated on lines. Let $\mathcal{G}$ be a set of vantage points from which $\mathcal{A}$ has small visibility, in the sense that $\vis(a,\mathcal A)<\lambda$ for $a\in \mathcal{G}$. We may assume that both $\mathcal{G}$ and $\mathcal{A}$ are contained in a ball of radius $\lesssim 1$. We discretize the problem, replacing $\mathcal{G}$ and $\mathcal{A}$ by their maximal $\delta$-separated subsets $G$ and $A$ respectively. Then $|A|\sim \delta^{-2}|\mathcal{A}|\sim \delta^{-\alpha}$, and $|G|\gtrsim\delta^{-2}|\mathcal{G}|$.

Let $a\in G$. The small visibility bound means that $A$ is contained in about $\delta^{-1}\lambda$ rectangles with dimensions about $1\times\delta$ passing through $a$. Let $\mu$ be the typical number of points of $A$ contained in such rectangles; we will also assume that $\mu$ is the same for all points $a\in G$. This can be achieved via dyadic pigeonholing, modulo logarithmic factors that we will ignore in this informal sketch.
The number of such ``rich" rectangles through each $a\in G$ is about $|A|\mu^{-1}=\delta^{-\alpha}\mu^{-1}$. Note that this must be no greater than $\delta^{-1}\lambda$, so that
\begin{equation}\label{sketch-e1}
\mu\gtrapprox \delta^{1-\alpha}\lambda^{-1}.
\end{equation}
Consider the set $\mathcal{T}$ of all triples  $(a,\ell_1,\ell_2)$, where $a\in G$ and  $\ell_1,\ell_2$ are rich rectangles through $a$. The total number of such triples should be about
\begin{equation}\label{sketch-e2}
|\mathcal{T}|\sim |G|(\delta^{-\alpha}\mu^{-1})^2=|G|\delta^{-2\alpha}\mu^{-2}.
\end{equation}
On the other hand,
an $L^2$ argument based on the size and distribution of $A$ shows that the total number of $1\times\delta$ rectangles containing $\mu$ points of $A$ is at most $\delta^{-1-\alpha}\mu^{-2}$
(see (\ref{upperBoundOnH1})).
Assume that any two rich rectangles intersect at an angle $\sim 1$. (This is actually false as stated; instead, we will rely on a ``bilinear" reduction from Section \ref{sec-bilinear}, choosing two subfamilies $\mathcal{H}_1$, $\mathcal{H}_2$ of rich rectangles so that any $\ell_1\in \mathcal{H}_1$ and $\ell_2\in\mathcal{H}_2$ intersect at an angle $\sim 1$.) Then given a pair $\ell_1,\ell_2$ of rich rectangles, there can be only a bounded number of points $a\in G$ contained in their intersection. Thus
\begin{equation}\label{sketch-e3}
|\mathcal{T}|\lessapprox (\delta^{-1-\alpha}\mu^{-2})^2=\delta^{-2-2\alpha}\mu^{-4}.
\end{equation}
Comparing this to (\ref{sketch-e2}), and using also (\ref{sketch-e1}), we get that $|G|\lessapprox \delta^{-2}\mu^{-2}\lessapprox \delta^{-4+2\alpha}\lambda^2$, so that $| \mathcal{G}| \lessapprox\delta^{-4+2\alpha}\lambda^2$
as claimed.

The proof of Theorem \ref{boundOnHighMultPtsThm}B relies on an improvement to (\ref{sketch-e3}). Namely, we will prove that under the assumptions of the theorem, for a $1\times\delta$ rectangle $\ell_1$ we have
\begin{equation}\label{sketch-e4}
|G\cap \ell_1|\lessapprox \delta^{\alpha-2}\lambda^{1+\epsilon_1}
\end{equation}
for some $\epsilon_1>0$. (This is the discretized version of (\ref{boundOnHighMultPtsCtsEpsOneD}).) Recall that each $a\in G$ is contained in at most $\delta^{-1}\lambda$ rich rectangles. Thus, the number of triples $(a,\ell_1,\ell_2)$ in $\mathcal{T}$ with $\ell_1$ fixed is at most $ \delta^{\alpha-2}\lambda^{1+\epsilon_1}\delta^{-1}\lambda= \delta^{\alpha-3}\lambda^{2+\epsilon_1}$. Recalling also the bound $\delta^{-1-\alpha}\mu^{-2}$ on the number of rich rectangles, we can improve (\ref{sketch-e3}) to
$$
|\mathcal{T}|\lessapprox \delta^{-1-\alpha}\mu^{-2} \delta^{\alpha-3}\lambda^{2+\epsilon_1}= \delta^{-4}\mu^{-2} \lambda^{2+\epsilon_1}.
$$
Comparing this to  (\ref{sketch-e2}) as above, we get the desired bound $|G|\lessapprox \delta^{-4+2\alpha}\lambda^{2+\epsilon_1}$.

Suppose for a contradiction that (\ref{sketch-e4}) fails for some rectangle $\ell_1$. Thus $\ell_1$ contains at least $ \delta^{\alpha-2}\lambda^{1+\epsilon_1}$ points
$a\in G$, each of them meeting at least $\delta^{-1}\lambda$ rich rectangles
We may further reduce to the case when the union of these rich rectangles covers $\mathcal{A}$. Abusing notation slightly, we identify the rectangle $\ell_1$ with the line containing its long axis, and apply a projective transformation that sends this line to the line at infinity. Let $\mathcal{A}^\prime$ be the image of $\mathcal{A}$ after this projective transformation, and let $\Theta$ be the image of $G\cap \ell_1$. We conclude that for each $\theta\in\Theta$, the projection of $\mathcal{A}^\prime$ in the direction $\theta$ has size at most $\delta^{1/2-\epsilon_0}=|\mathcal{A}|^{1/2-\epsilon_0}$. Furthermore, the set of directions $\Theta$ does not concentrate too much on small intervals (we refer to this property as being ``well distributed"). However, Bourgain's discretized sum-product theorem does not allow this to happen. This contradiction establishes the theorem.
Note that the condition $\lambda<\delta^{1/2-\epsilon_0}$ is not needed for the above reductions, but it is a key part of Bourgain's theorem.

%
\subsection{Initial reductions and discretization}
We now turn to the proof of Theorem \ref{boundOnHighMultPtsThm}.
We may assume that $U\subset B(0,d)$ for some fixed $d\sim 1$. All constants in the sequel may depend on $d$, but we will not display that dependence. \subsubsection{Discretization of the points}
First, we will need a discretized analogue of $(\alpha,C,\delta)$--sets that are unconcentrated on lines and $(\kappa,C,\delta)$--unrectifiable one-sets.

\begin{definition}
Let $A\subset\RR^2$ be a finite set of points.  We say that $A$ is a \important{discrete $(\alpha,C,\delta)$--set that is unconcentrated on lines} if the following conditions hold:
 \begin{itemize}
 \item $A$ is $\delta$-separated, in the sense that if $a,a'\in A$ and $a\neq a'$, then $|a-a'|\geq\delta$ (in particular, we have $|A\cap B|\leq C$ for any $\delta$--ball $B$).
 \item $C^{-1}\delta^{-\alpha}\leq |A|\leq C\delta^{-\alpha}$.
 \item For every ball $B$ of radius $r\geq\delta$, we have the bound
\begin{equation}\label{unconcentratedBallEstimateDiscrete}
|A\cap B|\leq C r^\alpha|A|.
\end{equation}
\item For every line $\ell$, we have the bound
\begin{equation}\label{unconcentratedLineEstimateDiscrete}
|A\cap \ell^{1/C}|\leq |A|/10.
\end{equation}
\end{itemize}
\end{definition}
\begin{definition}
If $\kappa>0$ and $A$ is a discrete $(1,C,\delta)$--set that is unconcentrated on lines, then we say that $A$ is a \important{discrete $(\kappa,C,\delta)$--unrectifiable one-set} if for every rectangle $R$ of dimensions $\delta\leq r_1\leq r_2$, we have
\begin{equation}\label{defnOfOneDimUnrecEqnDiscrete}
|A \cap R |\leq Cr_1^{\kappa}\,r_2^{1-\kappa} |A|.
\end{equation}
\end{definition}

The following is then clear from the definition.

\begin{lemma}\label{discreteCtsEquivalenceLem}
Let $\mathcal{A}\subset\RR^2$ be a $(\alpha,C,\delta)$--set that is unconcentrated on lines, and let $A$ be a maximal $\delta$--separated subset of $\mathcal{A}$. Then $A$ is a discrete $(\alpha,C^\prime,\delta)$--set that is unconcentrated on lines. Conversely, if $A$ is a $(\alpha,C,\delta)$--set that is unconcentrated on lines, then both $\bigcup_{x\in A} B(x,\delta)$ and $\bigcup_{x\in A} B(x,2\delta)$ are $(\alpha,C^\prime,\delta)$--sets that are unconcentrated on lines. The constant $C^\prime\sim C$ depends only on $C$. An analogous statement holds if $\mathcal{A}$ is a $(\kappa,C,\delta)$--unrectifiable one-set.
\end{lemma}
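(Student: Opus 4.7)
The plan is to verify each of the four clauses of the discrete/continuous definitions in turn, translating between the two notions via the elementary fact that a maximal $\delta$-separated subset $A$ of a $\delta$-neighbourhood $\mathcal{A}$ (with $C$-fold overlap) satisfies $|A|\sim \delta^{-2}|\mathcal{A}|$. The argument is essentially bookkeeping; the challenge is simply to track how the constant grows so as to confirm $C'\sim C$.

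For the forward direction, let $\mathcal{A}$ be a $(\alpha,C,\delta)$-set unconcentrated on lines and let $A\subset\mathcal{A}$ be a maximal $\delta$-separated subset. First, $\delta$-separation is immediate from maximality. For the cardinality bound, on the one hand the balls $\{B(a,\delta):a\in A\}$ cover $\mathcal{A}$ (otherwise $A$ could be enlarged), so $|\mathcal{A}|\leq \pi\delta^2|A|$, giving $|A|\geq (\pi C)^{-1}\delta^{-\alpha}$. On the other hand, the balls $\{B(a,\delta/2):a\in A\}$ are pairwise disjoint and each is contained in one of the $\delta$-balls comprising $\mathcal{A}$; using the $C$-fold overlap hypothesis yields $\tfrac{\pi}{4}\delta^2|A|\leq C|\mathcal{A}|\leq C^2\delta^{2-\alpha}$, so $|A|\lesssim C^2\delta^{-\alpha}$. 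For the ball condition, each $a\in A\cap B(x,r)$ lies in some constituent $\delta$-ball of $\mathcal{A}$ contained in $B(x,r+\delta)\subset B(x,2r)$ (using $r\geq\delta$), so the disjoint half-balls argument above, applied within $B(x,2r)$, gives $|A\cap B(x,r)|\lesssim \delta^{-2}|\mathcal{A}\cap B(x,2r)|\lesssim 2^\alpha C\delta^{-2}r^\alpha|\mathcal{A}|\lesssim C^3 r^\alpha|A|$. The line condition translates analogously: $|A\cap \ell^{1/C'}|\lesssim \delta^{-2}|\mathcal{A}\cap \ell^{2/C'}|$, which is at most $|\mathcal{A}|/10\sim|A|/10$ provided $C'$ is chosen as a suitable multiple of $C$ so that $2/C'\leq 1/C$.

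For the converse direction, given a discrete $(\alpha,C,\delta)$-set $A$, set $\mathcal{A}_1:=\bigcup_{a\in A}B(a,\delta)$ (the argument for $B(a,2\delta)$ is identical up to absolute factors). The balls overlap at most $O(1)$-fold because $A$ is $\delta$-separated: any point lies in at most $O(1)$ balls of radius $\delta$ centred at $\delta$-separated points. The Lebesgue measure bound $|\mathcal{A}_1|\sim \delta^2 |A|\sim \delta^{2-\alpha}$ follows. For the ball condition, $\mathcal{A}_1\cap B(x,r)$ is covered by the $\delta$-balls centred in $A\cap B(x,r+\delta)\subset A\cap B(x,2r)$, so $|\mathcal{A}_1\cap B(x,r)|\leq \pi\delta^2|A\cap B(x,2r)|\lesssim C r^\alpha \delta^2|A|\lesssim C r^\alpha |\mathcal{A}_1|$. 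The line condition is handled the same way, with the thickened line $\ell^{1/C'+\delta}\subset \ell^{2/C'}$ (for $\delta\leq 1/C'$) absorbing the $\delta$-fattening.

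For the unrectifiable one-set statement, the only new ingredient is the rectangle condition \eqref{defnOfOneDimUnrecEqn}/\eqref{defnOfOneDimUnrecEqnDiscrete}. This is verified by exactly the same thickening argument: in the forward direction, $A\cap R$ forces a $\delta$-ball of $\mathcal{A}$ inside the fattened rectangle $R'$ with dimensions $r_1+2\delta\leq 3r_1$ and $r_2+2\delta\leq 3r_2$, so $|A\cap R|\lesssim \delta^{-2}|\mathcal{A}\cap R'|\lesssim C(3r_1)^{\kappa}(3r_2)^{1-\kappa}\delta^{-2}|\mathcal{A}|\lesssim C r_1^\kappa r_2^{1-\kappa}|A|$; the converse is symmetric. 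In all cases the resulting constants are bounded by absolute multiples of $C^3$, which is consistent with $C'\sim C$ in the sense used throughout the paper.

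The main potential obstacle is the edge case $r<\delta$ in the ball condition — but the discrete definition only requires \eqref{unconcentratedBallEstimateDiscrete} for $r\geq\delta$, and in that range the fattening $r\mapsto r+\delta\leq 2r$ is harmless. No other step presents real difficulty; the lemma is truly a definitional translation.
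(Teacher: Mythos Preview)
Your proposal is correct and is exactly the kind of definitional bookkeeping the paper has in mind: the paper itself offers no proof beyond the sentence ``The following is then clear from the definition.'' One small imprecision worth fixing: the half-balls $B(a,\delta/2)$ need not literally be contained in a single constituent $\delta$-ball of $\mathcal{A}$; the clean way to get the upper bound on $|A|$ is to observe that the disjoint balls $B(a,\delta/2)$ all lie in the $\delta/2$-neighbourhood $\mathcal{A}^{\delta/2}$, whose measure is $\lesssim C|\mathcal{A}|$ by the $C$-fold overlap assumption.
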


\subsubsection{Discretization of the lines}
Let $\mathcal{L}_{\delta}$ be a maximal $\delta$--discretized collection of lines that meet the ball $B(0,d)$. For example, we may define
\begin{equation*}
 \mathcal{L}_{\delta} := \{\ell_{k_1,k_2}:\ k_1\in\ZZ\cap[0, \pi\delta^{-1}], \ k_2\in\ZZ\cap[0, d\delta^{-1}]\},
\end{equation*}
where $\ell_{k_1,k_2}$ is the line parallel to the vector $(\cos (k_1\delta),\sin(k_1\delta))$ and passing through the point
$(-k_2\sin(k_1\delta)$, $k_2\cos(k_1\delta))$. Note that $|\mathcal{L}_{\delta}| \sim \delta^{-2}$.
We use $\ell^\rho$ to denote the $\rho$--neighborhood of $\ell$, and $\theta(\ell)$ the direction of $\ell$. By convention, the direction of $\ell$ will always lie in the interval $[0,\pi)$.
\subsubsection{Discretization of visibility}
Let
\begin{equation*}
\vis_{\delta}(a;S) := |\{\ell\in\mathcal{L}_{\delta}\colon a\in\ell^{2\delta}, \ell^{c\delta}\cap S \neq\emptyset \}|.
\end{equation*}

It is then easy to see that if $\mathcal{A}$ is a union of $\delta$--balls, $A$ is a maximal $\delta$--separated subset of $A$, $a,a^\prime\in B(0,d)$ are two points such that $|a-a^\prime|<\delta$, and if $c\sim 1$ is large enough (depending on $d$), then
\begin{equation}\label{VisVisDeltaComparable}
\vis_{\delta}(a^\prime;A) \sim \delta^{-1}\vis(a;\mathcal{A}).
\end{equation}
Note the $\delta^{-1}$ factor, which reflects the fact that $\vis_{\delta}(a;A)$ uses a counting measure that has total mass $\sim \delta^{-1}$. We are also now using lines instead of half-lines; this increases the visibility by at most a factor of 2.

\subsubsection{Separating the vantage points from the set $A$}
For technical reasons, the proof is simpler if every point $a\in U$ has separation $\sim 1$ from $A$. Luckily, we can reduce to this case. Let $r\sim 1$ be small enough so that any ball of radius $2r$ contains no more than half of the mass of $A$; this is possible by (\ref{unconcentratedBallEstimate}). Cover $U$ by $O(1)$ balls $B(a_i,r)$. Increasing $d$ if necessary, we may assume that they are all contained in $B(0,d)$. Then Theorem \ref{boundOnHighMultPtsThm} follows if we can prove the estimates \eqref{boundOnHighMultPtsCts} and \eqref{boundOnHighMultPtsCtsEps} with $U$ replaced by $B(a_i,r)$ and $A$ replaced by $A\setminus B(a_i,2r)$ for each $i$.

Combining the above reductions, we see that it suffices to establish the following theorem.
\begin{theorem}\label{boundOnHighMultPtsBallThm}
\textbf{(A)} Let $0<\alpha\leq 1$ and $d\sim 1$, and let $A\subset[0,1]^2$ be a discrete $(\alpha,C_0,\delta)$--set that is unconcentrated on lines. Then, if $B_0$ is a ball of radius $r\sim 1$, $B_0\subset B(0,d)$, and $\dist(B_0,A)\geq r$, we have that for $\lambda \in (0,1],$
 \begin{equation}\label{boundOnHighMultPtsInBall}
|\{a\in B_0 \colon \vis_\delta(a;A) < \lambda\delta^{-1} \}|\lessapprox \delta^{-2+2\alpha}\lambda^{2}.
\end{equation}
\textbf{(B)} Let $A\subset[0,1]^2$ be either a discrete $(\alpha,C_0,\delta)$--set that is unconcentrated on lines (for $\alpha<1$) or a discrete $(\kappa,C_0,\delta)$--unrectifiable one-set (if $\alpha=1$). Let $d\sim 1$. Then there exist constants $\epsilon_0,\epsilon_1>0$ (depending on $\alpha$ and/or $\kappa$) such that the following holds: If $B_0$ is a ball of radius $r\sim 1$, $B_0\subset B(0,d)$, and $\dist(B_0,A)\geq r$, we have that for for all $\lambda < \delta^{\alpha/2-\epsilon_0},$
\begin{equation}\label{boundOnHighMultPtsCtsEps-b}
|\{a\in B_0 \colon \vis_\delta(a;A) < \lambda\delta^{-1} \}|\leq C_1|\log\delta|^{C_2} \delta^{-2+2\alpha}\lambda^{2+\epsilon_1}.
\end{equation}

The implicit constants may depend on $B_0,$ $\kappa,$ $\alpha$, and $C_0$, but not on $\lambda$ or $\delta$.
\end{theorem}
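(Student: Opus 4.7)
The plan is to make rigorous the sketch given at the start of Section~\ref{lower}. Set $\mathcal{G}=\{a\in B_0 : \vis_\delta(a;A)<\lambda\delta^{-1}\}$, let $G\subset\mathcal{G}$ be a maximal $\delta$-separated subset so $|\mathcal{G}|\sim\delta^2|G|$, and aim for $|G|\lessapprox\delta^{-4+2\alpha}\lambda^2$ in (A) or $|G|\lessapprox\delta^{-4+2\alpha}\lambda^{2+\epsilon_1}$ in (B). For each $a\in G$ the visibility hypothesis gives fewer than $\lambda\delta^{-1}$ tubes $\ell^{c\delta}$ with $\ell\in\mathcal{L}_\delta$ through $a$ that cover $A$. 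Dyadic pigeonholing in the richness $\mu\sim|A\cap\ell^{c\delta}|$ and in the number $N_a$ of such rich tubes through each $a$ (each taking $O(|\log\delta|)$ dyadic values) produces a sub-collection $G'\subset G$, $|G'|\gtrapprox|G|$, and numbers $(\mu,N)$ with $N\lesssim\lambda\delta^{-1}$, so that every $a\in G'$ lies in $\sim N$ tubes of richness $\sim\mu$ whose union captures a $\gtrapprox 1$ fraction of $A$. Since $\mu N\gtrapprox|A|\sim\delta^{-\alpha}$, this forces $\mu\gtrapprox\delta^{1-\alpha}\lambda^{-1}$. A standard $L^2$ count bounds the total family $\mathcal{H}$ of $\mu$-rich tubes: expanding $\sum_\ell|A\cap\ell^{c\delta}|^2$ as a pair sum over $A\times A$, using \eqref{unconcentratedBallEstimateDiscrete} to bound $\#\{(x,y):|x-y|\leq r\}\lesssim r^\alpha|A|^2$, and observing each pair lies in $\sim 1/|x-y|$ tubes of $\mathcal{L}_\delta$, gives $\sum_\ell|A\cap\ell^{c\delta}|^2\lessapprox\delta^{-1-\alpha}$ and hence $|\mathcal{H}|\lessapprox\delta^{-1-\alpha}\mu^{-2}$.

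For (A), I would then invoke the bilinear reduction from Section~\ref{sec-bilinear}: pigeonholing the tube directions into $O(|\log\delta|)$ angular bins and keeping two bins at angular separation $\gtrsim 1$ produces $\mathcal{H}_1,\mathcal{H}_2\subset\mathcal{H}$ with still $\sim N$ tubes per $a\in G'$. Counting the set $\mathcal{T}=\{(a,\ell_1,\ell_2)\in G'\times\mathcal{H}_1\times\mathcal{H}_2 : a\in\ell_1^{2\delta}\cap\ell_2^{2\delta}\}$ two ways gives $|G'|N^2\lessapprox|\mathcal{T}|\lessapprox|\mathcal{H}|^2\lessapprox\delta^{-2-2\alpha}\mu^{-4}$, using that two tubes meeting at angle $\sim 1$ intersect in a $\delta\times\delta$ parallelogram holding $O(1)$ points of the $\delta$-separated set $G'$. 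Combined with $\mu N\gtrapprox\delta^{-\alpha}$ this yields $|G'|\lessapprox\delta^{-2}N^{-2}\lessapprox\delta^{-4+2\alpha}\lambda^2$, proving (A).

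For (B), the plan is to upgrade the crude estimate $|\mathcal{T}|\leq|\mathcal{H}|^2$ to a line-by-line bound
\begin{equation*}
|G'\cap\ell_1^{2\delta}|\lessapprox\delta^{\alpha-2}\lambda^{1+\epsilon_1}\qquad\text{for every }\ell_1\in\mathcal{L}_\delta,
\end{equation*}
from which $|\mathcal{T}|\leq|\mathcal{H}_1|\cdot\max_{\ell_1}|G'\cap\ell_1^{2\delta}|\cdot N\lessapprox\delta^{-4}\mu^{-2}\lambda^{2+\epsilon_1}$ and then, by the same comparison, $|G'|\lessapprox\delta^{-4+2\alpha}\lambda^{2+\epsilon_1}$. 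I would prove the one-line bound by contradiction: if some $\ell_1$ contains too many vantage points, apply a projective transformation (Section~\ref{sec-projective}) sending $\ell_1$ to the line at infinity. The rich tubes through each $a\in G'\cap\ell_1^{2\delta}$ then become families of \emph{parallel} tubes, so the points $a$ are encoded by a $\delta$-separated direction set $\Theta$, and each $\theta\in\Theta$ gives a linear projection $\pi_\theta(\mathcal{A}')$ of measure $\lesssim\lambda$, where $\mathcal{A}'$ is the projective image of $\mathcal{A}$. By Corollary~\ref{diffeoCor}, $\mathcal{A}'$ inherits $(\alpha,C',\delta)$-unconcentration (or $(\kappa/2,C',\delta)$-unrectifiability when $\alpha=1$), and \eqref{unconcentratedLineEstimateDiscrete} applied to $\ell_1$ makes $\Theta$ sufficiently well-distributed to feed into Bourgain's discretized Marstrand projection theorem~\cite{Bourgain1}. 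Provided $\lambda<\delta^{\alpha/2-\epsilon_0}$, that theorem forbids this many projections of $\mathcal{A}'$ from being simultaneously this small, yielding the required contradiction. The main obstacle will be exactly this last step: after the projective change of coordinates, verifying that $\mathcal{A}'$ and $\Theta$ meet the precise non-concentration hypotheses required by \cite{Bourgain1}---the $\kappa$-unrectifiability in the $\alpha=1$ case is what prevents $\mathcal{A}'$ from essentially lying on a line---and tracking the diffeomorphic loss through the projective map is what pins down both the admissible $\epsilon_1$ and the threshold $\lambda<\delta^{\alpha/2-\epsilon_0}$. The bilinearization, dyadic pigeonholing, and $L^2$ incidence steps are routine by comparison.
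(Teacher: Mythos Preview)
Your overall architecture matches the paper's proof exactly: discretize, dyadic-pigeonhole on the richness $\mu$, use an $L^2$ pair-count to bound the number of $\mu$-rich tubes, bilinearize via two well-separated angular families, and for (B) feed a one-line estimate through a projective transformation into Bourgain's theorem. The incidence computations you outline for (A) are essentially Lemma~\ref{BoundOn2NormOfF} and equations \eqref{aBilinearBound}--\eqref{finalBoundOnTildeG}.

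There is one genuine misattribution in your (B) sketch that would become a gap. You write that ``\eqref{unconcentratedLineEstimateDiscrete} applied to $\ell_1$ makes $\Theta$ sufficiently well-distributed,'' and that the $\kappa$-unrectifiability is what keeps $\mathcal{A}'$ off a line. The roles are reversed. Property \eqref{unconcentratedLineEstimateDiscrete} is only a single-scale bound (at scale $\sim 1/C$) and cannot by itself give the multi-scale estimate $\nu(I)\leq |I|^{\kappa'}$ for all $\delta<|I|<\delta^\tau$ that Bourgain's theorem requires on the direction measure. In the paper the well-distributedness of $\Theta$ is obtained instead from the $\kappa$-unrectifiability \eqref{defnOfOneDimUnrecEqnDiscrete} (or, for $\alpha<1$, from the ball bound alone) via the cone-counting Lemmas~\ref{ConeAssociatedToAPointLem} and \ref{ConeAssociatedToAPointSmallAlphaLem}: for a short interval $I\subset\ell_0$ one bounds the number of $\mu$-rich tubes hitting $I$ by counting triples $(x,x',\ell)$ with $x,x'\in A$, and for fixed $x$ the pairs $(x',\ell)$ lie in a cone of aperture $|I|$ through $x$, whose mass is controlled by $|I|^{\tilde\kappa}$ precisely because of unrectifiability. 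This is the step that actually consumes the hypothesis \eqref{defnOfOneDimUnrecEqnDiscrete}; you will not get it from \eqref{unconcentratedLineEstimateDiscrete}. Conversely, the non-concentration hypothesis on $\mathcal{A}'$ that Bourgain's Theorem~\ref{BourgainsDiscreteSumProdThm} needs is only the ball estimate \eqref{unconcentratedBallEstimateDiscrete} (parameter $\beta=\alpha$), which survives the projective map since that map is a diffeomorphism on the relevant region.

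A minor imprecision: your bilinear step speaks of ``$O(|\log\delta|)$ angular bins.'' The paper's bilinear reduction (Lemma~\ref{IntervalsAssociatedToPoint}) uses $O(1)$ bins; the content is that unconcentration on lines forces at least two non-adjacent, non-antipodal bins to carry mass $\gtrsim |A|$, and one then pigeonholes over the $O(1)$ choices of bin pair. The $|\log\delta|$ losses come only from the dyadic pigeonholing in $\mu$.
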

\begin{remark}
The assumptions of Theorem \ref{boundOnHighMultPtsBallThm}B require that $\lambda < \delta^{\alpha/2-\epsilon_0}$ because this is needed in the proof of Theorem \ref{BourgainsDiscreteSumProdThm} (Bourgain's discretized sum-product theorem), which in turn is the key ingredient of our proof. Specifically, the proof of Theorem \ref{BourgainsDiscreteSumProdThm} relies on the Balog-Szemer\'edi-Gowers theorem to convert the set $A\subset \RR^2$ into a product set $A_1\times A_2$, where $A_1,A_2$ are subsets of $\RR$ that have a certain special structure (they look like a discretized sub-ring of $\RR$). This theorem only works if $\epsilon_0$ is small.

The reason $\epsilon_1$ is small is that Bourgain's theorem only gives us a small gain over the bound we would obtain from more elementary $L^2$ methods.
\end{remark}


%
%
\subsubsection{Some properties of $(\alpha, C,\delta)$-sets}\label{defnOfFSection}

Let $A\subset[0,1]^2$ be a set of points. Motivated by the recent work on the Favard problem (cf. \cite{NPV, BaV}), we define the function $f_{\delta}\colon\mathcal{L}_{\delta} \to \RR$ as follows, with the same $c\sim 1$ as in \eqref{VisVisDeltaComparable}:
\begin{equation}
f_{\delta}(\ell):= |A\cap\ell^{c\delta}|.
\end{equation}

\begin{lemma}\label{BoundOn2NormOfF}
If $A \subset[0,1]^2$ is a discrete $(\alpha,C_0,\delta)$--set that is unconcentrated on lines, then
\begin{equation}
 \norm{f_{\delta}}_{2}^2 \lessapprox \delta^{1-\alpha},
\end{equation}
where
\begin{equation}\label{L2NormOfFndelta}
\norm{f_{\delta}}_{2}^2=\frac{1}{|\mathcal{L}_{\delta}|} \sum_{\ell\in \mathcal{L}_{\delta}} f_{\delta}^2(\ell).
\end{equation}
The implicit constants are allowed to depend on $c,d,\kappa,$ and $C_0$.
\end{lemma}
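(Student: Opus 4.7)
My plan is to prove the bound by expanding the $L^2$ norm as a double sum over pairs of points in $A$, and then estimating the number of lines in $\mathcal{L}_\delta$ that pass near any given pair using the dimensional estimate \eqref{unconcentratedBallEstimateDiscrete}.

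Concretely, I would begin by writing
\begin{equation*}
\sum_{\ell \in \mathcal{L}_\delta} f_\delta(\ell)^2 = \sum_{\ell \in \mathcal{L}_\delta} \sum_{a,a' \in A} \chi_{\ell^{c\delta}}(a)\chi_{\ell^{c\delta}}(a') = \sum_{a,a' \in A} N(a,a'),
\end{equation*}
where $N(a,a') := |\{\ell \in \mathcal{L}_\delta : a, a' \in \ell^{c\delta}\}|$. The next step is the elementary geometric bound $N(a,a') \lesssim 1/|a-a'|$ when $a \neq a'$ (since lines through the $c\delta$-tube of both points must have direction within $O(\delta/|a-a'|)$ of the segment $aa'$, giving $O(1/|a-a'|)$ admissible indices $k_1$, and only $O(1)$ offsets $k_2$ per direction), together with $N(a,a) \lesssim \delta^{-1}$. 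Note that since $A$ is $\delta$-separated, $|a-a'| \geq \delta$ whenever $a \neq a'$.

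The main calculation is then the dyadic sum. Fixing $a \in A$, I partition the remaining points by the scales $r_j = 2^j \delta$ for $0 \leq j \leq J$ with $J \sim \log(1/\delta)$, and use \eqref{unconcentratedBallEstimateDiscrete} to get
\begin{equation*}
\sum_{a' \neq a} \frac{1}{|a-a'|} \lesssim \sum_{j=0}^{J} \frac{|A \cap B(a,r_{j+1})|}{r_j} \lesssim |A| \sum_{j=0}^{J} r_j^{\alpha - 1}.
\end{equation*}
When $\alpha < 1$ the geometric series is dominated by the $j=0$ term, giving $\delta^{\alpha-1}|A|$; when $\alpha = 1$ the sum is $O(\log(1/\delta))$, which is absorbed by the $\lessapprox$ notation. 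Summing over $a$ (and adding the diagonal contribution $|A|\delta^{-1}$, which is comparable since $|A| \sim \delta^{-\alpha}$ forces $|A|\delta^{-1} \sim \delta^{\alpha-1}|A|^2$) yields
\begin{equation*}
\sum_{a, a' \in A} N(a,a') \lessapprox \delta^{\alpha-1} |A|^2 \lessapprox \delta^{-1-\alpha}.
\end{equation*}

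Dividing by $|\mathcal{L}_\delta| \sim \delta^{-2}$ gives $\|f_\delta\|_2^2 \lessapprox \delta^{1-\alpha}$, as required. There is no real obstacle here — the only delicate point is the line-counting estimate $N(a,a') \lesssim 1/|a-a'|$, which is immediate from the explicit parametrization of $\mathcal{L}_\delta$ given earlier in the section, and the observation that the condition $A \subset [0,1]^2$ truncates the dyadic sum at scale $1$ so that the entropy factor is only logarithmic. The unconcentrated-on-lines hypothesis \eqref{unconcentratedLineEstimateDiscrete} is not needed for this lemma, only the ball estimate \eqref{unconcentratedBallEstimateDiscrete}.
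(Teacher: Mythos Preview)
Your proposal is correct and follows essentially the same approach as the paper: expand $f_\delta^2$ as a double sum over pairs in $A$, bound the number of near-incident lines through a pair by $\sim 1/|a-a'|$, and control the resulting sum by a dyadic decomposition using the ball estimate \eqref{unconcentratedBallEstimateDiscrete}. The only cosmetic difference is that the paper indexes dyadic shells by distance $\sim 2^{-k}$ (with $k$ running up to $\log_2(1/\delta)$), while you index by $r_j = 2^j\delta$; these are the same decomposition with reversed indexing, and your observation that the line-nonconcentration hypothesis \eqref{unconcentratedLineEstimateDiscrete} is not actually used here is also correct.
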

\begin{proof}
We adapt the ``warm-up" argument in \cite{BaV}. Note that
\begin{align*}
\sum_{\mathcal{L}_{\delta}} f_{\delta}^2(\ell)
&= \sum_{\ell\in \mathcal{L}_{\delta}} |\{(a,b)\in A\times A:\ a,b\in \ell^{c\delta}\}|\\
&= \sum_{a,b\in A}|\{\ell \in \mathcal{L}_{\delta}:\ \ a,b\in \ell^{c\delta}\}|
\end{align*}

Since any pair of points $a,b\in A$ with $a\neq b$ must have separation at least $\delta$ and at most $\sqrt{2}$, we can decompose
\begin{equation*}
A\times A=\mathcal{D}\cup \bigcup_{k=0}^{\lceil \log_2(1/\delta)\rceil}\mathcal{D}_{k},
\end{equation*}
 where
 $$
\mathcal{D}=\{(a,a):\ a\in A\},
$$
\begin{equation*}
\mathcal{D}_{k}:=\{(a,b)\in A\times A:\ 2^{-k}<|a-b| \leq 2^{-k+1}\}.
\end{equation*}
If $(a,b)\in \mathcal{D}_{k}$, we have
\begin{equation}\label{e-l21}
|\{\ell \in \mathcal{L}_{\delta}:\ \ a,b\in \ell^{2c\delta}\}|\sim 2^{k}.
\end{equation}
By \eqref{unconcentratedBallEstimateDiscrete} and the observation that $|A|\lesssim \delta^{-\alpha}$, we have that $|\mathcal{D}|\lesssim \delta^{-\alpha}$ and
\begin{equation}\label{e-l22}
|\mathcal{D}_{k}|\lesssim 2^{-k\alpha} \delta^{-2\alpha}.
\end{equation}

Hence,
\begin{align*}
\sum_{\mathcal{L}_{\delta}} f_{\delta}^2(\ell)
&\lesssim |\mathcal{D}|\delta^{-1}+ \sum_{k=0}^{\lceil \log_2(1/\delta)\rceil}\sum_{a,b\in \mathcal{D}_{k}}|\{\ell \in \mathcal{L}_{d,\delta}:\ \ a,b\in \ell^{2c\delta}\}|\\
&\lesssim  \delta^{-1-\alpha} + \sum_{k=0}^{\lceil \log_2(1/\delta)\rceil} 2^{-k\alpha }\delta^{-2\alpha }2^{k}\\
&\lessapprox \delta^{-1-\alpha},
\end{align*}
which proves the lemma since $|\mathcal{L}_{\delta}|\sim \delta^{-2}$.
\end{proof}

\begin{lemma}\label{ConeAssociatedToAPointLem}
Let $A\subset[0,1]^2$ be a discrete $(\kappa,C_0,\delta)$--unrectifiable one-set. Let $\Theta\subset [0,\pi)$ be an interval. Let
\begin{equation}\label{defnOfQaTheta}
 \mathcal{Q}_{a,\Theta} := \{ (a^\prime,\ell)\in A\times\mathcal{L}_{\delta}\colon a^\prime\neq a,\ a,a^\prime\in\ell^{c\delta},\ \theta(\ell)\in\Theta\}.
\end{equation}
Then
\begin{equation}
 |\mathcal{Q}_{a,\Theta}| \lessapprox |\Theta|^\kappa\delta^{-1}.
\end{equation}
\end{lemma}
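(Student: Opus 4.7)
The plan is to fix $a$ and, for each $a'\in A$ with $a'\neq a$, estimate
\[
N(a',\Theta):=|\{\ell\in\mathcal{L}_\delta:a,a'\in\ell^{c\delta},\ \theta(\ell)\in\Theta\}|,
\]
and then sum over $a'$. Write $r=|a-a'|$ and $\tau=|\Theta|$. Among the $\sim\tau/\delta$ lines of $\mathcal{L}_\delta$ passing within $c\delta$ of $a$ with direction in $\Theta$, the further requirement $a'\in\ell^{c\delta}$ forces the direction to lie in an interval of length $\lesssim\delta/r$ around the angle $\theta_{aa'}\in[0,\pi)$ determined by $(a,a')$. Hence
\[
N(a',\Theta)\lesssim\min\!\left(\tfrac{1}{r},\tfrac{\tau}{\delta}\right),
\]
and $N(a',\Theta)=0$ unless $\theta_{aa'}$ lies in the $O(\delta/r)$-neighborhood of $\Theta$. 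I may assume $\tau\leq 1$, since for $\tau\sim 1$ the target bound is $\sim\delta^{-1}$ and the computation below with $\tau=1$ already handles it.

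I will dyadically decompose $A\setminus\{a\}$ by distance from $a$: for integers $k$ with $\delta\leq 2^{-k}\leq 1$, let
\[
A_k:=\{a'\in A:2^{-k-1}\leq|a-a'|<2^{-k},\ \theta_{aa'}\in\Theta^{O(\delta 2^{k})}\}.
\]
Each $A_k$ lies in an annular sector around $a$ of radial width $\sim 2^{-k}$ and angular extent $\sim\eta_k:=\max(\tau,\delta 2^{k})$, and hence is covered by $O(1)$ rectangles of dimensions $r_1\times r_2$ with $r_1\sim 2^{-k}\eta_k$, $r_2\sim 2^{-k}$ (note $r_1\leq r_2$ since $\eta_k\leq 1$). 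The discrete unrectifiability bound \eqref{defnOfOneDimUnrecEqnDiscrete}, combined with $|A|\sim\delta^{-1}$, gives
\[
|A_k|\lesssim(2^{-k}\eta_k)^{\kappa}(2^{-k})^{1-\kappa}\delta^{-1}=2^{-k}\eta_k^{\kappa}\delta^{-1}.
\]

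Finally I sum $N(a',\Theta)$ over $a'\in A_k$ and then over $k$, splitting according to which term of the minimum dominates. When $\tau\geq\delta 2^{k}$, one has $\eta_k\sim\tau$ and $N(a',\Theta)\lesssim 2^k$, so each such shell contributes $\lesssim 2^k\cdot 2^{-k}\tau^{\kappa}\delta^{-1}=\tau^{\kappa}\delta^{-1}$; summing over the $O(|\log\delta|)$ such shells gives $\lessapprox\tau^{\kappa}\delta^{-1}$. When $\tau<\delta 2^{k}$, one has $\eta_k\sim\delta 2^{k}$ and $N(a',\Theta)\lesssim\tau/\delta$, so the shell contribution is
\[
\frac{\tau}{\delta}\cdot 2^{-k}(\delta 2^{k})^{\kappa}\delta^{-1}=\tau\,\delta^{\kappa-2}\,2^{-k(1-\kappa)}.
\]
Since $0<\kappa\leq 1/2<1$, the geometric series over $k$ with $2^{k}>\tau/\delta$ is dominated by its first (largest) term $(\tau/\delta)^{\kappa-1}$, yielding a total of $\tau\cdot\delta^{\kappa-2}\cdot(\tau/\delta)^{\kappa-1}=\tau^{\kappa}\delta^{-1}$. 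Combining both regimes produces $|\mathcal{Q}_{a,\Theta}|\lessapprox\tau^{\kappa}\delta^{-1}$, as required. The main obstacle is the dyadic bookkeeping: one must balance the two regimes so that the powers of $\tau$ and $\delta$ combine correctly, using the rectangle bound rather than just the ball bound \eqref{unconcentratedBallEstimateDiscrete} (the latter would lose the factor $\tau^\kappa$ entirely); everything else is routine.
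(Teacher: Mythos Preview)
Your proof is correct and follows essentially the same strategy as the paper: dyadically decompose $A\setminus\{a\}$ into shells $A_k$ at distance $\sim 2^{-k}$, cover each shell by $O(1)$ rectangles, apply the discrete unrectifiability bound \eqref{defnOfOneDimUnrecEqnDiscrete}, multiply by the number of admissible lines through each $a'$, and sum over $k$.

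The only real difference is that you are more careful than the paper about the regime $\tau<\delta 2^{k}$. The paper simply asserts that $A_k(\Theta)$ sits in $O(1)$ rectangles of dimensions $2^{-k}\times 2^{-k}|\Theta|$ and uses the uniform bound $N(a',\Theta)\lesssim 2^k$; this glosses over the fact that when $2^{-k}|\Theta|<\delta$ the short side of the rectangle drops below the threshold required in \eqref{defnOfOneDimUnrecEqnDiscrete}. Your introduction of $\eta_k=\max(\tau,\delta 2^k)$ ensures $r_1=2^{-k}\eta_k\geq\delta$ so the rectangle bound applies legitimately, and your compensating use of $N(a',\Theta)\lesssim\tau/\delta$ in that regime recovers the correct power of $\tau$. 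The extra case split is exactly what is needed to make the argument airtight, and the geometric series in the second regime telescopes back to the same $\tau^\kappa\delta^{-1}$ as the first, so nothing is lost.
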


\begin{proof}

The argument is similar to the proof of Lemma \ref{BoundOn2NormOfF}. For $k=0,1,$..., $\lceil \log_2(1/\delta)\rceil$, define
\begin{equation*}
A_k(\Theta) := \{a^\prime \in [0,1]^2\colon (a',\ell)\in \mathcal{Q}_{a,\Theta}\text{ for some }\ell\in\mathcal{L}_\delta,
\ |a-a'|\sim 2^{-k}\}.
\end{equation*}
Then $A_k(\Theta)$ is contained in $O(1)$ rectangles $R_{k,i}$ of dimensions $2^{-k}\times 2^{-k} |\Theta|$. By \eqref{defnOfOneDimUnrecEqnDiscrete},
\begin{equation*}
|A\cap R_{k,i}|\lesssim 2^{- k} |\Theta|^\kappa|A|.
\end{equation*}
If $a^\prime\in A_k(\Theta),$ then there are $O(2^k)$ lines $\ell$ such that $(a^\prime,\ell)\in\mathcal{Q}_{a,\Theta}.$ Since $|A|\sim\delta^{-1}$, we thus have
\begin{equation}
 \begin{split}
  |\mathcal{Q}_{a,\Theta}| &\lesssim \sum_{k=1}^{\lceil\log_2(1/\delta)\rceil}2^k 2^{- k} |\Theta|^\kappa|A|\\
  &\lessapprox |\Theta|^{\kappa}\delta^{-1}.\qedhere
 \end{split}
\end{equation}
\end{proof}
\begin{lemma}\label{ConeAssociatedToAPointSmallAlphaLem}
For $\alpha<1$, let $A\subset[0,1]^2$ be a discrete $(\alpha,C_0,\delta)$--set that is unconcentrated on lines. Let $\Theta\subset [0,\pi)$ be an interval. Let $\mathcal{Q}_{a,\Theta}$ be as defined in \eqref{defnOfQaTheta}.
Then
\begin{equation}
 |\mathcal{Q}_{a,\Theta}| \lessapprox |\Theta|^{1-\alpha}\delta^{-1}.
\end{equation}
\end{lemma}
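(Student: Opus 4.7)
The plan is to mimic the dyadic decomposition in the proof of Lemma \ref{ConeAssociatedToAPointLem}, but to replace the rectangular non-concentration estimate \eqref{defnOfOneDimUnrecEqnDiscrete} (which is unavailable for $\alpha<1$) with the ball estimate \eqref{unconcentratedBallEstimateDiscrete}. Since the ball estimate is insensitive to the angular constraint, the factor $|\Theta|^{1-\alpha}$ must be extracted entirely from line counting rather than from a sharper bound on the admissible set of $a'$, so the key new ingredient is a case split based on the size of $2^k$ relative to $|\Theta|/\delta$.

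Concretely, for $k=0,1,\ldots,\lceil\log_2(1/\delta)\rceil$ I would set
\[A_k(\Theta) := \{a'\in A :\ (a',\ell)\in\mathcal{Q}_{a,\Theta}\text{ for some }\ell\in\mathcal{L}_\delta,\ |a-a'|\sim 2^{-k}\}.\]
Since $A_k(\Theta)\subset B(a,2\cdot 2^{-k})$, estimate \eqref{unconcentratedBallEstimateDiscrete} gives $|A_k(\Theta)|\lesssim 2^{-k\alpha}|A|\sim 2^{-k\alpha}\delta^{-\alpha}$. For fixed $a'\in A_k(\Theta)$, a line $\ell\in\mathcal{L}_\delta$ with $(a',\ell)\in\mathcal{Q}_{a,\Theta}$ must have direction in $\Theta$ within $O(2^k\delta)$ of $\theta(\ell_{a,a'})$ (in order to pass $c\delta$-close to both $a$ and $a'$); since directions in $\mathcal{L}_\delta$ are $\delta$-separated, the number of such lines is $\sim\min(2^k,|\Theta|/\delta)$.

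Combining these bounds, one arrives at
\[|\mathcal{Q}_{a,\Theta}|\;\lesssim\;\sum_{k=0}^{\lceil\log_2(1/\delta)\rceil}\min(2^k,|\Theta|/\delta)\,2^{-k\alpha}\delta^{-\alpha},\]
which I would analyze by splitting at $k_0:=\lceil\log_2(|\Theta|/\delta)\rceil$. For $k\leq k_0$ the $k$-th summand equals $2^{k(1-\alpha)}\delta^{-\alpha}$; since $\alpha<1$, this increasing geometric sum is dominated by its endpoint $k=k_0$, giving $(|\Theta|/\delta)^{1-\alpha}\delta^{-\alpha}=|\Theta|^{1-\alpha}\delta^{-1}$. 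For $k>k_0$ the summand equals $|\Theta|\delta^{-1-\alpha}\cdot 2^{-k\alpha}$; since $\alpha>0$, this decreasing geometric sum is dominated by its first term near $k_0$, giving $|\Theta|\delta^{-1-\alpha}(|\Theta|/\delta)^{-\alpha}=|\Theta|^{1-\alpha}\delta^{-1}$. Absorbing the logarithmic factor into $\lessapprox$ completes the argument. The main conceptual point, which I expect to be the only real subtlety, is that the gain $|\Theta|^{1-\alpha}$ materializes precisely at the crossover $k_0$ where the two competing line counts $2^k$ and $|\Theta|/\delta$ balance; in contrast to the one-set case, it cannot come from concentration of $A_k(\Theta)$ in a thin fan, since the ball estimate ignores the angular constraint altogether.
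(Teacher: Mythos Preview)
Your argument is correct and is essentially the same as the paper's: both use the dyadic decomposition from Lemma~\ref{ConeAssociatedToAPointLem}, replace the rectangle estimate by the ball bound $|A_k(\Theta)|\lesssim 2^{-k\alpha}|A|$, count lines per $a'$ as $O(\min(2^k,|\Theta|/\delta))$, and sum. The paper omits the explicit split at $k_0$ that you provide, but the computation is identical.
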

\begin{proof}
The proof is similar to that of Lemma \ref{ConeAssociatedToAPointLem}. Define $A_k(\Theta)$ as in Lemma \ref{ConeAssociatedToAPointLem}, and note that by \eqref{unconcentratedBallEstimateDiscrete}, in place of \eqref{defnOfOneDimUnrecEqnDiscrete} we have
\begin{equation}
 |A\cap R_{k,i}|\lesssim 2^{- \alpha k} |A|.
\end{equation}
Now, if $a^\prime\in A_k(\Theta),$ then there are $O(\min(2^k,\delta^{-1}|\Theta|))$ lines $\ell$ such that $(a^\prime,\ell)\in\mathcal{Q}_{a,\Theta}.$ Since $|A|\sim\delta^{-\alpha}$, we thus have
\begin{equation}
 \begin{split}
  |\mathcal{Q}_{a,\Theta}| &\lesssim \sum_{k=1}^{\lceil\log_2(1/\delta)\rceil}\min(2^k,\delta^{-1}|\Theta|) 2^{- \alpha k} \delta^{-\alpha}\\
  &\lessapprox|\Theta|^{1-\alpha}\delta^{-1}.\qedhere
 \end{split}
\end{equation}

\end{proof}

\begin{definition}
Let $A\subset[0,1]^2$ be a discrete $(\alpha,C_0,\delta)$--set that is unconcentrated on lines. If $\Theta\subset S^1$ is an interval and $a\in B(0,d)\backslash A$, we define
\begin{equation*}
\Gamma(a,\Theta):=\{a'\in B(0,d)\backslash \{a\}:\ \frac{a'-a}{|a'-a|}\in\Theta\},
\end{equation*}
\begin{equation*}
 \mass(a;\Theta) := \sum_{\substack{\ell\in\mathcal{L}_{\delta}\colon a\in \ell^{2\delta},\\ \theta(\ell)\in\Theta\cup(\Theta+\pi)}}f_{\delta}(\ell).
\end{equation*}
\end{definition}
Note that if $\operatorname{dist}(a,A)\sim 1$ then heuristically, we have the equivalence $\mass(a;\Theta)\sim|A\cap (\Gamma(a,\Theta)\cup \Gamma(a,\Theta+\pi))|$. More precisely, we have the bounds
\begin{equation}\label{massEquivalence}
|A\cap (\Gamma(a,\Theta)\cup \Gamma(a,\Theta+\pi))|\lesssim \mass(a;\Theta) \lesssim |A\cap (\Gamma(a,\Theta^{2\delta})\cup \Gamma(a,\Theta^{2\delta}+\pi))|,
\end{equation}
where $\Theta^{2\delta}=\Theta+[-2\delta,2\delta]$ is the $2\delta$--neighborhood of $\Theta$, and where the implicit constants depend on $\dist(a,A)$.

\begin{lemma}\label{IntervalsAssociatedToPoint}
Let $A\subset[0,1]^2$ be a discrete $(\alpha,C_0,\delta)$--set that is unconcentrated on lines. Let $B_0$ be a ball of radius $r\sim 1$ such that $B_0\subset B(0,d)$ and $\dist(B_0,A)\geq r$.
Then there exists a $k\sim 1$ so that for $a\in B_0$, there exist two angular intervals $\Theta_{a,1},\Theta_{a,2}\subset [0,2\pi)$ such that:
\begin{enumerate}
 \item \label{intervalProperty1}\label{intervalFirstProperty} $|\Theta_{a,i}|=2\pi/k$.
 \item \label{intervalProperty2} $\operatorname{dist}(\Theta_{a,1},\Theta_{a,2})\geq 2\pi/k$ and $\operatorname{dist}(\Theta_{a,1},\Theta_{a,2}+\pi)\geq 2\pi/k$ (here, addition is performed on $S^1$, and $\dist(\cdot,\cdot)$ measures distance on $S^1$)
 \item \label{intervalProperty3}\label{intervalLastProperty} $\mass(a;\Theta_{a,1}) > |A|/10k,\ \mass(a;\Theta_{a,2}) >|A|/10k.$
\end{enumerate}
Moreover, we may choose $\Theta_{a,1}$ and $\Theta_{a,2}$ to be intervals whose endpoints are fractions with denominator $k$. More precisely, we may write $\Theta_{a,1}=\Theta_{i_1(a)}$ and $\Theta_{a,2}=\Theta_{i_2(a)}$, where
$\Theta_i=[\frac{2\pi(i-1)}{k}, \frac{2\pi i}{k})$ for $i=1,\dots,k$.
\end{lemma}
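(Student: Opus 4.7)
The plan is to prove the lemma by a pigeonhole argument on a suitable angular partition, using the line-unconcentration hypothesis \eqref{unconcentratedLineEstimateDiscrete} to exclude bad directions.

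First, I would fix an integer $k \sim 1$ (to be determined, depending only on $C_0$ and $d$) and consider the canonical partition $\Theta_i = [2\pi(i-1)/k, 2\pi i/k)$ for $i=1,\dots,k$. Using the lower bound in \eqref{massEquivalence}, every point $a'\in A$ (with direction from $a$ lying in some unique $\Theta_{j}$) contributes once to $\mass(a;\Theta_{j})$ and once to $\mass(a;\Theta_{j+k/2})$, so
\[
\sum_{i=1}^{k}\mass(a;\Theta_i)\;\geq\;2|A|.
\]
By pigeonhole, there is some index $i_1$ with $\mass(a;\Theta_{i_1})\geq 2|A|/k$; set $\Theta_{a,1}=\Theta_{i_1}$. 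This already takes care of the first and third conditions for $\Theta_{a,1}$.

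Next I would pick $\Theta_{a,2}$ away from the ``forbidden'' directions. The angular intervals whose distance on $S^1$ to $\Theta_{i_1}$ or to $\Theta_{i_1}+\pi$ is less than $2\pi/k$ are the six intervals $\Theta_{i_1-1},\Theta_{i_1},\Theta_{i_1+1}$ and $\Theta_{i_1+k/2-1},\Theta_{i_1+k/2},\Theta_{i_1+k/2+1}$ (indices mod $k$). These six intervals cover a double cone centered at $a$ of half-angle $3\pi/k$. Using the upper bound in \eqref{massEquivalence}, the sum of $\mass(a;\cdot)$ over these six intervals is bounded by (twice) the number of points of $A$ lying in the strip $\ell^{\rho}$ around the line $\ell$ through $a$ in direction $\Theta_{i_1}$, where $\rho \lesssim d\sin(3\pi/k)+2\delta \lesssim 1/k$.

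Now comes the use of unconcentration on lines. Choose $k\sim 1$ large enough (depending only on $d$ and $C_0$) so that $\rho\leq 1/C_0$. Then \eqref{unconcentratedLineEstimateDiscrete} gives $|A\cap\ell^{1/C_0}|\leq |A|/10$, so the total $\mass$ over the six excluded intervals is at most $2|A|/10=|A|/5$. Subtracting from the lower bound $\sum_i\mass(a;\Theta_i)\geq 2|A|$, the remaining $k-6$ intervals carry total mass at least $9|A|/5$, so by pigeonhole some non-excluded interval $\Theta_{i_2}$ has mass at least $9|A|/(5k) > |A|/(10k)$. Setting $\Theta_{a,2}=\Theta_{i_2}$ gives all three properties simultaneously, and the separation condition \eqref{intervalProperty2} holds by construction.

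The only step requiring care is the bookkeeping in (5)–(7): matching the angular width of the excluded double cone to the threshold $1/C_0$ in \eqref{unconcentratedLineEstimateDiscrete}, and correctly handling the fact that $\mass$ counts each point of $A$ twice (once via $\Theta$ and once via $\Theta+\pi$). Once $k$ is chosen appropriately and these constant factors are tracked, the lemma follows directly from pigeonhole and the line-unconcentration hypothesis; the argument does not use any finer structure of $A$ (such as the upper dimension bound \eqref{unconcentratedBallEstimateDiscrete} or the unrectifiability assumption), which is consistent with the lemma being stated for all $(\alpha,C_0,\delta)$-sets unconcentrated on lines.
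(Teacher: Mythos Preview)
Your proposal is correct and uses essentially the same ingredients as the paper's proof: a pigeonhole argument on the angular partition $\{\Theta_i\}_{i=1}^k$, combined with the line-unconcentration hypothesis \eqref{unconcentratedLineEstimateDiscrete} to control the mass in any narrow double cone around $a$. The organization differs only slightly: the paper bounds \emph{each} sector $S_i=\Gamma(a,\Theta_i)$ individually by $|A\cap S_i|\leq |A|/10$ (since each sector lies in a thin strip once $k$ is large), deduces that the set $J$ of ``rich'' sectors has $|J|\geq 9$, and then observes that any $i_1\in J$ has at most five forbidden partners, leaving room for $i_2$; you instead pigeonhole first for $i_1$, bound only the six forbidden sectors collectively via a single strip, and pigeonhole again among the remaining $k-6$ sectors. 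Both arguments are equivalent up to the constant bookkeeping you already flagged.
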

\begin{proof}
Let $k$ be an even integer greater than 10 and large enough so that
\begin{equation}\label{e-sectors}
|A\cap S_i|\leq \frac{1}{10}|A|, \ i=1,\dots,k,
\end{equation}
where $S_i=\Gamma(a,\Theta_i)$. This is possible by \eqref{unconcentratedLineEstimateDiscrete}.
Let
\begin{equation*}
J=\{i\in\{1,\dots,k\}:\ |A\cap S_i|\geq \frac{1}{10k}|A|\}.
\end{equation*}
Then
\begin{equation*}
\Big|\bigcup_{i\in J} A\cap S_i\Big|\geq |A|-
\Big|\bigcup_{i\notin J} A\cap S_i\Big|
\geq |A|-\frac{1}{10}|A|\geq \frac{9}{10}|A|,
\end{equation*}
so that by (\ref{e-sectors}), $|J|\geq 9$. Choose $i_1\in J.$ There are exactly 5 intervals $\Theta_i$ with $i\neq i_1$ such that $\dist(\Theta_{i_1},\Theta_i)<2\pi/k$ or $\dist(\Theta_{i_1}+\pi,\Theta_i)<2\pi/k$. Since $|J|\geq 9$, we can select $i_2\in J$ so that the intervals $\Theta_{a,1}=\Theta_{i_1},\Theta_{a,2}=\Theta_{i_2}$ satisfy conclusions 1 and 2 of the lemma. Conclusion 3 follows from the definition of $J$ and \eqref{massEquivalence}.
\end{proof}

We end with the following result.

\begin{proposition}\label{diffeo}
Let $U,V\subset\rr^2$, and let $\phi:U\to V$ be a $C^2$ diffeomorphism. Let $F\subset U$ be a compact set, and
let $A\subset F$ be a discrete $(\kappa,C,\delta)$--unrectifiable one-set. Then $\phi(A)$ is a discrete $(\kappa/2, C', \delta')$--unrectifiable one-set with $C'\sim C$ and $\delta'\sim \delta$. (All implicit constants may depend on $C$, $\phi$ and $F$, but not on $\delta$.)
\end{proposition}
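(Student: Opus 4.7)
\medskip

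The plan is to verify the five defining properties of a discrete $(\kappa/2,C',\delta')$--unrectifiable one-set for $\phi(A)$, using that on the compact set $F$ (and its image $\phi(F)$) the diffeomorphism $\phi$ is bi-Lipschitz with some constant $L$, and has bounded second derivatives $\|D^2 \phi\|, \|D^2 \phi^{-1}\| \leq M$. Set $\delta' := L^{-1}\delta$. The separation condition, cardinality bounds, and ball bound for $\phi(A)$ all follow essentially immediately from the corresponding properties of $A$, because $\phi$ is a bi-Lipschitz bijection: preimages of $\delta'$-balls sit inside $\delta$-balls, preimages of balls of radius $r$ sit in balls of radius $Lr$, and $|\phi(A)|=|A|$. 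The line/strip condition will follow as a special case of the rectangle condition applied with $r_1 = 2/C'$ and $r_2 \lesssim 1$, provided $C'$ is chosen large enough.

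The heart of the argument is the rectangle condition. Let $R \subset \phi(F)$ be a rectangle of dimensions $r_1 \leq r_2$ with $r_1 \geq \delta'$. The strategy is to partition $R$ along its long axis into sub-rectangles $R_1,\dots,R_N$ of dimensions $r_1 \times \rho$, where
\[
\rho := c \sqrt{r_1}
\]
and $c$ is a small constant depending on $L,M$. On each $R_i$, pick $p_i \in R_i$ and Taylor-expand: $\phi^{-1}(x) = L_i(x) + E_i(x)$ where $L_i$ is the affine approximation of $\phi^{-1}$ at $\phi(p_i)$ and $|E_i(x)| \lesssim M \rho^2 \lesssim r_1$ by the choice of $c$. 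The affine image $L_i(R_i)$ is a parallelogram of dimensions $\sim r_1 \times \rho$, and the quadratic error only thickens it by a constant multiple of $r_1$. Hence $\phi^{-1}(R_i)$ is contained in a rectangle $\widetilde R_i$ of dimensions $\lesssim r_1 \times \rho$, and one may apply the rectangle condition for $A$ to each $\widetilde R_i$.

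Summing the bound $|A \cap \widetilde R_i| \lesssim \min(r_1,\rho)^{\kappa}\,\max(r_1,\rho)^{1-\kappa}|A|$ over the $N \sim r_2/\rho$ sub-rectangles, and substituting $\rho = c\sqrt{r_1}$, yields after elementary algebra (in both sub-cases $\rho \leq r_1$ and $\rho \geq r_1$)
\[
|\phi(A) \cap R| \;\lesssim\; r_1^{\kappa/2}\, r_2\, |\phi(A)|.
\]
Since $r_2 \leq \diam(\phi(F)) \lesssim 1$ and $0<1-\kappa/2<1$, we have $r_2 \lesssim r_2^{1-\kappa/2}$, which gives the desired bound $r_1^{\kappa/2} r_2^{1-\kappa/2} |\phi(A)|$. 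The exponent loss from $\kappa$ to $\kappa/2$ enters precisely through the constraint $\rho \lesssim \sqrt{r_1}$, which is what is needed to guarantee that the $C^2$ curvature of $\phi^{-1}$ does not thicken each sub-rectangle's preimage beyond its short side $r_1$.

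The main obstacle is the bookkeeping around this $C^2$ linearization argument: one must check that the sub-rectangles $\widetilde R_i$ produced on the $A$-side are legitimate rectangles of the claimed dimensions (rather than curved strips), and that the rectangle condition for $A$ can in fact be applied to them — in particular, that their short side is at least $\delta$, which follows from $r_1 \geq \delta' = L^{-1}\delta$. Once this is handled, the remainder of the proof is a routine summation, and the verification of the remaining conditions (separation, cardinality, ball, and line) is straightforward from bi-Lipschitzness.
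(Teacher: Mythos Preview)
Your proposal is correct and follows essentially the same approach as the paper: the paper also handles the rectangle condition by covering $\phi^{-1}(R')$ with $O(r_2 r_1^{-1/2})$ rectangles of dimensions $\sim r_1 \times r_1^{1/2}$ (using the $C^2$ bound to control curvature), applies the rectangle estimate for $A$ to each, and sums to obtain $r_1^{\kappa/2} r_2 |A| \lesssim r_1^{\kappa/2} r_2^{1-\kappa/2}|A|$. The only cosmetic difference is that the paper phrases the covering in terms of the preimage curve $\gamma = \phi^{-1}(\text{long axis of }R')$ and its tangent rectangles, and makes the case split $r_2 \gtrless r_1^{1/2}$ explicit, whereas you partition $R$ first and then pull back via Taylor expansion.
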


\begin{proof}
Let
$A'=\phi(A)$. Since $\phi$ is a diffeomorphism, $|A^\prime|=|A|\sim\delta^{-1}$. Furthermore, since $\phi$ and $\phi^{-1}$ have bounded Jacobians on $F$ and its image respectively, the set $A'$ is $\delta'$-separated for some $\delta'\sim \delta$.

The main issue is to check
(\ref{defnOfOneDimUnrecEqnDiscrete}). Let $R'\subset V$ be a rectangle with side-lengths $0<r_1\leq r_2\leq 1$,
and let $\gamma'$ be the long axis of $R'$ (so that $\gamma'$ is a line segment of length $r_2$). Then $\gamma=\phi^{-1}(\gamma')$ is a $C^2$ curve of length $\sim r_2$. Furthermore, at each point $x\in\gamma$, the curvature of $\gamma$ at $x$ is bounded by some constant $c$. In particular, there exists a constant $c_1>0$ so that the following holds. For each point $z\in\gamma$, apply a translation and rotation so that $z$ is the origin and $\gamma$ is tangent to the $e_1$ direction at $z$. Then in a neighborhood $B(0,c_1)$ of the origin we may write $\gamma$ as the graph of the function $\gamma(t)$, where $|\gamma(t)|\leq ct^2$.
The constants $c$ and $c_1$ depend only on the first and second order derivatives of $\phi$; since $\phi$ is a $C^2$ diffeomorphism, we may choose $c$ and $c_1>0$ independent of $z$ and $\gamma$.
We also have that $\phi^{-1}(R')$ is contained in a $c_2r_1$-neighbourhood of $\gamma$, with, again, $c_2$ independent of the choice of $R'$.

Assume first that the rectangle has large eccentricity, in the sense that $r_2 \geq r_1^{1/2}$. Assume furthermore that $r_1<1$ is small enough relative to $c,c_1,c_2$, since otherwise (\ref{defnOfOneDimUnrecEqnDiscrete}) follows trivially if $C'$ is large enough. We may then cover $\phi^{-1}(R')$ by $O(r_2r_1^{-1/2})$ rectangles $R_i$ of dimensions $10 c_2 r_1\times c_3 r_1^{1/2}$ whose long axes are tangent to $\gamma$.
(The constant $c_3$ depends only on $c,c_1,c_2$, e.g.  we may take $c_3=\min(c_1,(c_2/c)^{1/2})$.)
By \eqref{defnOfOneDimUnrecEqn} applied to each $R_i$, we have
\begin{align*}
|A^{\prime} \cap R' | &\lesssim |A \cap \bigcup_i R_i |\\
&\lesssim \sum_i |A \cap R_i |\\
&\lesssim   r_2r_1^{-1/2}   \cdot r_1^{\kappa}r_1^{(1-\kappa)/2} |A|\\
&=   r_1^{\kappa/2} r_2 |A|\\
&\lesssim   r_1^{\kappa/2} r_2^{1-\kappa/2} |A^{\prime}|.
\end{align*}
If $r_2\leq r_1^{1/2}$, we instead cover $\phi^{-1}(R')$ by $O(1)$ rectangles of dimensions $r_1\times  r_2$, and then a similar calculation shows that
\begin{equation*}
|A^{\prime} \cap R' |\lesssim  r_1^{\kappa}r_2^{1-\kappa} |A^{\prime}|,
\end{equation*}
which is better than required.

\end{proof}
Using Lemma \ref{discreteCtsEquivalenceLem}, we have the following corollary of Proposition \ref{diffeo}:
\begin{cor}\label{diffeoCor}
Let $U,V\subset\rr^2$, and let $\phi:U\to V$ be a $C^2$ diffeomorphism. Let $F\subset U$ be a compact set, and
let $\mathcal{A}\subset F$ be a $(\kappa,C,\delta)$--unrectifiable one-set. Then $\phi(\mathcal A)$ is equivalent to a $(\kappa/2,\tilde C,\delta)$--unrectifiable one-set with $\tilde C\sim C$.
(All implicit constants may depend on $C$, $\phi$ and $F$, but not on $\delta$.)
\end{cor}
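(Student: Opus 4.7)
The plan is to deduce the corollary from Proposition \ref{diffeo} by passing through discretization, using Lemma \ref{discreteCtsEquivalenceLem} in both directions, and then controlling how $\phi$ distorts the underlying $\delta$-ball structure. First, I would write $\mathcal{A}$ as a union of $\delta$-balls and choose a maximal $\delta$-separated set $A$ of centers; by the first half of Lemma \ref{discreteCtsEquivalenceLem}, $A$ is a discrete $(\kappa,C',\delta)$--unrectifiable one-set with $C'\sim C$. Next, I would apply Proposition \ref{diffeo} to $A\subset F$ to conclude that $\phi(A)$ is a discrete $(\kappa/2,C'',\delta')$--unrectifiable one-set with $C''\sim C$ and $\delta'\sim\delta$. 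Applying the converse direction of Lemma \ref{discreteCtsEquivalenceLem}, both $\bigcup_{y\in\phi(A)}B(y,\delta')$ and $\bigcup_{y\in\phi(A)}B(y,2\delta')$ are (continuous) $(\kappa/2,\tilde C,\delta')$--unrectifiable one-sets with $\tilde C\sim C$.

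What remains is to show that $\phi(\mathcal A)$ is sandwiched between two such continuous sets at comparable scales, which is exactly the equivalence defined in the paper just below Definition \ref{def-sets}. For this I would use that $\phi\in C^2$ on the compact set $F$, hence bi-Lipschitz there: there are $0<L_1\leq L_2$ with
\begin{equation*}
L_1|x-y|\leq |\phi(x)-\phi(y)|\leq L_2|x-y|\quad\text{for all }x,y\in F.
\end{equation*}
This gives $B(\phi(x),L_1\delta)\subset \phi(B(x,\delta))\subset B(\phi(x),L_2\delta)$ for every centre $x$ of a ball making up $\mathcal A$. Combining this with the fact that $A$ is a $\delta$-net in the set of centres (so each ball of $\mathcal A$ is within a few $\delta$ of some $a\in A$), and choosing $\delta'$ a small fixed multiple of $\delta$ inside the range allowed by $\delta'\sim\delta$, I would obtain inclusions
\begin{equation*}
\bigcup_{y\in\phi(A)} B(y,c_1\delta')\;\subset\;\phi(\mathcal A)\;\subset\;\bigcup_{y\in\phi(A)} B(y,c_2\delta')
\end{equation*}
for suitable $c_1,c_2\sim 1$. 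After adjusting $\delta'$ by a constant factor so that these two sandwiching sets are of the form covered by Lemma \ref{discreteCtsEquivalenceLem}, the desired equivalence follows directly from the definition.

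The only substantive step is Proposition \ref{diffeo}, which has already been established; everything else is bookkeeping designed to feed into the equivalence relation. The main place where one has to be careful is aligning the scale $\delta'\sim\delta$ produced by Proposition \ref{diffeo} with the bi-Lipschitz constants $L_1,L_2$ so that the sandwich above uses the same $\delta'$ (up to the $\sim 1$ tolerance already built into the equivalence relation). This is routine because the definition of equivalence explicitly absorbs multiplicative constants in both $C$ and $\delta$, so no loss of exponent or of the power $\kappa/2$ occurs in this final step.
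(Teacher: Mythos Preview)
Your proposal is correct and follows exactly the approach the paper indicates: the paper's proof is the single sentence ``Using Lemma \ref{discreteCtsEquivalenceLem}, we have the following corollary of Proposition \ref{diffeo},'' and you have simply spelled out the bookkeeping that this sentence encodes (discretize via Lemma \ref{discreteCtsEquivalenceLem}, apply Proposition \ref{diffeo}, then undo the discretization and sandwich $\phi(\mathcal A)$ using the bi-Lipschitz behaviour of $\phi$ on $F$).
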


\subsection{Proof of Theorem  \ref{boundOnHighMultPtsBallThm}}\label{L2MethodsSection}
Let $C_2$ be a sufficiently large constant (to be chosen later), and let $G$ be a maximal $\delta$--separated subset of the set
\begin{equation*}
 \{a\in B_0 \colon \vis_{\delta}(a;A) < C_2 \delta^{-1} \lambda\}.
\end{equation*}
In order to prove Theorem \ref{boundOnHighMultPtsBallThm}A, it suffices to establish
\begin{equation}\label{discretizedLambdaIneq}
|G|\lessapprox \delta^{-4+2\alpha}\lambda^{2}, 
\end{equation}
while to prove Theorem \ref{boundOnHighMultPtsBallThm}B, we must establish
\begin{equation}\label{discretizedLambdaIneqTwo}
|G|\lessapprox \delta^{-4+2\alpha}\lambda^{2+\epsilon_1}.
\end{equation}

Note that $\vis_{\delta}(a;A)\geq 1$ for all $a\in U$, and thus \eqref{discretizedLambdaIneq} and \eqref{discretizedLambdaIneqTwo} are trivial for $\lambda < \delta/C_2$. Thus in everything that follows we shall assume $\lambda\geq \delta/C_2$. We will also require that $\delta$ be ``sufficiently small,"  meaning that $\delta<\delta_0$ for some $\delta_0$ depending only on $d,c,\kappa,\alpha$ and $C_0$. For $\delta>\delta_0$, Theorem \ref{boundOnHighMultPtsBallThm} holds trivially, provided we select sufficiently large implicit constants in \eqref{boundOnHighMultPtsInBall} and \eqref{boundOnHighMultPtsCtsEps-b}.

\subsubsection{Reduction to a bilinear setting}\label{sec-bilinear}
Recall Lemma \ref{IntervalsAssociatedToPoint}, which associates two intervals $\Theta_{a,1}$ and $\Theta_{a,2}$ to each point $a\in G$. Both $\Theta_{a,1}$ and $\Theta_{a,2}$ are of the form $[\frac{2\pi(i-1)}{k}, \frac{2\pi i}{k}],$ where $1\leq i\leq k$ and $k\sim 1$. Thus, after pigeonholing, we can find a refinement $\tilde G \subset G$ with $|\tilde G| \gtrsim |G|$ so that every point $a\in\tilde G$ has the same $\Theta_{a,1}$ and $\Theta_{a,2}$. Denote these intervals $\Theta_1$ and $\Theta_2$.

Let $a\in \tilde G$. We have
\begin{equation}
\mass(a;\Theta_1)\sim\delta^{-\alpha},\label{boundOnMassOfLinesHittingPt}
\end{equation}
\begin{equation} \label{boundOnNumberLinesHittingPt}
 \begin{split}
|\mathcal{L}_\delta(a;\Theta_1)| \leq \vis_{\delta}(a;A)
\lesssim\delta^{-\alpha}\lambda,
 \end{split}
\end{equation}
where
\begin{equation*}
\mathcal{L}_\delta(a;\Theta_1):=\{\ell\in\mathcal{L}_{\delta}\colon a\in \ell^{2\delta},\ \theta(\ell)\in\Theta_1
\cup(\Theta_1+\pi),\  \ell^{c\delta}\cap A\neq\emptyset\}.
\end{equation*}

Call a point $x\in A$ \important{good} if $x\in \ell^{c\delta}$ for some $\ell\in \mathcal{L}_\delta(a;\Theta_1)$
with $ f_\delta(\ell)>c_1\delta^{1-\alpha}\lambda^{-1}$. Then the number of points of $A\cap\Gamma(a;\Theta_1)$ that are not good is
$O(c_1\delta^{1-\alpha}\lambda^{-1}\delta^{-1}\lambda)= O(c_1\delta^{-\alpha}),$ where the implicit constant in the $O(\cdot)$ notation depends on the implicit constants in \eqref{boundOnMassOfLinesHittingPt} and \eqref{boundOnNumberLinesHittingPt}. We may therefore choose $c_1\sim 1$ small enough so that
\begin{align*}
\delta^{-\alpha}&\lesssim |\{x\in A\cap \Gamma(a;\Theta_1):\ x\text{ is good}\}|
\\
&=\sum_{j:\ c_1\delta^{1-\alpha}\lambda^{-1}\leq 2^j\lesssim \delta^{-1}} |\{x\in A: \ x\in\ell^{c\delta}\text{ for some } \ell\in \mathcal{L}_\delta(a,\Theta_1),
\ f_\delta(\ell)\sim 2^j\}|
\\
&\lesssim \sum_{j:\ c_1\delta^{1-\alpha}\lambda^{-1}\leq 2^j\lesssim \delta^{-1}} 2^j |\{\ell\in \mathcal{L}_\delta(a,\Theta_1),
\ f_\delta(\ell)\sim 2^j\}|.
\end{align*}

 Thus there exists a number $\mu^{(a)}_1\gtrsim \delta^{1-\alpha}\lambda^{-1}$ such that
\begin{equation}\label{defnOfMu}
|\{ \ell\in\mathcal{L}_{\delta}(a,\Theta_1):\  \mu^{(a)}_1<f_{\delta}(\ell) \leq2 \mu^{(a)}_1\}| \gtrapprox \delta^{-\alpha} (\mu^{(a)}_1)^{-1}.
\end{equation}

A similar argument holds for the directions contained in $\Theta_2$. We will call the corresponding quantity $\mu^{(a)}_2$. After a further pigeonholing (entailing a further refinement of $\tilde G$ by a factor of $|\log\delta|^2$), we can assume that every point $a\in \tilde G$ has common values of $\mu_1$ and $\mu_2$.

\subsubsection{Proof of Theorem \ref{boundOnHighMultPtsBallThm}A}
Note that
\begin{equation}\label{mu1Mu2GeqLambda}
 \mu_1,\mu_2\gtrsim\delta^{1-\alpha}\lambda^{-1}.
\end{equation}

Let
\begin{equation*}
\mathcal{H}_1 := \{ \ell\in\mathcal{L}_{\delta}\colon \theta(\ell)\in \Theta_1\cup (\Theta_1+\pi),\ \mu_1< f_{\delta}(\ell) \leq 2 \mu_1 \},
\end{equation*}
and define $\mathcal{H}_2$ similarly, with $\Theta_2$ in place of $\Theta_1$. From \eqref{L2NormOfFndelta} we have
\begin{equation*}
\begin{split}
\norm{f_{\delta}}_2^2 & \geq \frac{1}{|\mathcal{L}_{\delta}|} \sum_{\ell\in\mathcal{H}^{(1)}} f_{\delta}^2(\ell)\\
&\gtrsim \delta^2\mu_1^2|\mathcal{H}_1|.
\end{split}
\end{equation*}
Thus by Lemma \ref{BoundOn2NormOfF}
\begin{equation}\label{upperBoundOnH1}
\begin{split}
|\mathcal{H}_1| & \lesssim \delta^{-2}\mu_1^{-2}\norm{f_{\delta}}_2^2\\
&\lessapprox \delta^{-1-\alpha}\mu_1^{-2},
\end{split}
\end{equation}
and similarly for $\mathcal{H}^{(2)}$. Note that if $\ell_1\in \mathcal{H}_1$ and $\ell_2\in \mathcal{H}_2$, then $\angle(\ell_1,\ell_2)\geq \frac{2\pi}{k}$ (where $k\sim 1$ is the quantity from Lemma \ref{IntervalsAssociatedToPoint}), so $|\ell_1^{3\delta}\cap \ell_2^{3\delta}|\lesssim \delta^2$. Thus
\begin{equation}\label{aBilinearBound}
\begin{split}
\Big\Vert \Big(&\sum_{\ell_1\in\mathcal{H}_1} \chi_{\ell_1^{3\delta}} \Big)\Big(\sum_{\ell_2\in\mathcal{H}_2} \chi_{\ell_2^{3\delta}} \Big)\Big\Vert_{1} = \sum_{\substack{\ell_1\in\mathcal{H}_1\\ \ell_2\in\mathcal{H}_2}}|\ell_1^{3\delta} \cap\ell_2^{3\delta}|\\
 & \lessapprox \delta^2\big(\delta^{-1-\alpha}\mu_1^{-2}\big) \big(\delta^{-1-\alpha}\mu_2^{-2}\big)\\
 & = \delta^{-2\alpha}\mu_1^{-2}\mu_2^{-2}.
\end{split}
 \end{equation}
On the other hand, if $a\in\tilde G,\ \ell_1\in\mathcal{H}_1,\ \ell_2\in\mathcal{H}_2,$ and $a\in\ell_1^{2\delta},\ a\in\ell_2^{2\delta}$, then $|a^\delta\cap\ell_1^{3\delta}\cap\ell_2^{3\delta}|\geq\delta^2$. Since the elements of $\tilde G$ are $\delta$--separated, we thus have
\begin{equation}\label{lowerBoundForL2Norm}
 \begin{split}
  \Big\Vert& \Big(\sum_{\ell_1\in\mathcal{H}_1} \chi_{\ell_1^{3\delta}} \Big)\Big(\sum_{\ell_2\in\mathcal{H}_2} \chi_{\ell_2^{3\delta}} \Big)\Big\Vert_{1} \\
  &\geq \delta^2 \sum_{a\in\tilde G}| \{\ell_1 \in \mathcal{H}_1\colon a\in\ell_1^{2\delta}\}|\ | \{\ell_2 \in \mathcal{H}_2\colon a\in\ell_2^{2\delta}\}|\\
  &\gtrapprox \delta^2 |\tilde G|\ (\delta^{-\alpha}\mu_1^{-1})\ (\delta^{-\alpha}\mu_2^{-1})\\
  & = \delta^{2-2\alpha}|\tilde G|\mu_1^{-1}\mu_2^{-1}.
 \end{split}
\end{equation}
On the second-to-last line, we used \eqref{defnOfMu}. Thus by \eqref{mu1Mu2GeqLambda},
\begin{equation}\label{finalBoundOnTildeG}
 \begin{split}
|\tilde G| & \lesssim \Big(\delta^{-2\alpha}\mu_1^{-2}\mu_2^{-2}\Big)/\Big(\delta^{2-2\alpha}\mu_1^{-1}\mu_2^{-1} \Big)\\
 & \lessapprox \delta^{-2}\mu_1^{-1}\mu_2^{-1} \\
  & \lessapprox \delta^{-2}(\delta^{-1+\alpha}\lambda)(\delta^{-1+\alpha}\lambda) \\
 & \lessapprox \delta^{-4+2\alpha}\lambda^2.
 \end{split}
\end{equation}
%
%

%
%
\subsubsection{Proof of Theorem \ref{boundOnHighMultPtsBallThm}B}
Theorem \ref{boundOnHighMultPtsBallThm}B, is essentially identical, except we will obtain a stronger version of \eqref{aBilinearBound}. In order to do so, we will establish the following lemma:

\begin{lemma}[Not too many low visibility points on a line]\label{notTooManyPtsOnLineThm}
There exist constants $\epsilon_0,\epsilon_1>0$ so that the following holds.
Let $A$ and $B_0$ be as in Theorem  \ref{boundOnHighMultPtsBallThm}B, and let $\ell_0$ be a line. Then for all $\lambda < \delta^{\alpha/2-\epsilon_0}$, we have
\begin{equation}\label{boundOnHighMultPtsCtsEpsOneD}
|\{a\in B_0\cap\ell_0 \colon \vis_{\delta}(a;A) < \delta^{-1}\lambda \}|\lessapprox \delta^{\alpha-1}\lambda^{1+\epsilon_1}.
\end{equation}

The implicit constants in the above inequalities depend on $c,d,C_0,$ and $\kappa,$ but not on $\delta$ or $\lambda$.
\end{lemma}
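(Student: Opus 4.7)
The plan is to argue by contradiction, combining a projective duality with Bourgain's discretized projection theorem (Theorem~\ref{BourgainsDiscreteSumProdThm}). Suppose the conclusion of the lemma fails. Then for some $\lambda < \delta^{\alpha/2-\epsilon_0}$, a maximal $\delta$-separated subset $G$ of $\{a \in B_0 \cap \ell_0 : \vis_\delta(a; A) < \delta^{-1}\lambda\}$ satisfies $|G| \gg \delta^{\alpha-1}\lambda^{1+\epsilon_1}$ (up to logarithmic factors). Via standard dyadic pigeonholing along $\ell_0$, at the cost of only $|\log\delta|^{O(1)}$ factors, I can refine $G$ so that it becomes \emph{well-distributed} in the one-dimensional sense required as input to Bourgain's theorem: for every interval $I \subset \ell_0$ of length $r \geq \delta$, $|G \cap I| \lesssim r^\tau |G|$ for a suitable $\tau > 0$.

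Next, choose a projective transformation $\Phi$ sending $\ell_0$ to the line at infinity. Since $\dist(B_0, A) \gtrsim 1$, $\Phi$ restricts to a $C^2$ diffeomorphism on a neighborhood of $A$, so by Proposition~\ref{diffeo} and Corollary~\ref{diffeoCor} (with the $\alpha<1$ analog from Section~\ref{unrectA}) the image $\mathcal{A}' := \Phi(A^{c\delta})$ is equivalent to a set of the same non-concentration type as $\mathcal{A}$ at scale $\delta' \sim \delta$, with $C' \sim C_0$, and with $\kappa$ halved when $\alpha=1$. Under $\Phi$, each point $a \in \ell_0$ corresponds to a direction $\theta(a) \in S^1$, and the pencil of lines through $a$ becomes the family of parallel lines of direction $\theta(a)$. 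Thus $\vis_\delta(a; A) < \delta^{-1}\lambda$ translates to the statement that $\mathcal{A}'$ is covered by $\lesssim \delta^{-1}\lambda$ parallel $\delta$-tubes of direction $\theta(a)$, which forces the linear projection of $\mathcal{A}'$ perpendicular to $\theta(a)$ to have Lebesgue measure $\lesssim \lambda$. The $\delta$-separation and one-dimensional non-concentration of $G$ propagate through $\Phi$ to give $\Theta := \{\theta(a) : a \in G\}$, a well-distributed subset of a subinterval of $S^1$ with $|\Theta| = |G|$.

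Finally, I apply Bourgain's discretized projection theorem to $\mathcal{A}'$ and $\Theta$. Its output in this context is: there exist $\epsilon_0, \epsilon_1 > 0$ (depending only on $\alpha, \kappa$, and $C_0$) such that any well-distributed direction set $\Theta$ along which every linear projection of $\mathcal{A}'$ has measure less than $\lambda < \delta^{\alpha/2 - \epsilon_0}$ must satisfy $|\Theta| \lessapprox \delta^{\alpha-1}\lambda^{1+\epsilon_1}$. This directly contradicts the lower bound on $|G| = |\Theta|$ coming from the assumed failure of \eqref{boundOnHighMultPtsCtsEpsOneD}, completing the proof. The principal technical obstacle is aligning the non-concentration hypotheses correctly: one must track how the $(\alpha, C_0, \delta)$-set (resp.\ $(\kappa, C_0, \delta)$-unrectifiable one-set) structure of $\mathcal{A}$ transfers through $\Phi$ to $\mathcal{A}'$ with controlled constants, and similarly verify that the dyadically refined $G$ yields a direction set $\Theta$ satisfying the precise one-dimensional non-concentration hypothesis required by Bourgain's theorem. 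The constraint $\lambda < \delta^{\alpha/2 - \epsilon_0}$ is inherited directly from the hypotheses of Bourgain's theorem (specifically, from the Balog--Szemer\'edi--Gowers reduction in its proof), which is also why the same constraint appears in the hypothesis of Theorem~\ref{boundOnHighMultPtsBallThm}B.
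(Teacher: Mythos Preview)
Your overall strategy---contradiction, projective duality sending $\ell_0$ to the line at infinity, then Bourgain---matches the paper. However, there is a genuine gap in your well-distributedness step, and a missing preliminary reduction.

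The sentence ``via standard dyadic pigeonholing along $\ell_0$, at the cost of only $|\log\delta|^{O(1)}$ factors, I can refine $G$ so that it becomes well-distributed'' is not justified. Dyadic pigeonholing on $G$ alone cannot produce the non-concentration estimate $|G\cap I|\lesssim |I|^\tau |G|$: nothing prevents all of $G$ from clustering in a single short interval. The well-distributedness of the vantage points must be \emph{deduced from the structure of $A$} together with the low-visibility hypothesis. In the paper this is the substantive part of the argument: one first pigeonholes to find a multiplicity $\mu\gtrsim\delta^{1-\alpha}\lambda^{-1}$ and a family $\mathcal{H}$ of $\mu$-rich lines so that each $a\in X'$ lies in $\approx\delta^{-\alpha}\mu^{-1}$ tubes from $\mathcal{H}$; then one bounds $|\{\ell\in\mathcal{H}:\ell\cap\ell_0\subset I\}|\lessapprox \mu^{-2}\delta^{-1-\alpha}|I|^{\tilde\kappa}$ via Lemma~\ref{ConeAssociatedToAPointLem} (if $\alpha=1$) or Lemma~\ref{ConeAssociatedToAPointSmallAlphaLem} (if $\alpha<1$), which in turn uses the rectangle estimate \eqref{defnOfOneDimUnrecEqnDiscrete} or the ball estimate \eqref{unconcentratedBallEstimateDiscrete} on $A$. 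Comparing this upper bound with the obvious lower bound coming from $X'\cap I$ yields $|X'\cap I|\lessapprox\mu^{-1}|I|^{\tilde\kappa}$, and only then, using the assumed lower bound on $|X'|$, does one get $(\delta,\tilde\kappa/2,3\epsilon_1/\tilde\kappa)_1$-well-distributedness. This is exactly where the unrectifiability of $A$ enters; without it the argument fails.

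Secondly, you need $\ell_0$ separated from $\conv(A)$, not merely $\dist(B_0,A)\gtrsim 1$, for the projective map $\Phi$ to be a diffeomorphism on a neighborhood of $A$. The paper handles this by a short preliminary reduction (pass to $A\cap S_1$ on one side of $\ell_0^c$), which you should include.
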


Lemma \ref{notTooManyPtsOnLineThm} will be proved in Section \ref{BourgainSumProductMethodsSection}. Now, if $\ell_1\in\mathcal{H}_1$, then each point of $\ell_1^{3\delta} \cap \tilde G$ is hit by $O(\delta^{-1}\lambda)$ lines from $\mathcal{H}_2$ (this is essentially the definition of having low visibility). Thus for each $\ell_1\in\mathcal{H}_1$, we have
\begin{equation}
\begin{split}
\sum_{\ell_2\in\mathcal{H}_2} |\ell_1^{3\delta}\cap\tilde G^{3\delta}\cap\ell_2^{3\delta}| & \lessapprox\delta^2 (\delta^{-1}\lambda)(\delta^{\alpha-2}\lambda^{1+\epsilon_1})\\
& \lesssim \delta^{\alpha-1}\lambda^{2+\epsilon_1}.
\end{split}
\end{equation}
Thus
\begin{equation}\label{aBilinearBoundImproved}
\begin{split}
 \Big\Vert \Big(\sum_{\ell_1\in\mathcal{H}_1} \chi_{\ell_1^{3\delta}\cap \tilde G^{3\delta}} \Big)\Big(\sum_{\ell_2\in\mathcal{H}_2} \chi_{\ell_2^{3\delta}\cap \tilde G^{3\delta}} \Big)\Big\Vert_{1} & \lessapprox |\mathcal{H}_1|\delta^{\alpha-1}\lambda^{2+\epsilon_1}.\\
\end{split}
\end{equation}
A similar statement holds with $\mathcal{H}_1$ and $\mathcal{H}_2$ reversed. Thus we have
\begin{equation}
\begin{split}
  \Big\Vert \Big(\sum_{\ell_1\in\mathcal{H}_1} \chi_{\ell_1^{3\delta}\cap \tilde G^{3\delta}} \Big)\Big(\sum_{\ell_2\in\mathcal{H}_2} \chi_{\ell_2^{3\delta}\cap \tilde G^{3\delta}} \Big)\Big\Vert_{1} & \lessapprox \big(|\mathcal{H}_1|\ |\mathcal{H}_2|\big)^{1/2}\delta^{\alpha-1}\lambda^{2+\epsilon_1}.
\end{split}
\end{equation}
However, the reasoning used to obtain \eqref{lowerBoundForL2Norm} actually shows
\begin{equation}
 \Big\Vert \Big(\sum_{\ell_1\in\mathcal{H}_1} \chi_{\ell_1^{3\delta}\cap \tilde G^{3\delta}} \Big)\Big(\sum_{\ell_2\in\mathcal{H}_2} \chi_{\ell_2^{3\delta}\cap \tilde G^{3\delta}} \Big)\Big\Vert_{1} \gtrapprox  \delta^{2-2\alpha}|\tilde G|\mu_1^{-1}\mu_2^{-1},
\end{equation}
so
\begin{equation}\label{secondBoundOnTildeG}
\begin{split}
 |\tilde G| & \lessapprox\frac{ \big(|\mathcal{H}_1|\ |\mathcal{H}_2|\big)^{1/2}\delta^{\alpha-1}\lambda^{2+\epsilon_1}}{\delta^{2-2\alpha}\mu_1^{-1}\mu_2^{-1}}\\
 &\lessapprox \delta^{-4+2\alpha}\lambda^{2+\epsilon_1}.
\end{split}
\end{equation}
\subsection{Bourgain Sum-Product Methods}\label{BourgainSumProductMethodsSection}
\subsubsection{Reduction to a well-separated case}
In order to obtain Lemma \ref{notTooManyPtsOnLineThm}, it suffices to establish the following lemma.
\begin{lemma}\label{notTooManyPtsOnLineBallThm}
There exist constants $\epsilon_0,\epsilon_1>0$ so that the following holds.
Let $A$ and $B_0$ be as in Theorem  \ref{boundOnHighMultPtsBallThm}B, and let $\ell_0$ be a line such that $\dist(\ell_0,\conv(A))>c$, where $\conv(A)$ is the convex hull of $A$. Then for all $\lambda < \delta^{\alpha/2-\epsilon_0}$, \eqref{boundOnHighMultPtsCtsEpsOneD} holds.
\end{lemma}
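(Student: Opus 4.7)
The plan is to reduce this to Bourgain's discretized projection/sum-product theorem (Theorem~\ref{BourgainsDiscreteSumProdThm}) via a projective transformation, following the heuristic outlined at the start of this section. Let $\Phi$ be a projective transformation of $\rr^2$ that sends $\ell_0$ to the line at infinity. Since $\dist(\ell_0,\conv(A)) > c$ with $c\sim 1$, the restriction of $\Phi$ to a suitable neighborhood of $\conv(A)\cup B_0$ is a $C^2$ diffeomorphism with all derivative and inverse-derivative bounds of order $1$. Under $\Phi$, the radial projection $P_a$ from a point $a\in\ell_0$ corresponds, up to a smooth reparametrization on the image side, to a linear projection $\pi_{\theta(a)}$ in a direction $\theta(a)\in[0,\pi)$, and the map $a\mapsto\theta(a)$ is bi-Lipschitz on $\ell_0\cap B_0$ with constants $\sim 1$. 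By Corollary~\ref{diffeoCor} (or its $\alpha<1$ analogue in Section~\ref{unrectA}), $A' := \Phi(A)$ is equivalent to a discrete $(\alpha,C',\delta')$--set unconcentrated on lines (a $(\kappa/2,C',\delta')$--unrectifiable one-set when $\alpha=1$), with $C'\sim C_0$ and $\delta'\sim\delta$.

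Let $G$ be a maximal $\delta$-separated subset of $\{a\in B_0\cap\ell_0 : \vis_\delta(a;A) < \delta^{-1}\lambda\}$; it suffices to prove $|G|\lessapprox\delta^{\alpha-2}\lambda^{1+\epsilon_1}$. Let $\Theta := \{\theta(a):a\in G\}\subset[0,\pi)$, a $\delta'$-separated set with $|\Theta|=|G|$. The low-visibility bound $\vis_\delta(a;A)<\delta^{-1}\lambda$ combined with the bi-Lipschitz property of $\Phi$ implies that for each $a\in G$ the linear projection $\pi_{\theta(a)}(A')$ can be covered by $O(\delta^{-1}\lambda)$ intervals of length $\delta$. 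Since $\lambda<\delta^{\alpha/2-\epsilon_0}$, each such projection is then smaller than the Marstrand-generic scale $\delta^{\alpha/2-1}$ by a polynomial factor $\delta^{-\epsilon_0}$. The remaining step is to extract from $\Theta$ a subset satisfying the non-concentration ($\beta$-set) hypothesis required by Theorem~\ref{BourgainsDiscreteSumProdThm}: by a standard dyadic pigeonhole argument on how the set $G$ distributes along $\ell_0\cap B_0$ across the scales between $\delta$ and $1$, I pass to $\Theta'\subset\Theta$ with $|\Theta'|\gtrapprox|\Theta|$ that is well-distributed with parameter $\beta\sim\log|\Theta'|/\log(1/\delta)$. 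Applying Theorem~\ref{BourgainsDiscreteSumProdThm} to $A'$ and $\Theta'$ then forces $|\Theta'|\lessapprox\delta^{\alpha-2}\lambda^{1+\epsilon_1}$ for some $\epsilon_1>0$ depending on $\alpha$, $\kappa$ and $\epsilon_0$, and hence the same bound for $|G|$.

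The main obstacle will be the pigeonhole/regularity step that produces a genuinely well-distributed $\Theta'$ matched to the precise hypotheses of Theorem~\ref{BourgainsDiscreteSumProdThm}: identifying the right non-concentration condition on $\Theta'$, verifying that it is inherited through the pigeonholing with only a $|\log\delta|^M$ loss, and ensuring it is compatible with the Balog--Szemer\'edi--Gowers-type reduction at the heart of Bourgain's theorem. A secondary technical point is confirming that the bi-Lipschitz distortion introduced by $\Phi$ is absorbed into the implicit constants rather than into $\epsilon_0$ or $\epsilon_1$; this requires tracking quantitatively how $\pi_{\theta(a)}(A')$ compares to $P_a(A)$ uniformly in $a\in\ell_0\cap B_0$ and $\theta\in\Theta$, which is where the hypothesis $\dist(\ell_0,\conv(A))>c$ is essential.
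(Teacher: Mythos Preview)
Your overall architecture is right --- projective transformation sending $\ell_0$ to infinity, converting low visibility into small linear projections of $A'$, then invoking Theorem~\ref{BourgainsDiscreteSumProdThm} --- and this is exactly the paper's route. The gap is in how you obtain the non-concentration hypothesis on the set of directions.

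You propose to extract a well-distributed $\Theta'\subset\Theta$ with $|\Theta'|\gtrapprox|\Theta|$ by ``a standard dyadic pigeonhole argument on how $G$ distributes along $\ell_0\cap B_0$.'' No such argument exists in general: if $G$ happened to be concentrated in a single interval of length $\delta^{1/2}$, say, then no refinement of $\Theta$ could be $(\delta,\kappa,\tau)_1$--well distributed with a fixed $\kappa>0$ while retaining $\gtrapprox|\Theta|$ points. Your fallback, taking the non-concentration exponent to be $\beta\sim\log|\Theta'|/\log(1/\delta)$, makes $\beta$ depend on the very quantity you are trying to bound; since the $\eta$ and $\tau_0$ in Theorem~\ref{BourgainsDiscreteSumProdThm} depend on $\kappa$, you cannot close the argument with a uniform $\epsilon_1$.

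The point you are missing is that the well-distributedness of $X$ is \emph{not} a generic feature to be pigeonholed into existence; it is a consequence of the unrectifiability of $A$ together with the assumption that $|X|$ is large. The paper argues by contradiction: assume $|X|>\delta^{\alpha-1}\lambda^{1+\epsilon_1}$, pigeonhole a multiplicity $\mu\gtrsim\delta^{1-\alpha}\lambda^{-1}$ and a refinement $X'\subset X$ so that each $x\in X'$ lies in $\approx\delta^{-\alpha}\mu^{-1}$ lines $\ell$ with $f_\delta(\ell)\sim\mu$, and then shows directly that for any interval $I\subset\ell_0$,
\[
|X'\cap I|\lessapprox \mu^{-1}|I|^{\tilde\kappa},
\]
where $\tilde\kappa=\kappa$ if $\alpha=1$ and $\tilde\kappa=1-\alpha$ if $\alpha<1$. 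This bound comes from counting: the $\mu$-rich lines through $X'\cap I$ point into a cone of aperture $\lesssim|I|$ from each $a\in A$, and Lemmas~\ref{ConeAssociatedToAPointLem}--\ref{ConeAssociatedToAPointSmallAlphaLem} (which encode the rectangle/ball non-concentration of $A$) bound the number of such lines. Combined with the lower bound $|X'|\gtrapprox\delta^{\epsilon_1}\mu^{-1}$, this yields $|X'\cap I|/|X'|\lessapprox\delta^{-\epsilon_1}|I|^{\tilde\kappa}$, which is the required well-distributedness with a \emph{fixed} exponent $\tilde\kappa/2$ on intervals of length at most $\delta^{3\epsilon_1/\tilde\kappa}$. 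Only then does the projective transformation and Bourgain's theorem finish the job.
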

To deduce Lemma \ref{notTooManyPtsOnLineThm} from Lemma \ref{notTooManyPtsOnLineBallThm}, let $\ell_0$ be the line from Lemma \ref{notTooManyPtsOnLineThm}. Then for $c>0$ sufficiently small, $|A\cap \ell_0^c|<|A|/2$. Let $\RR^2\backslash\ell_0^c = S_1\cup S_2$, with $S_1,S_2$ connected (and convex). Without loss of generality, we can assume $|A\cap S_1| \geq |A|/4$. Note that $A\cap S_1$ is either a $(\alpha,4C_0,\delta)$--set that is unconcentrated on lines (if $\alpha<1$) or a $(\kappa,4C_0,\delta)$--unrectifiable one-set (if $\alpha=1$), and furthermore $\vis_{\delta}(a; A\cap S_1)\leq \vis_{\delta}(a; A).$ Thus we can apply Lemma \ref{notTooManyPtsOnLineBallThm} to the set $A\cap S_1$ to obtain Lemma \ref{notTooManyPtsOnLineThm}.

We will now prove Lemma \ref{notTooManyPtsOnLineBallThm}.
Let $ A,\ B_0,$ and $\ell_0$ be as in the statement of Lemma \ref{notTooManyPtsOnLineBallThm}. Let
\begin{equation}
X := \{ a\in \ell_0\cap B_0\colon \vis_{\delta}(a; A)<\delta^{-1}\lambda\}.
\end{equation}
Our goal is to show that $|X|$ is small. Suppose that for some $\epsilon_1>0$, we have
\begin{equation}\label{smallLineVisibilityAssumption}
|X|>\delta^{\alpha-1}\lambda^{1+\epsilon_1}.
\end{equation}
We will show that this contradicts Bourgain's discretized sum-product theorem if $\epsilon_1$ is too small.

\subsubsection{$X$ is well-distributed}
We will first show that if $|X|$ is sufficiently large, then it must also be well-distributed in an appropriate sense.

\begin{definition} (cf. \cite[Theorem 2]{Bourgain1})
Let $\nu$ be a probability measure on $\RR$ or $S^1$ and let $\kappa,\tau>0$. We say that $\nu$ is $(\delta,\kappa,\tau)_1$--\emph{well distributed} if we have the estimate
\begin{equation}
\nu(I) \leq |I|^{\kappa}
\end{equation}
whenever $I$ is an interval with $\delta< |I| < \delta^{\tau}$.
\end{definition}

In the following discussion, we will think of $\tau$ as being fixed (but small), and $\delta$ going to 0. Thus the implicit constants in our theorems will be allowed to depend of $\tau$, but not on $\delta$.
In order to unify the discussion of the $\alpha=1$ and $\alpha<1$ cases, we will define
\begin{equation}
 \tilde\kappa = \left\{\begin{array}{ll}
 \kappa,& \alpha=1,\\
1-\alpha,&\alpha<1.
\end{array} \right.
\end{equation}

\begin{lemma}
Let $\mathcal A,\ B_0,\ \ell_0$ and $X$ be as above. Suppose that for some $\epsilon_1>0$, \eqref{smallLineVisibilityAssumption} holds. Then $X$ supports a $(\delta,\frac{\tilde\kappa}{2},\frac{3\epsilon_1}{\tilde\kappa})_1$--well distributed probability measure $\nu_1$.
\end{lemma}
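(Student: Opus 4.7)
The plan is to construct $\nu_1$ via a discretized Frostman/Katz--Tao-style argument, producing $\nu_1$ as the uniform probability measure on a dyadically regular subset $\tilde X \subset X$ obtained through an iterated pigeonhole/capping construction.

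First I would extract an effective dimension of $X$. Substituting $\lambda < \delta^{\alpha/2-\epsilon_0}$ into the hypothesis $|X| > \delta^{\alpha-1}\lambda^{1+\epsilon_1}$ gives, in the worst case, the lower bound $|X| > \delta^e$ with
\[
e \;=\; (\alpha-1) + \left(\tfrac{\alpha}{2}-\epsilon_0\right)(1+\epsilon_1).
\]
A short calculation shows that the effective dimension $1-e$ of $X$ strictly exceeds the target Frostman exponent $\tilde\kappa/2$ with a definite positive slack once $\epsilon_0, \epsilon_1$ are small: for $\alpha=1$ the slack is roughly $1/2 - \kappa/2 \geq 1/4$ (using $\kappa \leq 1/2$), and for $\alpha<1$ the slack is roughly $(3-2\alpha)/2 > 0$.

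Next I would produce $\tilde X \subset X$ through an iterated dyadic pigeonhole across the $O(\log(1/\delta))$ dyadic scales $r_j = 2^{-j} \in [\delta, \delta^\tau]$, with $\tau := 3\epsilon_1/\tilde\kappa$. Working with a maximal $\delta$-separated subset $\bar X \subset X$ of cardinality $|\bar X| \gtrsim \delta^{e-1}$, one caps the multiplicity of $\bar X$ in dyadic $r_j$-intervals at each scale so as to enforce the Frostman-type bound $|\tilde X \cap I| \lessapprox |\tilde X|\,r_j^{1-e}$; the size bound $|\bar X| \gtrsim \delta^{e-1}$ guarantees that the iterated capping discards only a polylogarithmic fraction of points, so $|\tilde X| \gtrapprox |\bar X|$. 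Setting $\nu_1 := |\tilde X|^{-1} \sum_{x \in \tilde X}\delta_x$, one then obtains $\nu_1(I) \lessapprox |I|^{1-e}$ for any interval $I$ in the required scale range. Since $1-e > \tilde\kappa/2$ with a definite slack, and the choice $\tau = 3\epsilon_1/\tilde\kappa$ is tuned so that $\tau \cdot (1 - e - \tilde\kappa/2)$ dominates the polylogarithmic losses uniformly over $|I| \leq \delta^\tau$, this yields $\nu_1(I) \leq |I|^{\tilde\kappa/2}$ as required.

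The main obstacle is the capping/pigeonhole construction itself: one must enforce the Frostman-type bound simultaneously at all $O(\log(1/\delta))$ dyadic scales while losing only polylogarithmic factors in $|\tilde X|$. This is a standard but bookkeeping-intensive construction in discretized combinatorial geometry (in the spirit of Katz--Tao and Bourgain). Secondarily, the specific choice $\tau = 3\epsilon_1/\tilde\kappa$ must be verified to be compatible both with the slack in the Frostman exponent and with the exponents needed downstream in the application of Bourgain's discretized sum-product theorem; this is precisely the role this particular $\tau$ plays in the statement of the lemma.
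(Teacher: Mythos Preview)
Your approach has a genuine gap: you are trying to deduce the well-distributedness of $X$ purely from the size lower bound $|X|>\delta^{\alpha-1}\lambda^{1+\epsilon_1}$, but no such deduction is possible. A large subset of a line can be arbitrarily concentrated. Concretely, nothing in the hypotheses rules out the possibility that $X$ is (essentially) a single interval of length $|X|$; in that case any probability measure $\nu_1$ supported on $X$ satisfies $\nu_1(I)=1$ for $I$ that interval, while $|I|^{\tilde\kappa/2}$ is a small power of $\delta$. Your capping construction would then discard almost all of $\bar X$, not a polylogarithmic fraction, so the claim that ``the size bound $|\bar X|\gtrsim\delta^{e-1}$ guarantees that the iterated capping discards only a polylogarithmic fraction of points'' is false in general. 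The Katz--Tao/Frostman machinery lets you refine a set to be regular, but the price is that the surviving set may be far smaller if the original was concentrated; it does not manufacture a Frostman exponent out of a cardinality bound alone.

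The paper's proof works precisely because it does \emph{not} rely only on $|X|$. Instead it exploits the structure of $A$: after a dyadic pigeonholing to a common multiplicity $\mu$ and a refinement $X'\subset X$, each point of $X'$ is incident to $\approx\delta^{-\alpha}\mu^{-1}$ lines of a family $\mathcal{H}$ of $\mu$-rich lines. A double count then bounds $|X'\cap I|$ in terms of the number of lines in $\mathcal{H}$ passing through $I$, and that number is controlled by Lemma~\ref{ConeAssociatedToAPointLem} (or Lemma~\ref{ConeAssociatedToAPointSmallAlphaLem} when $\alpha<1$), which encodes the unrectifiability of $A$ and yields the $|I|^{\tilde\kappa}$ gain. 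Taking $\nu_1$ to be normalized Lebesgue measure on $X'$ then gives $\nu_1(I)\lessapprox\delta^{-\epsilon_1}|I|^{\tilde\kappa}$, and the choice $\tau=3\epsilon_1/\tilde\kappa$ converts this into $\nu_1(I)\le|I|^{\tilde\kappa/2}$ for $|I|<\delta^\tau$. In short, the $\tilde\kappa$ in the conclusion comes from the geometry of $A$, not from the size of $X$; your argument never invokes that geometry and therefore cannot succeed.
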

\begin{proof}
We will use repeatedly the following geometric fact: if $A,\ B_0,\ \ell_0$ are as in Lemma \ref{notTooManyPtsOnLineBallThm}, and $\ell$ is a line such that $\ell^\delta$ intersects both $\ell_0\cap B(0,d)$ and $A$, then $\ell$ makes an angle $\sim 1$ with $\ell_0$.

After dyadic pigeonholing as in the proof of \eqref{defnOfMu}, we can assume there exists a number $\mu$ with
\begin{equation}\label{sizeOfMu}
\mu \geq \delta^{1-\alpha}\lambda^{-1},
\end{equation}
so that if we define
\begin{equation}
 \mathcal H := \{ \ell\in\mathcal{L}_{\delta}\colon f_{\delta}(\ell) \sim \mu \},
\end{equation}
then there exists a refinement $X^\prime\subset X$ with $|X^\prime|\gtrapprox|X|$ so that for all $x\in X^\prime,$
\begin{equation}\label{linesHittingPta}
|\{\ell\in\mathcal H \colon x\in \ell^{2\delta}\}| \approx \delta^{-\alpha}\mu^{-1}.
\end{equation}
We will prove that if $\nu_1$ is the probability measure on $X$ given by
\begin{equation}\label{defofnu1}
\nu_1(S) = |S\cap X'|/|X'|,
\end{equation}
then $\nu_1$ is $(\delta,\frac{\tilde\kappa}{2}, \frac{2\epsilon_1}{\tilde\kappa})_1$--well distributed.

Note first that by \eqref{smallLineVisibilityAssumption}, \eqref{sizeOfMu}, and the assumption that $\lambda\gtrsim\delta$,
we have
\begin{equation}\label{lowerBoundOnX}
|X'|\gtrapprox |X|>\delta^{\alpha-1}\lambda^{1+\epsilon_1}\geq \lambda^{\epsilon_1}\mu^{-1}\gtrsim \delta^{\epsilon_1}\mu^{-1}.
\end{equation}

Let $I\subset\ell_0$ be an interval with $|I|\geq\delta$.
If $\ell\in\mathcal{H}$ and $\ell\cap \ell_0\subset I$, then $|\ell^{2\delta}\cap I|\sim\delta$. We can therefore cover $I\cap X'$ by finitely overlapping intervals of length about $\delta$ so that the number of such intervals is about $\delta^{-1}|I\cap X'|$. By \eqref{linesHittingPta}, each such interval is intersected by at most $\approx \delta^{-\alpha}\mu^{-1}$ lines of $\mathcal{H}$. Hence
\begin{equation}\label{lowerBoundOnPtsHittingInterval}
|\{\ell\in\mathcal H \colon \ell\cap \ell_0\subset I\}| \gtrapprox \delta^{-1-\alpha}\mu^{-1}|X' \cap I|.
\end{equation}
To bound the quantity on the left, we will need a lemma.

\begin{lemma}
Let $I\subset \ell_0\cap B_0$ be an interval with $|I|\geq\delta$. Then
\begin{equation}\label{boundOnNumberOfLinesHittingInterval}
  |\{\ell\in\mathcal H \colon \ell\cap \ell_0\subset I\}| \lessapprox \mu^{-2}\delta^{-1-\alpha} |I|^{\tilde\kappa}.
\end{equation}
\end{lemma}
\begin{proof}
Let
\begin{equation}
 \mathcal Q := \{(a,a^\prime,\ell)\in A\times A\times\mathcal H \colon a,a^\prime \in \ell^{c\delta},\ \ell\cap\ell_0\subset I\}.
\end{equation}
For $a\in A$, let $\Theta_a$ be the $\delta$ neighborhood of the arc $\{ \frac{a-a^\prime}{|a-a^\prime|}   :\ a^\prime\in I \}
\subset S^1$. Since $|I|\geq\delta$ and $\dist(a,I)\lesssim 1$, we have $|\Theta_a|\lesssim |I|$. Then,
\begin{equation}
 \begin{split}
  |\mathcal Q| & \leq \sum_{a\in A}|(a^\prime,\ell)\in A\times\mathcal H\colon a\neq a^\prime,\ a,a^\prime\in\ell^{c\delta},\ \theta(\ell)\in\Theta_a\} |\\
  & \lessapprox \sum_{a\in A}|\Theta_a|^{\tilde\kappa} \delta^{-1}\\
  & \lesssim |I|^{\tilde\kappa}\delta^{-1-\alpha},
 \end{split}
\end{equation}
where on the second line we used either Lemma \ref{ConeAssociatedToAPointLem} or Lemma \ref{ConeAssociatedToAPointSmallAlphaLem}, depending on whether $\alpha=1$ or $\alpha<1$. Since each $\ell \in \mathcal{H}$ meets $\sim\mu$ points in $A$ and therefore enters $\mu^2$ triples in $\mathcal{Q}$, this proves
\eqref{boundOnNumberOfLinesHittingInterval}.
\end{proof}

Combining \eqref{lowerBoundOnPtsHittingInterval} and \eqref{boundOnNumberOfLinesHittingInterval}, we have
\begin{equation}
 |X'\cap I| \lessapprox \mu^{-1}|I|^{\tilde\kappa}.
\end{equation}
Combining this with \eqref{lowerBoundOnX}, we see that if $|I| < \delta^{3\epsilon_1/\tilde\kappa}$, then
\begin{equation}
\frac{|X'\cap I|}{|X'|}  \lessapprox \delta^{-\epsilon_1}|I|^{\tilde\kappa} \leq \delta^{\epsilon_1/\tilde\kappa} |I|^{\tilde\kappa/2}.
\end{equation}
This proves that $\nu_1$ defined in \eqref{defofnu1} is well distributed as claimed.


\subsubsection{A projective transformation}\label{sec-projective}
In this subsection we will describe the projective transformation $T:\rr^2\setminus\ell_0\to\rr^2$ which ``takes the line $\ell_0$ to the line at infinity'' in the sense that if $a\in\ell_0$, then the $T$-image of the family of lines passing through $a$ is the family of parallel lines pointing in some direction $\theta_a$. We will also demonstrate that this mapping of lines is highly regular, both as a function of $a$ and as a function of the direction of the line. Ultimately, the goal is to show that this transformation must send any counterexample to Theorem \ref{boundOnHighMultPtsBallThm}
to a counterexample to Bourgain's discretized sum-product theorem (Theorem \ref{BourgainsDiscreteSumProdThm}).

Without loss of generality, we may assume that  $\ell_0$ is the $x$-axis, $B_0\cap\ell_0$ is contained in $[-10,0]\times\{0\}$, and $A\subset[1,20]^2$. (We can reduce to this case by partitioning $A$ into finitely many pieces, applying affine transformations, and noting that such transformations preserve the property that $A$ is either a discrete $(\alpha,C_0,\delta)$--set that is unconcentrated on lines (if $\alpha<1$) or a discrete $(\kappa,C_0,\delta)$--unrectifiable one-set (if $\alpha=1$). We may increase the value of $C_0$ by a constant factor if necessary. Define
\begin{equation}\label{protran-def}
 T(x,y) :=  \Big( \frac{x+1}{y}, \frac{y+1}{y}\Big).
\end{equation}
Since $T$ maps lines to lines, we may verify that after a refinement, $T(A)$ is a discrete $(\alpha,C_0,\delta)$--set that is unconcentrated on lines (at this point, this is the only property we will need, even if $\alpha=1$).

To see what $T$ does to lines, note that
\begin{equation}\label{t-lines}
T(x+t,mt)=(\frac1{m},1)+\frac1{mt}(x+1,1)
\end{equation}
This has two consequences of interest. First, fix a point $x\in [-10,0]$. Then $\vis_\delta((x,0),A)$ counts the number of lines through $(x,0)$ with $\delta$-separated slopes whose $\delta$--neighborhoods meet $A$. The mapping $T$ takes lines $\ell$ through $(x,0)$ to lines in the direction $\theta_x:=\operatorname{arccot}(x+1)$. Moreover, if $\ell,\ell'$ are two such lines with slopes $m,m'$ respectively, then their images pass through  $(\frac1{m},1)$ and $(\frac1{m'},1)$. We have placed $x$ and $A$ so that $1/40<m<30$ for all lines $\ell$ whose $\delta$--neighborhoods meet both $A$ and $(x,0)$. Hence, the distance between $T(\ell)$ and $T(\ell')$ is proportional to the acute angle between $\ell$ and $\ell'$. It follows that
\begin{equation}\label{visibilityComparable}
 \vis_{\delta} (x;A)\sim \delta^{-1}|\pi_{\theta_x} (T(A^{\delta}))|,
\end{equation}
where $A^{\delta}$ is the $\delta$--neighborhood of $A$. The same estimate holds if we replace $A^{\delta}$ by $\mathcal{A}_1$, where $\mathcal{A}_1$ is a union of $\delta$--squares centered at the points of $A$. Note that since $A$ satisfies \eqref{unconcentratedBallEstimateDiscrete}, $\mathcal{A}_1$ satisfies $\eqref{unconcentratedBallEstimate}$ for some constant $C^\prime$ that is comparable to $C$, and $|\mathcal{A}_1|\sim \delta^2|\mathcal{A}|\sim\delta^\alpha$.

Furthermore, the map $x\to\theta_x$ has the property that if $x,x^\prime\in[-10,0]$, then
\begin{equation}\label{distanceComparableAngularDistance}
  |\theta_x-\theta_{x^\prime}|\sim|x-x^\prime|.
\end{equation}
Let $\nu$ be the push-forward measure of $\nu_1$ (so that $\nu$ is supported on $S^1$).
Then by \eqref{distanceComparableAngularDistance}, $\nu$ is $(\delta,\frac{\tilde\kappa}{4},\frac{3\epsilon_1}{\tilde\kappa})_1$--well distributed.
%

In light of \eqref{visibilityComparable}, our low visibility assumption implies that
for all $\theta \in \supp\nu,$ we have
 \begin{equation}\label{smallProjections}
 \begin{split}
 |\pi_\theta(\mathcal{A}_1)| 
 & \lesssim \delta^{\alpha/2-3\epsilon_0}.
 \end{split}
 \end{equation}
 We will now proceed to obtain a contradiction.


\subsubsection{Bourgain's sum-product theorem, and a contradiction}

We shall now state a version of Bourgain's discretized sum-product theorem.

\begin{theorem}[Bourgain, \cite{Bourgain1}, Theorem 3]\label{BourgainsDiscreteSumProdThm}
Given $0<\alpha<2,$ $\beta>0$, and $\kappa>0$, there exists $\tau_0>0$ and $\eta>\alpha/2$ such that the following holds for all $\delta>0$ sufficiently small.

Let $\mu_1$ be a $(\delta,\kappa, \tau_0)_1$--well distributed probability measure on $S^1$. Let $\mathcal A \subset[1,2]^2$ be a union of $\delta$--squares with the property that
\begin{equation}
|\mathcal{A}|\sim \delta^{2-\alpha},
\end{equation}
and
\begin{equation}\label{BourgainDistributednessProperty}
 |\mathcal A \cap B| \leq \rho^{\beta}|\mathcal{A}|
\end{equation}
whenever $B$ is a ball of radius $\rho$ with $\delta<\rho<1$.

Then there exists $\theta\in\supp\mu_1$ such that
\begin{equation}\label{bourgainLargeProjection}
|\pi_\theta(\mathcal A)|>\delta^{1-\eta}.
\end{equation}
\end{theorem}

In the statement of Theorem \ref{BourgainsDiscreteSumProdThm} in \cite{Bourgain1}, Bourgain has the more restrictive requirement that $\mu_1$ be a $(\delta,\kappa, 0)_1$--well distributed probability measure on $S^1$ (i.e.~that the well-distributedness property hold for all intervals, not just those of length at most $\delta^{\tau_0}$). However, the remark on page 221 of \cite{Bourgain1} observes that the proof of Theorem \ref{BourgainsDiscreteSumProdThm} only requires $\mu_1$ to be $(\delta,\kappa, \tau_0)_1$--well distributed. 

We will apply Theorem \ref{BourgainsDiscreteSumProdThm} with $\mathcal A_1$ in place of $\mathcal A$, $\nu$ in place of $\mu_1$, $\tilde\kappa/4$ in place of $\kappa$, $\delta^\prime$ in place of $\delta$, $\alpha$ as specified in the statement of Theorem \ref{boundOnHighMultPtsThm}, and $\beta=\alpha$. Select $\epsilon_0$ and $\epsilon_1$ sufficiently small, so that $4\epsilon_0<\eta-\alpha/2$ and $\frac{3\epsilon_1}{\kappa}\leq\tau_0$. By \eqref{bourgainLargeProjection}, there exists $\theta\in\supp\nu$ with
\begin{equation}
\begin{split}
 |\pi_{\theta}(\mathcal A_1)|& \geq\delta^{1/2 - (\eta-\alpha/2)}\\
 &\geq \delta^{1/2 - 4\epsilon_0}.
\end{split}
 \end{equation}
This contradicts \eqref{smallProjections}. Therefore, \eqref{smallLineVisibilityAssumption} cannot hold.\end{proof}


\section{Pointwise upper bound}\label{upper}
We first establish some tools and terminology for self-similar sets. In this section, $\mathcal{J}$  will be a one-dimensional self-similar set with no rotations and with equal contraction ratios, i.e.~$\mathcal{J}$ satisfies \eqref{defnOfJSelfSim} where for each $i=1,\ldots,s$ we have $\mathcal{O}_i = I$ and $\lambda_i = \frac{1}{s}$. Without loss of generality we can assume that $\diam\mathcal{J}\sim 1$. We also fix $a=0$.

Let $W_n=\{1,...,s\}^n$ be the set of all words of length $n$ in the alphabet $\{1,...,s\}$, and $W=\bigcup_{n=0}^\infty W_n$, where $W_0$ consists of the empty word. For $w=(w_1,...,w_n)\in W_n$, let
\begin{equation*}
\begin{split}
T_w&:=T_{w_n}\circ\dots\circ T_{w_1},\\
\mathcal{Q}_n&:=\{T_w(\mathcal{J}_0):w\in W_n\}.
\end{split}
\end{equation*}
We refer to the $Q\in\mathcal{Q}_n$ as $\important{disks at stage }n$, each associated with a word $w=w(Q)\in W_n$ so that $T_w(\mathcal{J}_0)=Q$.

For $w=(w_1,...,w_n)$ and $w'=(w_{n+1},...,w_{n+m})$, let $ww':=(w_1,...,w_{n+m})$. If $Q$ and $Q'$ are associated with the words $w$ and $ww'$ respectively, we say that $Q'$ is a \important{descendant of} $Q$ \important{of generation} $n+m$, and $Q$ is an \important{ancestor of} $Q'$ \important{of generation} $n$. In particular, $Q'\subseteq Q$.

We also employ the notations $Q'>_m Q$, $Q<_m Q'$.
We will sometimes refer to descendants  and ancestors with $m=1$ as children and parents respectively.
Finally, we write $Q\prec Q'$ and $w\prec w'$ if the word $w$ associated to $Q$ is a sub-word of the word $w'$ associated to $Q'$, that is, $w'=vwv'$ for some $v,v'\in W$.

Figures \ref{greenL}, \ref{greenM}, and \ref{greenN} should be kept in mind whenever $<$, $\prec$ are invoked.

\begin{figure}[htbp]\centering\includegraphics[width=5.in]{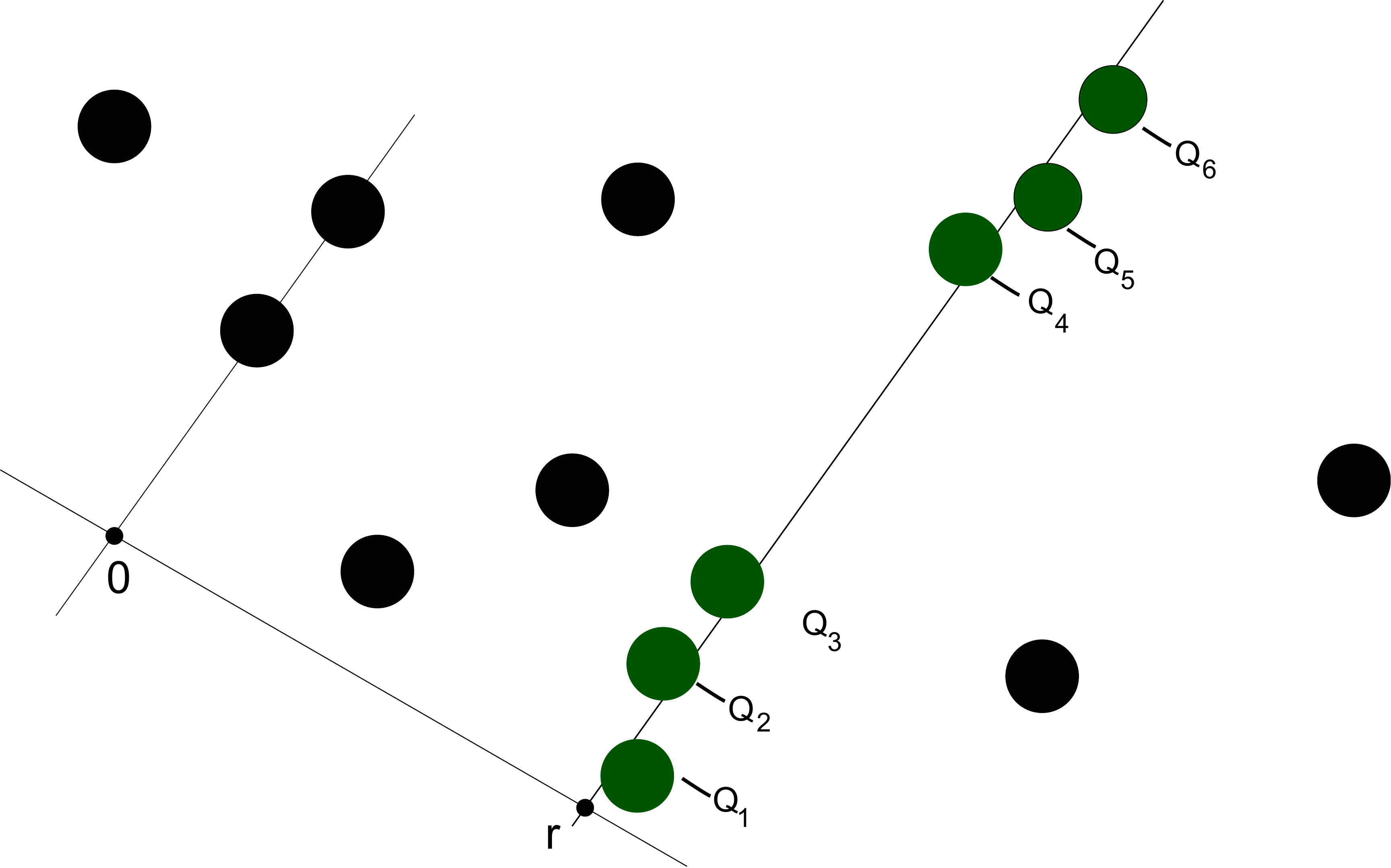}\caption{The disks $Q_1,...,Q_6\in \mathcal{Q}_L$ form a tall stack.}\label{greenL}\end{figure}

\begin{figure}[htbp]\centering\includegraphics[width=5.in]{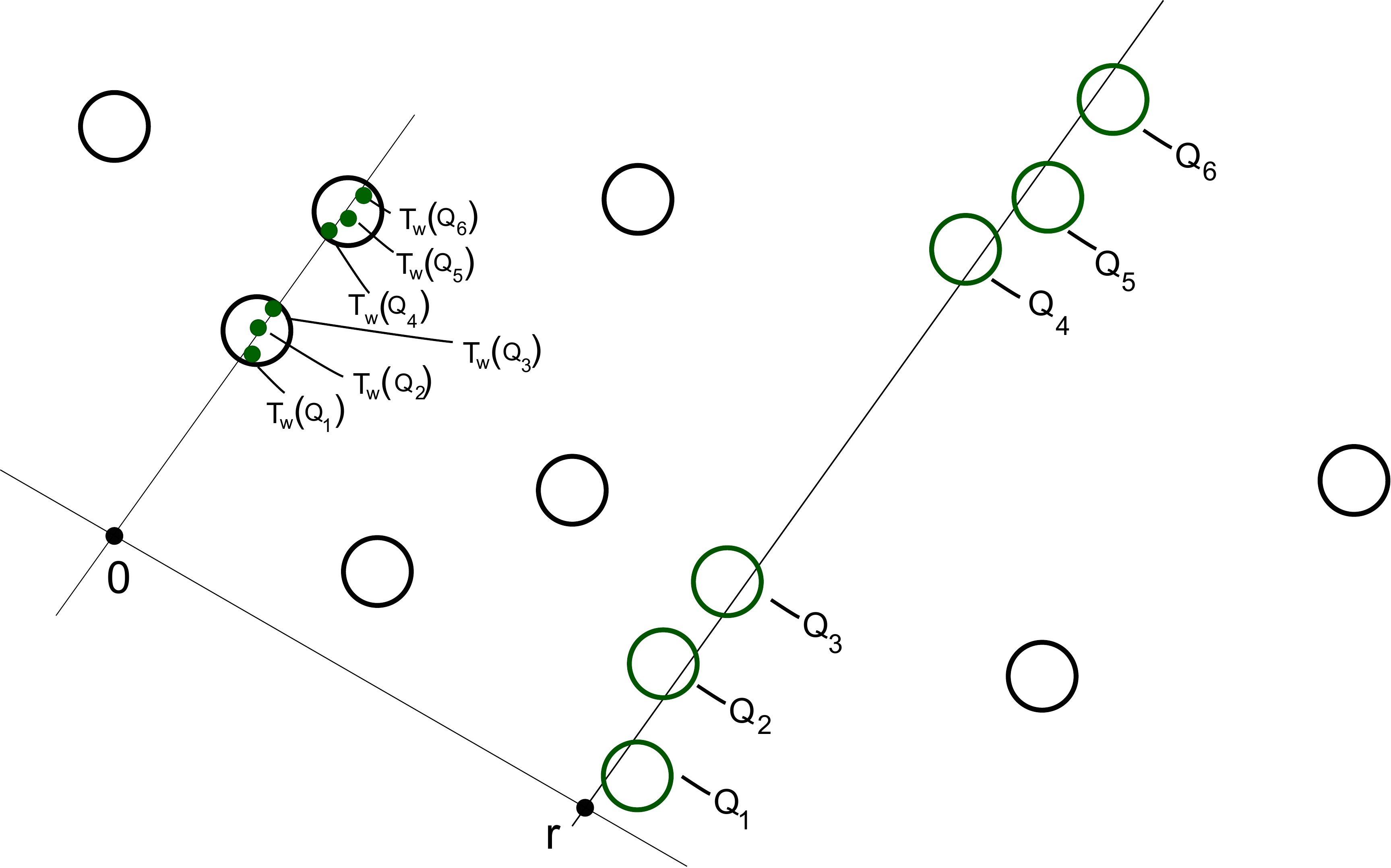}\caption{Continuing from Figure \ref{greenL}, this is a rough partial sketch of $\mathcal{J}_M$, $M>L$. $T_w(Q_1),...,T_w(Q_6)$ are singled out, for some $w\in W_{M-L}$.}\label{greenM}\end{figure}

\begin{figure}[htbp]\centering\includegraphics[width=3.in]{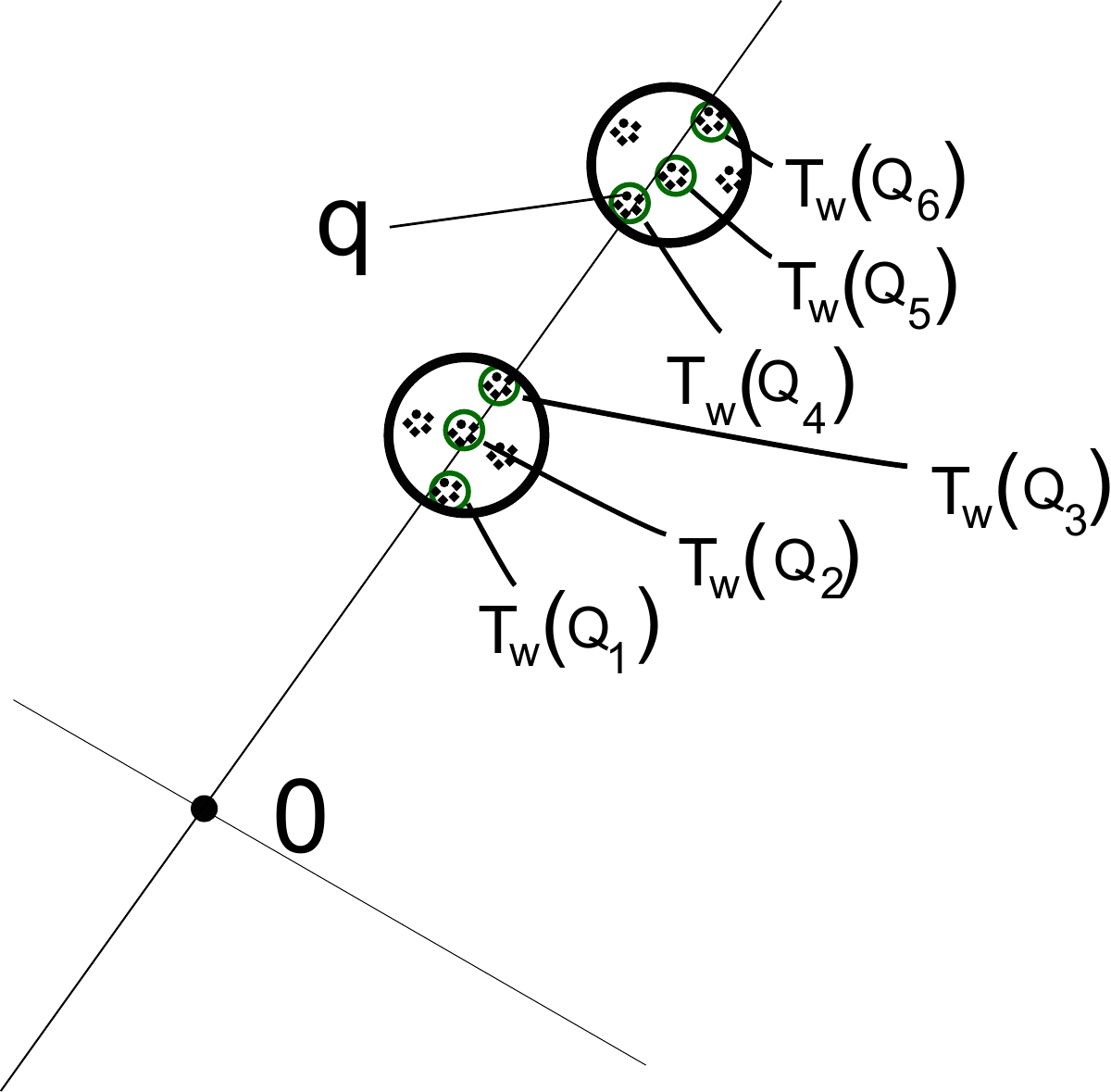}\caption{A closer look at a smaller portion of $\mathcal{J}_n$, $n>M>L$. Since $T_w(Q_4)<q$, it follows that $Q_4\prec q$. In words, ``$q$ is a descendant of a self-similar copy of $Q_4$.''}\label{greenN}\end{figure}

The \important{projection counting function}, $f_{n,\theta}:\rr\to\nn$ (analogous to $f_\delta(\ell)$ from Section \ref{defnOfFSection}), is
\begin{equation*}
f_{n,\theta}:=\sum_{Q\in\mathcal{Q}_n} \chi_{\pi_\theta(Q)}.
\end{equation*}
Large values of $f_{n,\theta}$ indicate large concentrations of $L^1$ mass on a small set, so that the support of $f_{n,\theta}$ cannot be too large; note that $\text{supp}f_{n,\theta}=\pi_\theta{\mathcal{J}_n}$. The former statement is quantified using the Hardy-Littlewood operator $M$ and self-similarity. The advantage in applying the Hardy-Littlewood operator is that, while $f_{n,\theta}(r)$ need not be increasing in $n$ as $\theta$ and $r$ are fixed, $Mf_{n,\theta}$ is much better behaved, in a way that we now quantify. Practically, we may treat it as nondecreasing in $n$.

\begin{lemma}\label{HL-1}
Let $n,N\in\nn$ with $n\leq N$. If $f_{n,\theta}(r)\geq K$, then $Mf_{N,\theta}(r')\geq\frac{K}{2}$ whenever $r,r'$ belong to the same $Q\in\mathcal{Q}_n$.
\begin{proof}The Hardy-Littlewood estimate is obtained using the interval $I=[r-|\pi_\theta(\mathcal{J}_0)| s^{-n},r+|\pi_\theta(\mathcal{J}_0)| s^{-n}]$. This interval contains at least $K$ projected disks $\pi_\theta(Q_i)$, where $Q_i\in\mathcal{Q}_n$. By induction on $m$, it can be shown that for each $i$,
\begin{equation*}
\int_I\sum_{Q'>_m Q_i} \chi_{\pi_\theta(Q')}(r)\, dr = \int_I \chi_{\pi_\theta(Q_i)}(r)\, dr\geq\frac{|I|}{2}.
\end{equation*}
Applying this to each $Q_i$ when $m=N-n$, summing over $i$, and dividing by $|I|$ establishes the claim.
\end{proof}
\end{lemma}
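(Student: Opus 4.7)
The plan is to produce a single interval $I$ which is admissible in the definition of $Mf_{N,\theta}(r')$ (i.e.\ contains $r'$) and on which the average of $f_{N,\theta}$ is at least $K/2$. The geometric input is that, under the present hypotheses (no rotations, equal contraction ratios $s^{-1}$), every $Q\in\mathcal{Q}_n$ has $\pi_\theta(Q)$ of the same length $\ell := s^{-n}|\pi_\theta(\mathcal{J}_0)|$, and its stage-$N$ descendants together have total projected length exactly $\ell$.

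First, using $f_{n,\theta}(r)\geq K$, I would pick distinct disks $Q_1,\dots,Q_K\in\mathcal{Q}_n$ with $r\in\pi_\theta(Q_i)$. Let $I$ be the interval of half-length $\ell$ centered at $r$. Then each $\pi_\theta(Q_i)\subset I$. Moreover, any $Q\in\mathcal{Q}_n$ with $r,r'\in\pi_\theta(Q)$ satisfies $|r-r'|\leq\ell$, so $r'\in I$; thus $I$ is admissible for $Mf_{N,\theta}(r')$.

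For each $Q_i$, self-similarity produces exactly $s^{N-n}$ stage-$N$ descendants $Q'>_{N-n}Q_i$, each a homothetic copy of $\mathcal{J}_0$ at scale $s^{-N}$ contained in $Q_i$. Hence $\pi_\theta(Q')\subset\pi_\theta(Q_i)\subset I$ and $|\pi_\theta(Q')|=s^{-N}|\pi_\theta(\mathcal{J}_0)|$, so
\begin{equation*}
\int_I \sum_{Q'>_{N-n}Q_i}\chi_{\pi_\theta(Q')}(u)\,du \;=\; \sum_{Q'>_{N-n}Q_i}|\pi_\theta(Q')| \;=\; s^{N-n}\cdot s^{-N}|\pi_\theta(\mathcal{J}_0)| \;=\; \tfrac{|I|}{2}.
\end{equation*}
Since distinct $Q_i$'s correspond to distinct words in $W_n$, their stage-$N$ descendants form pairwise disjoint subfamilies of $\mathcal{Q}_N$; summing over $i=1,\dots,K$ therefore yields $\int_I f_{N,\theta}\geq K|I|/2$, and dividing by $|I|$ gives $Mf_{N,\theta}(r')\geq K/2$, as required.

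The only step that is not straightforward bookkeeping is the exact mass identity $\sum_{Q'>_m Q_i}|\pi_\theta(Q')|=|\pi_\theta(Q_i)|$, which is what pins the bound to the clean constant $K/2$. It reduces here to a one-line count (or, equivalently, a one-step induction on $m$), but it is exactly where the structural hypotheses (no rotations, equal contraction ratios) are used: with rotations, descendants at a given stage need not have uniformly-sized projections, and with nonuniform ratios the $s^m$ count and $s^{-m}$ scaling would not match, so the identity would fail and only a weaker substitute would be available.
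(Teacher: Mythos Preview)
Your argument is correct and follows essentially the same route as the paper: the same Hardy--Littlewood interval $I$ of half-length $s^{-n}|\pi_\theta(\mathcal{J}_0)|$ centered at $r$, the same collection of $K$ stage-$n$ disks whose projections sit inside $I$, and the same mass identity $\sum_{Q'>_m Q_i}|\pi_\theta(Q')|=|\pi_\theta(Q_i)|$ (which the paper phrases as an induction on $m$ and you obtain by a direct count). Your write-up is in fact slightly more explicit than the paper's in two useful places: you verify that $r'\in I$, and you note that distinct $Q_i$'s yield disjoint families of stage-$N$ descendants so that the sum over $i$ involves no double-counting.
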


\begin{definition}
For fixed $\theta$, given $K>0$, we say that $Q\in\mathcal{Q}_N$ is \important{$K$-stacked} if $Mf_{N,\theta}(r)\geq K$ for \allEvery{all} $r\in\pi_\theta(Q)$. We also write $Q\in\mathcal{Q}_N^S$ 
if $Q$ is $K$-stacked at the angle $\theta$.
\end{definition}

In applications, $K=K(N)$ will grow slowly with $N$. Whenever $K$ is clear from context, we will omit it and refer to $K$-stacked disks as ``stacked".

\begin{corollary}\label{HL-1-cor}
If $f_{n,\theta}(r)\geq K$ for some $r\in\pi_\theta(Q)$ and $Q\in\mathcal{Q}_n$, then $Q'$ is $K/2$-stacked at $\theta$ for all $Q'>Q$.
\end{corollary}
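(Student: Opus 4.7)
The corollary should follow almost immediately from Lemma \ref{HL-1} together with the fact that descendants project inside their ancestors. Specifically, my plan is as follows.

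Fix $Q \in \mathcal{Q}_n$ and $r \in \pi_\theta(Q)$ with $f_{n,\theta}(r) \geq K$, and let $Q' > Q$ be a descendant with $Q' \in \mathcal{Q}_N$ for some $N \geq n$. To show that $Q'$ is $K/2$-stacked at $\theta$, I need to verify that $Mf_{N,\theta}(r') \geq K/2$ for every $r' \in \pi_\theta(Q')$. The key observation is that since $Q' \subseteq Q$, we have $\pi_\theta(Q') \subseteq \pi_\theta(Q)$, so any such $r'$ lies in $\pi_\theta(Q)$ together with the original $r$.

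With $r$ and $r'$ both in $\pi_\theta(Q)$ for the same $Q \in \mathcal{Q}_n$, and with $n \leq N$, Lemma \ref{HL-1} applies directly and yields $Mf_{N,\theta}(r') \geq K/2$. Since $r' \in \pi_\theta(Q')$ was arbitrary, this is precisely the statement that $Q'$ is $K/2$-stacked at $\theta$, i.e.\ $Q' \in \mathcal{Q}_N^S$ (with threshold $K/2$).

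There is no real obstacle here; the corollary is essentially a repackaging of Lemma \ref{HL-1} in which the point $r'$ is replaced by an arbitrary point of a descendant disk $Q' \subseteq Q$. The only thing to be careful about is the quantifier structure: the hypothesis provides a single point $r \in \pi_\theta(Q)$ at which $f_{n,\theta}$ is large, whereas the conclusion about stackedness requires a lower bound on $Mf_{N,\theta}$ at \emph{every} point of $\pi_\theta(Q')$. The nesting $\pi_\theta(Q') \subseteq \pi_\theta(Q)$ is exactly what lets one invoke Lemma \ref{HL-1} uniformly in $r'$ from the single hypothesis on $r$.
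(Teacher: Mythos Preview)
Your proof is correct and matches the paper's intent: the corollary is stated without a separate proof precisely because it follows immediately from Lemma~\ref{HL-1} via the inclusion $\pi_\theta(Q')\subseteq\pi_\theta(Q)$, exactly as you argue.
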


 We also have the following.

\begin{lemma}\label{key1}
Fix $\theta$ and $K$. If $Q$ is $K$-stacked, then $Q'$ is $\frac{K}{2}$-stacked for all $Q'\succ Q$.
\begin{proof}
Seen by self-similarly rescaling an appropriate Hardy-Littlewood interval containing $\pi_\theta(Q)$ as in Lemma \ref{HL-1}. Details omitted.
\end{proof}
\end{lemma}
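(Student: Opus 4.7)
The plan is to transfer the Hardy–Littlewood witness of the $K$-stackedness of $Q$ at level $n$ to an analogous estimate at level $n'$ by a self-similar rescaling, as the hint suggests. Write $w' = vwv'$ with $|v|=a$, $|v'|=b$, $|w|=n$, so that $n' = n+a+b$. Since every similitude is a homothety with contraction $1/s$, the composition $T_v$ is a homothety with contraction $s^{-a}$; write $T_v(x) = s^{-a}x + z_v$. The self-similar structure then gives
\begin{equation*}
Q' \;=\; T_v\bigl(T_w(T_{v'}(J_0))\bigr) \;\subseteq\; T_v(Q) \;=\; T_{vw}(J_0) \;\in\; \mathcal{Q}_{n+a},
\end{equation*}
and the word of $Q'$ extends that of $T_v(Q)$ by appending $v'$, so $Q'$ is a standard descendant of $T_v(Q)$ of generation $b$.

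\textbf{Step 1: $T_v(Q)$ is $K$-stacked (up to an absolute constant) at level $n+a$.} Projecting $T_v$ to the $\theta$-axis yields the affine map $\tilde T(u) = s^{-a}u + \pi_\theta(z_v)$ satisfying $\pi_\theta \circ T_v = \tilde T \circ \pi_\theta$. The crucial combinatorial fact is that the level-$(n+a)$ disks whose word begins with $v$ are exactly $\{T_v(Q_i) : Q_i \in \mathcal{Q}_n\}$, and their $\theta$-projections are the $\tilde T$-images of $\pi_\theta(Q_i)$. Consequently
\begin{equation*}
f_{n+a,\theta}(r') \;\geq\; \sum_{Q_i \in \mathcal{Q}_n} \chi_{\pi_\theta(T_v(Q_i))}(r') \;=\; f_{n,\theta}\bigl(\tilde T^{-1}(r')\bigr)
\end{equation*}
for $r' \in \pi_\theta(T_v(J_0))$. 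From the $K$-stackedness of $Q$ I would extract a single interval $I \supseteq \pi_\theta(Q)$ with $\frac{1}{|I|}\int_I f_{n,\theta}\,du \gtrsim K$; its image $I' := \tilde T(I) \supseteq \pi_\theta(T_v(Q))$ has length $s^{-a}|I|$, and a change of variables gives $\frac{1}{|I'|}\int_{I'} f_{n+a,\theta}\,dr' \gtrsim K$. Hence $Mf_{n+a,\theta}(r) \gtrsim K$ for every $r \in \pi_\theta(T_v(Q))$.

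\textbf{Step 2: Propagate from level $n+a$ to level $n'$.} Since $Q'$ is a descendant of $T_v(Q)$ of generation $b$, the argument of Lemma \ref{HL-1} applied at scale $s^{-(n+a)}$ transfers the stacking to level $n'$ with a further factor-of-$2$ loss. Given $r' \in \pi_\theta(Q')$ and an HL interval $J$ realizing $Mf_{n+a,\theta}(r') \gtrsim K$, expand $J$ by $s^{-(n+a)}|\pi_\theta(J_0)|$ on each side to $J^*$, into which every level-$(n+a)$ disk meeting $J$ projects entirely. The descendants-preserve-mass identity $\sum_{Q'' >_b Q_j}|\pi_\theta(Q'')| = |\pi_\theta(Q_j)|$ from the proof of Lemma \ref{HL-1} then yields $\int_{J^*} f_{n',\theta} \geq \int_J f_{n+a,\theta} \gtrsim K|J|$, and since $|J^*| \leq 2|J|$ one obtains $Mf_{n',\theta}(r') \geq K/2$, as required.

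\textbf{Main obstacle.} The technical crux is the extraction of an interval $I \supseteq \pi_\theta(Q)$ with averaged value $\gtrsim K$ in Step 1 (and its analogue in Step 2), starting from only the pointwise bound $Mf_{n,\theta} \geq K$ on $\pi_\theta(Q)$. I would handle this by a dichotomy at the natural scale $s^{-n}$. If some $r^* \in \pi_\theta(Q)$ has $f_{n,\theta}(r^*) \geq K$, then $r^*$ lies in at least $K$ projected level-$n$ disks (one being $Q$ itself), all fitting inside the canonical interval $[r^* - s^{-n}|\pi_\theta(J_0)|, r^* + s^{-n}|\pi_\theta(J_0)|] \supseteq \pi_\theta(Q)$, and the computation from Lemma \ref{HL-1} gives averaged value $\geq K/2$ on it. In the complementary case $f_{n,\theta} < K$ pointwise on $\pi_\theta(Q)$, the HL intervals realizing $Mf_{n,\theta} \geq K$ necessarily extend to higher-value regions outside $\pi_\theta(Q)$; a Vitali-type covering of $\pi_\theta(Q)$ by these intervals, together with the piecewise-constant nature of $f_{n,\theta}$ at scale $s^{-n}$, yields a single interval of comparable average. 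Tracking the constants carefully through both steps collapses into the factor $K/2$ in the statement.
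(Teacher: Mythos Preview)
Your two-step plan is exactly what the paper's one-line proof intends: push the Hardy--Littlewood witness for $Q$ through a self-similar rescaling, then propagate to the deeper level via the Lemma~\ref{HL-1} mechanism. Two points are worth noting.

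\textbf{Convention.} You have silently reversed the paper's composition order. The paper sets $T_w=T_{w_n}\circ\cdots\circ T_{w_1}$, so $T_{vw}=T_w\circ T_v$ and hence $T_v(Q)=T_vT_w(\mathcal J_0)=T_{wv}(\mathcal J_0)$, not $T_{vw}(\mathcal J_0)$. Under the paper's convention the level-$(n+a)$ disks that form the $T_v$-image of $\mathcal Q_n$ are those whose word \emph{ends} in $v$, not begins with $v$; your key inequality $f_{n+a,\theta}(r')\ge f_{n,\theta}(\tilde T^{-1}(r'))$ is still correct (for that reason), but $Q'$, whose word is $vwv'$, is then not a word-descendant of $T_v(Q)$. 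The fix is immediate: run Step~1 with $T_{v'}$ in place of $T_v$. Then $T_{v'}(Q)=T_{wv'}(\mathcal J_0)$ is $K$-stacked at level $n+b$ by the same rescaling, and since $T_v(\mathcal J_0)\subseteq\mathcal J_0$ one has $Q'=T_{v'}T_wT_v(\mathcal J_0)\subseteq T_{v'}T_w(\mathcal J_0)=T_{v'}(Q)$, which is all Step~2 needs. (Equivalently: your argument is correct as written under your convention, and since $\prec$ is invariant under word reversal the lemma itself is convention-independent; but the displayed identities should match the paper.)

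\textbf{The ``main obstacle'' is not one.} You do not need a single HL interval $I\supseteq\pi_\theta(Q)$ in Step~1. The rescaling works pointwise: given $r_0\in\pi_\theta(T_{v'}(Q))$, set $r=\tilde T_{v'}^{-1}(r_0)\in\pi_\theta(Q)$, take \emph{any} interval $I\ni r$ realizing $Mf_{n,\theta}(r)\ge K$, and push it forward to $I'=\tilde T_{v'}(I)\ni r_0$; the change of variables gives $\frac{1}{|I'|}\int_{I'}f_{n+b,\theta}\ge K$ with no loss. The only place a constant is lost is Step~2, where one expands $I'$ by $s^{-(n+b)}|\pi_\theta(\mathcal J_0)|$ on each side and uses the descent identity; this yields $K/2$ when $|I'|$ is at least that scale, and a comparable constant in the short-interval case by counting disks exactly as you outline.
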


The following is a blueprint for how Hardy-Littlewood estimates bound $|\text{supp}f|$. A small number of ``bad disks'' can be measured separately, leaving a good estimate based on the structure of the ``good disks''.

\begin{lemma}\label{HL-2}
For fixed $N$, $\theta$, $K$, suppose that there are at most $\frac{s^N}{K}$ unstacked disks $Q\in\mathcal{Q}_N$. Then $|\text{supp}f_N|\lesssim\frac1{K}$.
\end{lemma}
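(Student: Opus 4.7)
The plan is to split $\operatorname{supp} f_{N,\theta}$ into two pieces according to whether the disks $Q\in\mathcal{Q}_N$ whose projections cover a given point are stacked or not, and then bound each piece separately. Since every $r\in\operatorname{supp} f_{N,\theta}$ lies in $\pi_\theta(Q)$ for at least one $Q\in\mathcal{Q}_N$, we may write
\begin{equation*}
\operatorname{supp} f_{N,\theta} \;\subseteq\; \Bigl(\bigcup_{Q\in\mathcal{Q}_N^S} \pi_\theta(Q)\Bigr)\;\cup\;\Bigl(\bigcup_{Q\in\mathcal{Q}_N\setminus\mathcal{Q}_N^S} \pi_\theta(Q)\Bigr),
\end{equation*}
where $\mathcal{Q}_N^S$ is the set of $K$-stacked disks at angle $\theta$.

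The unstacked piece is handled by direct counting. Each $Q\in\mathcal{Q}_N$ is a translate of $s^{-N}\mathcal{J}_0$, so $|\pi_\theta(Q)|=s^{-N}|\pi_\theta(\mathcal{J}_0)|\lesssim s^{-N}$. By hypothesis there are at most $s^N/K$ unstacked disks, so the union of their projections has Lebesgue measure at most $(s^N/K)\cdot O(s^{-N})=O(1/K)$.

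For the stacked piece, I would invoke the weak-type $(1,1)$ inequality for the Hardy–Littlewood maximal operator. By definition, every point in $\pi_\theta(Q)$ for $Q\in\mathcal{Q}_N^S$ belongs to $\{r:Mf_{N,\theta}(r)\geq K\}$, and
\begin{equation*}
\|f_{N,\theta}\|_1 \;=\; \sum_{Q\in\mathcal{Q}_N}|\pi_\theta(Q)| \;\lesssim\; s^N\cdot s^{-N} \;=\; 1,
\end{equation*}
so the weak-type bound gives $|\{Mf_{N,\theta}\geq K\}|\lesssim \|f_{N,\theta}\|_1/K\lesssim 1/K$. Adding the two contributions yields $|\operatorname{supp} f_{N,\theta}|\lesssim 1/K$ as claimed.

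There is no real obstacle here; the only thing to keep track of is the uniform bound $|\pi_\theta(Q)|\lesssim s^{-N}$, which comes from $\operatorname{diam}\mathcal{J}\sim 1$ and the assumption that all contractions have ratio $s^{-1}$. The whole argument is a very clean instance of the standard recipe in Favard-length papers such as \cite{NPV,BV1,BLV}: use the weak-type $(1,1)$ inequality to control where the maximal function is large, and handle the complementary ``exceptional'' set by crude counting.
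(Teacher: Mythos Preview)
Your proof is correct and follows essentially the same approach as the paper: split the support according to stacked versus unstacked disks, bound the unstacked contribution by direct counting (at most $s^N/K$ disks, each with projection of length $\lesssim s^{-N}$), and bound the stacked contribution via the weak-type $(1,1)$ Hardy--Littlewood inequality together with $\|f_{N,\theta}\|_1\lesssim 1$.
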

\begin{proof}
We have
\begin{equation*}
f_{N,\theta}=\sum_{Q\notin\mathcal{Q}_N^S}\chi_{\pi_\theta(Q)}+\sum_{Q\in\mathcal{Q}_N^S}\chi_{\pi_\theta(Q)}.
\end{equation*}
Note that for all $Q\in\mathcal{Q}_N^S$, $\pi_\theta(Q)\subseteq\{r:Mf_{N,\theta}(r)\geq K\}$. The above line splits the support of $f_{N,\theta}$ into two sets. Estimating the first trivially and applying the Hardy-Littlewood inequality to the second,
\begin{equation*}
 |\text{supp}f_{N,\theta}|\leq s^{-N}|\pi_\theta(\mathcal{J}_0)|\#\{\mathcal{Q}_N\setminus\mathcal{Q}_N^S\}+\frac{1}{K}||f_{N,\theta}||_1\lesssim \frac{1}{K}.\qedhere
\end{equation*}
\end{proof}

In order to apply Hardy-Littlewood analysis to the visibility integral, we will need a visibility analogue of $f_{n,\theta}$:
\begin{equation*}
g_n(\theta):=f_{n,\theta-\pi/2}(0).
\end{equation*}
Note that $\text{supp}(g_n)$ is the union of $P_0(\mathcal{J}_n)$ and the antipodal points of $P_0(\mathcal{J}_n)$, i.e.~$\text{supp}(g_n)=P_0(\mathcal{J}_n)\cup(P_0(\mathcal{J}_n)+\pi)$. We also need the following fact.

\begin{lemma}\label{HL-vis}
If $\mathcal{J}\subseteq B(0,100)\setminus B(0,\frac1{100})$, then $Mf_{n,\theta}(0)\sim Mg_n(\theta+\pi/2)$.
\begin{proof}
The key observation is that that the Hardy-Littlewood intervals for the two functions are comparable up to minor dilations. Let $\Theta\subset S^1$ be an interval centered at $\theta+\pi/2$ so that $\frac{1}{|\Theta|}\int_{\Theta}g_n(t)\geq \frac{1}{4}Mg_n(\theta+\pi/2)$. We can assume that $|\Theta|\gtrsim n^{-s}$, since $g_n(t)$ is a sum of characteristic functions of intervals, each of which has length $\gtrsim n^{-s}$. Let $C\Theta$ be the $C$--fold dilate of $\Theta$, where $C=O(1)$ is chosen so that $|C\Theta|\geq 2 n^{-s}.$ Let $\Gamma\subset\RR^2$ be the intersection of $B(0,100)\setminus B(0,\frac1{100})$ with the set of all rays from the origin that make an angle $t\in C\Theta$ with the $x$--axis. Thus $\Gamma$ is a segment of an annulus. Furthermore, the number of disks $Q\in\mathcal{Q}_n$ contained in $\Gamma$ is $\gtrsim |\Theta|s^{n} Mg_n(\theta+\pi/2),$ since each disk can contribute at most $O(s^{-n})$ to the integral $\int_{\Theta}g_n(t)dt.$ Now, consider the infinite strip $A\subset\RR^2$
centered at the origin of dimensions $2t_0\times\infty$, whose long axis points in the direction $\theta+\pi/2$. Since $\Gamma\subset B(0,100),$ we can select $t_0\lesssim |\Theta|$ so that $A$ contains $\Gamma$. Let $I=[-t_0,t_0]\subset\RR$.

Recall that for any angle $t$ and for any disk $Q\in\mathcal{Q}_n$, we have $\int_{\RR}\chi_{\pi_{t}(Q)}\sim s^{-n}$. Thus,
\begin{equation*}
\begin{split}
\int_I f_{n,\theta}(r)dr & \gtrsim \sum_{\substack{Q\in\mathcal Q_n\\Q\subset A}}\chi_{\pi_\theta(Q)}(r)dr \\
& \gtrsim s^{-n}|\{Q\in\mathcal Q_n: Q\in A\}|\\
&\gtrsim s^{-n}\big(|\Theta|s^{n} Mg_n(\theta)\big).
\end{split}
\end{equation*}
Since $|I|\lesssim |\Theta|$, we have $M f_{n,\theta}\gtrsim Mg_n(\theta+\pi/2)$. A similar argument establishes the reverse quasi-inequality.
\end{proof}
\end{lemma}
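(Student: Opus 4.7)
My plan is to establish the equivalence $Mf_{n,\theta}(0)\sim Mg_n(\theta+\pi/2)$ by proving each direction separately through a direct geometric comparison of the averaging windows for the two maximal functions. The key structural input is that the annular condition $\mathcal{J}\subseteq B(0,100)\setminus B(0,1/100)$ forces the radial direction from $0$ to a disk $Q\in\mathcal{Q}_n$ and the linear projection $\pi_\theta(Q)$ to agree up to bounded multiplicative constants: a disk at polar position $(r,\phi)$ with $r\in[1/100,100]$ projects under $\pi_\theta$ near $0$ precisely when $\phi$ is near $\theta+\pi/2$, and the linear distance $|\pi_\theta(Q)|$ from $0$ equals $r\sin(\phi-(\theta+\pi/2))+O(s^{-n})$, which for small $|\phi-(\theta+\pi/2)|$ is comparable to $|\phi-(\theta+\pi/2)|$.

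To show $Mf_{n,\theta}(0)\gtrsim Mg_n(\theta+\pi/2)$, I would pick an arc $\Theta$ around $\theta+\pi/2$ that realizes $Mg_n(\theta+\pi/2)$ up to a factor of $2$. Since $g_n$ is a sum of characteristic functions of arcs of length $\gtrsim s^{-n}$, I may assume $|\Theta|\gtrsim s^{-n}$ (otherwise $\Theta$ lies inside a single such arc and $Mg_n$ is bounded pointwise by $\sup g_n$, which is controlled by an analogous argument at scale $s^{-n}$). Each disk $Q$ whose angular extent from the origin meets a slightly enlarged arc $C\Theta$ contributes $\sim s^{-n}$ to $\int_\Theta g_n(t)\,dt$, so the annular sector $\Gamma$ over $C\Theta$ contains at least $\sim s^n|\Theta|\,Mg_n(\theta+\pi/2)$ disks of $\mathcal{Q}_n$. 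By the geometric comparison above, the $\pi_\theta$-image of each such disk is an interval of length $\sim s^{-n}$ lying in a common interval $I$ around $0$ of length $|I|\sim|\Theta|$. Summing yields $\int_I f_{n,\theta}(r)\,dr\gtrsim s^{-n}\cdot s^n|\Theta|\,Mg_n(\theta+\pi/2)$, and dividing by $|I|$ gives the claimed inequality.

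For the reverse inequality the argument is symmetric: given an interval $I\ni 0$ that nearly realizes $Mf_{n,\theta}(0)$ with $|I|\gtrsim s^{-n}$, the disks contributing meaningfully to $\int_I f_{n,\theta}$ sit inside the strip of width $\sim|I|$ around the line through $0$ perpendicular to direction $\theta$; intersecting this strip with the annulus confines them to an angular sector of width $\sim|I|$ centered at $\theta+\pi/2$, and each such disk contributes $\sim s^{-n}$ to $\int_{\Theta'} g_n(t)\,dt$ for a slightly expanded arc $\Theta'$. The same counting produces the matching lower bound for $Mg_n(\theta+\pi/2)$.

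The main technical obstacle will be bookkeeping the factors of $s^{-n}$ carefully, because each disk has diameter $\sim s^{-n}$, linear projection of length $\sim s^{-n}$, and (thanks to the annular bounds) angular extent from the origin also $\sim s^{-n}$, so these three scales must cancel correctly on both sides of each inequality. The delicate point is the boundary case $|\Theta|,|I|\sim s^{-n}$, where a single disk can dominate an average; handling this requires enlarging the averaging window by a bounded factor $C\sim 1$ so that the endpoint corrections can be absorbed into the constant in $\sim$. The annular lower bound $|x|\geq 1/100$ is what makes this possible, since without it disks near the origin would have uncontrolled angular width.
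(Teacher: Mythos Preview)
Your proposal is correct and follows essentially the same approach as the paper's proof: both arguments pick a near-optimal averaging window for one maximal function, count the disks of $\mathcal{Q}_n$ in the corresponding annular sector or strip (each contributing $\sim s^{-n}$ to the relevant integral), and use the annular bounds $1/100\leq |x|\leq 100$ to convert between angular width and linear width with only bounded loss. Your write-up is in fact somewhat more explicit than the paper's about the polar-coordinate comparison and the handling of the boundary case $|\Theta|\sim s^{-n}$, but the underlying strategy is identical.
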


The model for turning a Favard length estimate into a visibility estimate is as follows:

\begin{theorem}\label{fav-upper-heur}(Heuristic form of Theorem \ref{fav-upper})
Suppose $0\notin\mathcal{J}$. If $\mathcal{J}_L$ has small Favard length, then $\mathcal{J}_N$ has small visibility from the origin for all $N$ much larger than $L$.

\begin{proof}[Heuristic proof.]

Since $\Fav(\mathcal{J}_L)$ is small, it must be the case that for most $\theta$, there is at least one tall stack of disks in $\mathcal{Q}_L$ pointing in the $\theta$ direction. The remaining $\theta$ belong to a small set $E_L$.

In bounding Favard length, the main idea of \cite{NPV} and others is to establish that for most angles $\theta$, most disks are $K$-stacked, for $K$ appropriately large as a function of $N$. The additional problem in obtaining upper visibility bounds is that it is not enough for stacking to occur {\it somewhere} in $\mathcal{J}_N$, but instead we need stacking along lines passing through the vantage point $0$. \cite{SS} overcomes this difficulty by choosing $N$ sufficiently large (compared to $L$) and using self-similarity to prove that most disks of $\mathcal{J}_N$ descend from self-similar copies of the tall stack of $\mathcal{J}_L$. This situation is as in Figures \ref{greenM} and \ref{greenN}.

Specifically, if $N$ is sufficiently large compared to $L$, then almost all $Q'\in\mathcal{Q}_N$ satisfy $Q\prec Q'$ for all $Q\in\mathcal{Q}_L$. The set of disks in $\mathcal{Q}_N$ that fail to satisfy this property are negligible, and they do not affect our argument. If $Q'$ satisfies  $Q\prec Q'$ for all $Q\in\mathcal{Q}_L$, then we say $Q'$ is generic. In particular, if $\theta\notin E_L$ and $Q'$ is generic, then $Q'$ is stacked by Lemma \ref{key1}. Using Lemma \ref{HL-vis} to pass to the function $g_N$, we can apply Hardy-Littlewood analysis as in Lemma \ref{HL-2} to bound the projection of the generic disks.
\end{proof}
\end{theorem}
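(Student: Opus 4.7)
The plan is to make quantitative the three steps of the heuristic argument: (i) showing that most angles admit a tall stack at level $L$, (ii) showing that most disks at level $N$ descend from self-similar copies of every disk at level $L$, and (iii) transferring these facts from the linear projection function $f_{N,\theta}$ to the visibility function $g_N$ via Lemma \ref{HL-vis}, then applying the Hardy--Littlewood scheme of Lemma \ref{HL-2}. First I would reduce to the case $a = 0$ with $\mathcal{J}\subset B(0,100)\setminus B(0,1/100)$ by an affine change of coordinates, keeping track of constants through $\dist(a,\mathcal{J})$ to obtain uniformity over compact vantage sets.

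Set $\eta := \sqrt{\Fav(\mathcal{J}_L)}$ with $L := C_2\log n$ (the constant $C_2$ is chosen below), and set $K := c_0/\eta$ for a small absolute $c_0$. By Chebyshev applied to the Favard average, the set $E_L := \{\theta : |\pi_\theta(\mathcal{J}_L)| > \eta\}$ has $|E_L|\leq \eta$. For $\theta\notin E_L$, since $\int f_{L,\theta}(r)\,dr = |\pi_\theta(\mathcal{J}_0)|\sim 1$ while $|\supp f_{L,\theta}|\leq\eta$, pigeonholing gives some $r_\theta\in\pi_\theta(\mathcal{J}_L)$ with $f_{L,\theta}(r_\theta)\gtrsim 1/\eta\gtrsim K$, so by Corollary \ref{HL-1-cor} the unique $Q_\theta\in\mathcal{Q}_L$ whose projection contains $r_\theta$ is $K/2$-stacked.

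Next I would carry out the word combinatorics in step (ii). Call $Q'\in\mathcal{Q}_N$ \emph{generic} if its associated word $w\in W_N$ contains every $v\in W_L$ as a subword. Partitioning $w$ into $\lfloor N/L\rfloor$ disjoint blocks of length $L$ and applying a union bound over $v\in W_L$, the number of non-generic words is at most
\begin{equation*}
s^L\cdot s^N\bigl(1-s^{-L}\bigr)^{\lfloor N/L\rfloor}\leq s^N \exp\bigl(L\log s - \lfloor N/L\rfloor s^{-L}\bigr).
\end{equation*}
Choosing $C_2$ so small that $s^L\leq n^{1/2}$, the quantity $\lfloor N/L\rfloor s^{-L}$ is at least a positive power of $n$ and thus dwarfs $L\log s + \log K$ for $n$ large. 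Hence the number of non-generic disks is at most $s^N/K$. For every generic $Q'$ and every $\theta\notin E_L$, the stacked disk $Q_\theta$ satisfies $Q_\theta\prec Q'$, so Lemma \ref{key1} makes $Q'$ itself $K/4$-stacked at angle $\theta$.

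Finally I would transfer to visibility. Using Lemma \ref{HL-vis} and the preceding discussion of $g_N$, for each $\phi\in S^1$ the bound $Mg_N(\phi)\gtrsim K$ holds whenever the ray from $0$ at angle $\phi$ passes through a generic, $K/4$-stacked disk of $\mathcal{Q}_N$. The set of $\phi\in\supp g_N$ for which this fails is contained in the union of (a) the image under $P_0$ of non-generic disks, whose total angular measure is $\lesssim s^{-N}\cdot (s^N/K) = 1/K$, and (b) the image under $P_0$ of disks lying on rays in (a rotation of) $E_L$, which contributes $\lesssim |E_L|\lesssim\eta$. Applying the visibility analogue of Lemma \ref{HL-2} with maximal function bound $K$ then gives
\begin{equation*}
\vis(0;\mathcal{J}_N)\lesssim \tfrac{1}{K} + |E_L| \lesssim \eta = \sqrt{\Fav(\mathcal{J}_L)},
\end{equation*}
which is the claim with $L = C_2\log n$.

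The main obstacle I expect is step (iii): verifying cleanly that $K$-stackedness of a disk $Q'$ at linear angle $\theta-\pi/2$ implies $Mg_N(\theta)\gtrsim K$ uniformly in the position of $Q'$ in the annulus. Lemma \ref{HL-vis} provides the core comparison $Mf_{N,\theta}(0)\sim Mg_N(\theta+\pi/2)$, but one must check that the Hardy--Littlewood interval at $0$ realizing the lower bound in $f_{N,\theta}$ is genuinely the one associated to the ray through $Q'$, with constants independent of $\operatorname{dist}(Q',0)$; this is where the assumption $\mathcal{J}\subset B(0,100)\setminus B(0,1/100)$ (equivalently $a\notin\mathcal{J}$) is crucial, and it is what forces the extra geometric bookkeeping in making the heuristic precise.
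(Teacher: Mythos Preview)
Your proposal is correct and follows essentially the same route as the paper: the three pieces (bad-angle set $E_L$ via Chebyshev/pigeonholing, the word-combinatorics bound on non-generic disks via disjoint blocks and a union bound, and the transfer from $Mf_{N,\theta}(0)$ to $Mg_N$ via Lemma~\ref{HL-vis} followed by the Hardy--Littlewood maximal inequality) match the paper's Lemma~\ref{m-of-E}, Proposition~\ref{bad words}, and the proof of Theorem~\ref{fav-upper} almost step for step. The only cosmetic slips are that $Q_\theta$ need not be \emph{unique} (any disk whose projection contains $r_\theta$ will do), and your choice of $C_2$ is a bit more conservative than the paper's $L=(1-2\epsilon)\log_s N$, but neither affects the argument.
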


We now quantify the above argument.
First, we bound the size of $E_L$, the set of ``bad angles.''

\begin{lemma}\label{m-of-E}
Let
\begin{equation*}
\begin{split}
K&=K(L):=\frac1{\sqrt{\Fav(\mathcal{J}_L)}},\\
E_L&:=\{\theta:||f_{L,\theta-\pi/2}||_\infty\leq K\}.
\end{split}
\end{equation*}
Then
\begin{equation*}
|E_L|\lesssim\frac1{K}.
\end{equation*}

%
\end{lemma}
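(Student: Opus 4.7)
The plan is to exploit the duality between the $L^\infty$ norm of $f_{L,\theta}$ and the size of its support, combined with the fact that the $L^1$ norm is essentially constant in $\theta$.

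First I would establish that $\|f_{L,\theta}\|_1 \sim 1$ uniformly in $\theta$. Since $f_{L,\theta} = \sum_{Q \in \mathcal{Q}_L} \chi_{\pi_\theta(Q)}$ and each $Q \in \mathcal{Q}_L$ is a similar copy of $\mathcal{J}_0$ scaled by $s^{-L}$, we have $|\pi_\theta(Q)| = s^{-L} |\pi_\theta(\mathcal{J}_0)|$. Because $\mathcal{J}$ is not contained in a line, $\mathcal{J}_0$ has nonempty interior, so $|\pi_\theta(\mathcal{J}_0)| \sim 1$ uniformly in $\theta$. There are $s^L$ disks in $\mathcal{Q}_L$, so
\begin{equation*}
\|f_{L,\theta}\|_1 = \sum_{Q \in \mathcal{Q}_L} |\pi_\theta(Q)| \sim s^L \cdot s^{-L} = 1.
\end{equation*}

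Next, for any $\theta$ with $\|f_{L,\theta}\|_\infty \leq K$, the trivial estimate $\|f_{L,\theta}\|_1 \leq \|f_{L,\theta}\|_\infty \cdot |\supp f_{L,\theta}|$ combined with the above gives
\begin{equation*}
|\pi_\theta(\mathcal{J}_L)| = |\supp f_{L,\theta}| \gtrsim \frac{1}{K}.
\end{equation*}
Let $E_L' := \{\theta' \in [0,\pi) : \|f_{L,\theta'}\|_\infty \leq K\}$; by translation-invariance and the $\pi$-periodicity of the projection, $|E_L| \lesssim |E_L'|$.

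Finally I would integrate this pointwise lower bound against the definition of Favard length:
\begin{equation*}
\Fav(\mathcal{J}_L) = \frac{1}{\pi}\int_0^\pi |\pi_{\theta'}(\mathcal{J}_L)|\, d\theta' \geq \frac{1}{\pi}\int_{E_L'} |\pi_{\theta'}(\mathcal{J}_L)|\, d\theta' \gtrsim \frac{|E_L'|}{K}.
\end{equation*}
Solving for $|E_L'|$ and using the definition $K = 1/\sqrt{\Fav(\mathcal{J}_L)}$ gives
\begin{equation*}
|E_L| \lesssim |E_L'| \lesssim K \cdot \Fav(\mathcal{J}_L) = \frac{\Fav(\mathcal{J}_L)}{\sqrt{\Fav(\mathcal{J}_L)}} = \sqrt{\Fav(\mathcal{J}_L)} = \frac{1}{K},
\end{equation*}
which is the desired bound. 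There is no serious obstacle here; the only point to be careful about is the uniform lower bound $|\pi_\theta(\mathcal{J}_0)| \gtrsim 1$, which requires the standing assumption that $\mathcal{J}$ is not contained in a line so that $\mathcal{J}_0$ has positive area.
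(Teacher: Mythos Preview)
Your proof is correct and follows essentially the same route as the paper's: both arguments combine the uniform bound $\|f_{L,\theta}\|_1\sim 1$ with the trivial inequality $\|f\|_1\leq\|f\|_\infty\cdot|\supp f|$ to deduce $|\pi_\theta(\mathcal{J}_L)|\gtrsim 1/K$ on $E_L$, and then feed this into the definition of Favard length. You are somewhat more explicit about the $L^1$ computation and about handling the $\pi/2$ shift via $E_L'$, but there is no substantive difference.
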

\begin{proof}
We have
\begin{equation*}
K^{-2}=\text{Fav}(\mathcal{J}_L)\geq \frac1{K} \Big|\{\theta:|\pi_\theta(\mathcal{J}_L)|\geq\frac1{K}\}\Big| \gtrsim \frac1{K} |E_L|.
\end{equation*}
The last inequality follows from
\begin{equation*}
1\sim\int f_{L,\theta}(x) dx\leq ||f_{L,\theta}||_\infty\cdot |\pi_\theta(\mathcal{J}_L)|.\qedhere
\end{equation*}

\end{proof}

We now fix some small $\epsilon>0$ (independent of $N$), and define
\begin{equation}\label{sizeOfL}
L:=(1-2\epsilon)\log_s N.
\end{equation}
We will say that $Q'\in\mathcal{Q}_N$ is \important{generic} if $Q\prec Q'$ for all $Q\in\mathcal{Q}_L$. Note that this definition depends on $N$ and $L$, but we will not display that dependence.
For $\mathcal{R}\subseteq\mathcal{Q}_N$, let
\begin{equation*}
\mathbb{P}(\mathcal{R}):=\frac{\# \mathcal{R}}{s^N},
\end{equation*}

\begin{equation*}
\mathcal{G}_N:=\{Q'\in\mathcal{Q}_N:Q'\text{ is generic}\}.
\end{equation*}

\begin{proposition}\label{bad words}
We have
\begin{equation*}
\mathbb{P}(\mathcal{Q}_N\setminus\mathcal{G}_N)\lesssim N^{1-2\epsilon}e^{-N^\epsilon}.
\end{equation*}
\end{proposition}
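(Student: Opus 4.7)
\medskip

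\noindent\textbf{Proof plan.} The statement is purely combinatorial: under the uniform probability $\mathbb{P}(\mathcal{R}) = \#\mathcal{R}/s^N$, it asks for an upper bound on the fraction of words $w' \in W_N$ over the alphabet $\{1,\dots,s\}$ that fail to contain some $w \in W_L$ as a (contiguous) subword. I would prove it by a union bound combined with a block-independence argument.

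\medskip

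First, applying the union bound over the $s^L = N^{1-2\epsilon}$ words in $W_L$,
\begin{equation*}
\mathbb{P}(\mathcal{Q}_N \setminus \mathcal{G}_N) \;\leq\; \sum_{w \in W_L} \mathbb{P}\bigl\{ w' \in W_N : w \not\prec w' \bigr\}.
\end{equation*}
Next, for a fixed $w \in W_L$, I would estimate $\mathbb{P}\{w \not\prec w'\}$ by partitioning the positions $\{1,\dots,N\}$ into $M := \lfloor N/L \rfloor$ disjoint consecutive blocks of length $L$. If $w$ does not appear as a subword of $w'$, then in particular $w$ does not equal the content of any one of these $M$ blocks. Under the uniform measure on $W_N$ the block contents are mutually independent, each equalling $w$ with probability $s^{-L} = N^{-(1-2\epsilon)}$, so
\begin{equation*}
\mathbb{P}(w \not\prec w') \;\leq\; \bigl(1 - s^{-L}\bigr)^{M} \;\leq\; \exp\bigl(-M s^{-L}\bigr).
\end{equation*}

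\medskip

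To finish, I would check the quantitative bound on the exponent. By the choice $L = (1-2\epsilon)\log_s N$ we have $M \geq N/L - 1 \gtrsim N/\log N$ and $s^{-L} = N^{-1+2\epsilon}$, hence
\begin{equation*}
M s^{-L} \;\gtrsim\; \frac{N^{2\epsilon}}{\log N} \;\geq\; N^{\epsilon}
\end{equation*}
for all $N$ sufficiently large (since $N^\epsilon/\log N \to \infty$). Therefore $\mathbb{P}(w \not\prec w') \leq e^{-N^{\epsilon}}$, and summing over the $s^L = N^{1-2\epsilon}$ words $w \in W_L$ yields
\begin{equation*}
\mathbb{P}(\mathcal{Q}_N \setminus \mathcal{G}_N) \;\leq\; N^{1-2\epsilon} e^{-N^{\epsilon}},
\end{equation*}
as claimed.

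\medskip

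There is essentially no obstacle here beyond organizing the union bound correctly; the only mild choice is to use disjoint blocks (rather than the stronger event that $w$ matches at \emph{some} starting position $1 \leq i \leq N-L+1$) in order to retain exact independence. This costs a harmless factor of $\log N$ in the exponent, which is absorbed comfortably since the target $e^{-N^\epsilon}$ is much weaker than the actual block-independence bound $e^{-cN^{2\epsilon}/\log N}$.
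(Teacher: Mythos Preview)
Your proof is correct and follows essentially the same approach as the paper: a union bound over the $s^L = N^{1-2\epsilon}$ words in $W_L$, combined with the block-independence estimate $\mathbb{P}(w \not\prec w') \leq (1 - s^{-L})^{\lfloor N/L\rfloor}$ obtained by cutting $w'$ into disjoint length-$L$ segments. The paper packages the per-word estimate as a separate lemma and assumes without loss of generality that $L \mid N$, but the arithmetic and the absorption of the $\log N$ loss into the exponent are identical to yours.
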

We will prove Proposition \ref{bad words} using the following lemma.
\begin{lemma}\label{boundOnBadWordProb}
Fix $w\in W_L$ and let $\beta_N(w)=\{b\in W_N\colon w\not\prec b\}$. Then
\begin{equation*}
 \mathbb{P}(\beta_N(w))\lesssim e^{-N^\epsilon},
\end{equation*}
where the value of $\epsilon$ is the same as in \eqref{sizeOfL}.
\end{lemma}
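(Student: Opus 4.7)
The plan is to interpret $W_N$ as a probability space under the uniform measure $\mathbb{P}$, so that sampling $b \in W_N$ is equivalent to drawing $N$ i.i.d.\ uniform letters from $\{1,\ldots,s\}$. The condition $w \prec b$ says that $w$ occurs as some contiguous subword of $b$; to get an exponentially small upper bound on $\mathbb{P}(\beta_N(w))$, it will be enough to consider occurrences of $w$ at a single sparse family of positions, namely those that are exact multiples of $L$.

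Concretely, set $m := \lfloor N/L \rfloor$ and partition (a prefix of) $b$ into disjoint consecutive length-$L$ blocks $B_1,\ldots,B_m$. If $b \in \beta_N(w)$, then in particular $B_i \neq w$ for every $i = 1, \ldots, m$. Since these blocks are determined by pairwise disjoint coordinates of $b$, the events $\{B_i = w\}$ are mutually independent and each has probability exactly $s^{-L}$. Therefore
\begin{equation*}
\mathbb{P}(\beta_N(w)) \;\leq\; \prod_{i=1}^m \mathbb{P}(B_i \neq w) \;=\; (1 - s^{-L})^m \;\leq\; \exp(-m\, s^{-L}).
\end{equation*}

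It remains to plug in $L = (1-2\epsilon)\log_s N$ and check that the exponent dominates $N^\epsilon$. With this choice of $L$, $s^{-L} = N^{-(1-2\epsilon)} = N^{2\epsilon - 1}$, and $m \geq N/(2L)$ once $N$ is large. Hence
\begin{equation*}
m\, s^{-L} \;\geq\; \frac{N^{2\epsilon}}{2L} \;\gtrsim\; \frac{N^{2\epsilon}}{\log N},
\end{equation*}
which exceeds $N^\epsilon$ for all sufficiently large $N$; for small $N$ the claimed bound is trivially absorbed into the implicit constant.

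I do not anticipate any real obstacle here — it is a soft block-independence argument. The only thing worth flagging is that discarding the ``straddling'' occurrences of $w$ (those not aligned to a multiple of $L$) is costless since we only need an upper bound on the probability of non-occurrence; this is what makes the independence clean.
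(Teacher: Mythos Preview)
Your argument is correct and matches the paper's proof essentially step for step: both chop $b$ into $\lfloor N/L\rfloor$ disjoint length-$L$ blocks, use independence to bound $\mathbb{P}(\beta_N(w))\le (1-s^{-L})^{\lfloor N/L\rfloor}$, substitute $s^{-L}=N^{-(1-2\epsilon)}$, and absorb the $\log N$ loss into the exponent. The only cosmetic difference is that the paper assumes WLOG that $L\mid N$ rather than taking a floor.
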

For proof, see below. Using Lemma \ref{boundOnBadWordProb}, we have
\begin{equation*}
\begin{split}
\mathbb{P}(W_N\setminus \mathcal{G}_N)&\leq\sum_{w\in W_L} \mathbb{P}(\beta_N(w))\\
&\lesssim s^L e^{-N^\epsilon}\\
&=N^{1-2\epsilon}e^{- N^\epsilon},
\end{split}
\end{equation*}
which establishes Proposition \ref{bad words}.
\begin{proof}[Proof of Lemma \ref{boundOnBadWordProb}]
Without loss of generality, we may assume that $L$ divides $N$, so that we can divide $b$ into segments of length $L$. For each of these segments, the probability that it is not equal to $w$ is $(s^L-1)/s^L=1-s^{-L}$.
Then $\mathbb{P}(\beta_N(w))$ is bounded from above by the probability that all such segments are different from $w$, so that
\begin{align*}
\mathbb{P}(\beta_N(w)) &\leq (1-s^{-L})^{N/L}\\
&=(1-N^{-(1-2\epsilon)})^{N/L}\\
&=\exp\left(\frac{N}{(1-2\epsilon)\log_sN}\,\log(1-N^{-(1-2\epsilon)})\right)
\end{align*}
Using that $\log (1-x)\approx -x$, and absorbing $\log_s N$ into the loss of $\epsilon$ in the exponent, we get the desired estimate.
(Note that a slightly improved bound can be proved using the ``longest run of heads'' estimate from \cite{ER}.)
\end{proof}
We now have all of the necessary tools to prove Theorem \ref{fav-upper}.

\begin{proof}[Proof of Theorem \ref{fav-upper}]

We first recall the estimate from Lemma \ref{m-of-E} on the size of the set of ``bad angles" $E_L$:
\begin{equation}\label{measureOfEL}
 |E_L|\lesssim\frac1{K}.
\end{equation}
We claim that
\begin{equation}\label{boundBadWords}
|P_0(\mathcal{J}_N\setminus\bigcup_{Q\in\mathcal{G}_N}Q)|\lesssim\frac1{K},
\end{equation}
\begin{equation}\label{HLBoundGoodWords}
|P_0(\bigcup_{Q\in\mathcal{G}_N}Q)\setminus E_L| \lesssim\frac1{K}.
\end{equation}

Combining \eqref{measureOfEL}, \eqref{boundBadWords}, and \eqref{HLBoundGoodWords}, we obtain
\begin{equation*}
\begin{split}
|P_0(\mathcal{J}_N)|&\leq |P_0(\mathcal{J}_N\setminus\bigcup_{Q\in\mathcal{G}_N}Q)|+|P_0(\bigcup_{Q\in\mathcal{G}_N}Q)\setminus E_L|+|E_L|\\
&\lesssim\frac1{K},
\end{split}
\end{equation*}
as required.

It remains to prove the above claims. We begin with \eqref{boundBadWords}.
Visibility is sub-additive, and $\vis(0;X)\lesssim\frac{\diam(X)}{\text{dist}(0,X)}$. Assuming $\mathcal{J}_0$ is separated from $0$ and using the fact that $\text{diam}(Q)\sim s^{-N}$ for $Q\in\mathcal{Q}_N$, it follows that
\begin{equation*}
|P_0(\mathcal{J}_N\setminus\bigcup_{Q\in\mathcal{G}_N}Q)|\lesssim \mathbb{P}(\mathcal{Q}_N\setminus\mathcal{G}_N),
\end{equation*}
where the implicit constant depends on $\dist(\mathcal{J}_0,0)$.
By (\ref{favard-lower}), we have
$$
\frac{1}{K}=\sqrt{\Fav(\mathcal{J}_L)}\gtrsim L^{-1/2}.
$$
Hence it suffices to prove that
$$
\mathbb{P}(\mathcal{Q}_N\setminus\mathcal{G}_N)\lesssim L^{-1/2}.
$$
But that follows from
Proposition \ref{bad words} and \eqref{sizeOfL}.

Finally, we prove \eqref{HLBoundGoodWords}.
Consider $\theta\in P_0(\mathcal{G}_N)\setminus E_L$. Since $\theta\notin E_L$, it follows from Corollary \ref{HL-1-cor} that there is a $Q\in\mathcal{Q}_L$ such that $Q$ is $K/2$-stacked above $\theta-\pi/2$. Since $\theta\in P_0(Q')$ for some $Q'\in\mathcal{G}_N$, it follows that $Q'\succ Q$, and thus by Lemma \ref{key1}, $Q'$ is $K/4$-stacked above $\theta-\pi/2$.
Recalling the definition of stacked disks, we conclude that
\begin{equation*}
P_0(\bigcup_{Q\in\mathcal{G}_N}Q )\setminus E_L\subseteq\{\theta:Mf_{N,\theta-\pi/2}(0)\geq \frac{K}{4}\}.
\end{equation*}
Lemma \ref{HL-vis} says that
\begin{equation*}
\{\theta:Mf_{N,\theta-\pi/2}(0)\geq \frac{K}{4}\}\subseteq\{\theta:Mg_N(\theta)\gtrsim K\}.
\end{equation*}
By the Hardy-Littlewood inequality,
\begin{equation*}
 \begin{split}
|P_0(\bigcup_{Q\in\mathcal{G}_N}Q)\setminus E_L| & \leq |\{\theta:Mg_N(\theta)\gtrsim K\}|\\
&\leq\frac1{K}\int g_N(\theta) d\theta\\
&\sim\frac1{K},
 \end{split}
\end{equation*}
which proves \eqref{HLBoundGoodWords}.
\end{proof}
%
%


\section{Some properties of self-similar sets}\label{unrect}
\subsection{Discrete unrectifiability of self-similar sets}\label{unrectA}
\begin{theorem}\label{discrete-unrect}
Let $0<\alpha\leq 1$. Let $\mathcal{J}$ be a self-similar set satisfying the Open Set Condition (see Definition \ref{defnOfSelfSim}) with $\dim(\mathcal{J})=\alpha$. Assume further that $\mathcal{J}$ is not contained in a line.

(a) Assume that $\alpha=1$. Then for $\delta>0$, $\mathcal J^\delta$ is equivalent to a $(\kappa,C_0,\delta)$-unrectifiable one-set for some $C_0$, $\kappa$ depending only on $\mathcal{J}$ but not on $\delta$.

(b) Assume that $0<\alpha<1$. Then for $\delta>0$, $\mathcal{J}^\delta$ is equivalent to a $(\alpha,C_0,\delta)$--set that is unconcentrated on lines, with $C_0$ depending only on $\mathcal{J}$ but not on $\delta$.

(c) In part (b), more is true. Assume that $0<\alpha<1$ and let $\varphi\colon \RR^2\to\RR^2$ be a diffeomorphism. Then for $\delta>0$, $\varphi(\mathcal{J}^\delta)$ is equivalent to a $(\alpha,C_0,\delta)$--set that is unconcentrated on lines, with $C_0$ depending only on $\mathcal{J}$ and $\varphi$, but not on $\delta$.
\end{theorem}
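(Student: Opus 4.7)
My plan is to sandwich $\mathcal{J}^\delta$ between two surrogate sets $\mathcal{A}_1\subset\mathcal{J}^\delta\subset\mathcal{A}_2$ whose defining $\delta$-balls have bounded overlap, and to verify Definition \ref{discreteAlphaDim}/\ref{def-sets} on these. Take a maximal $\delta$-separated subset $\{x_j\}\subset\mathcal{J}$ and set $\mathcal{A}_1=\bigcup_j B(x_j,\delta/4)$ (disjoint balls) and $\mathcal{A}_2=\bigcup_j B(x_j,4\delta)$ (bounded overlap). The OSC implies $\mathcal{J}$ is Ahlfors $\alpha$-regular, which yields $|\{x_j\}\cap B(y,r)|\lesssim (r/\delta)^\alpha$ and hence both $|\mathcal{A}_i|\sim\delta^{2-\alpha}$ and the ball estimate \eqref{unconcentratedBallEstimate}. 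What remains is the line estimate \eqref{unconcentratedLineEstimate} for~(b), the rectangle estimate \eqref{defnOfOneDimUnrecEqn} for~(a), and the diffeomorphism version in~(c).

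The line and rectangle estimates both reduce (by translating Lebesgue to $H^\alpha$) to a power-law tube bound $f(\rho):=\sup_\ell H^\alpha(\mathcal{J}\cap\ell^\rho)\lesssim \rho^\kappa H^\alpha(\mathcal{J})$ for some $\kappa>0$ depending only on $\mathcal{J}$. I would prove this in two steps. For the baseline: since $\mathcal{J}\not\subset\ell$ for any line, a witness $x\in\mathcal{J}\setminus\ell$ together with Ahlfors regularity gives $H^\alpha(\mathcal{J}\cap B(x,d(x,\ell)/2))>0$, hence $H^\alpha(\mathcal{J}\cap\ell)<H^\alpha(\mathcal{J})$ strictly. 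Compactness of the space of lines meeting the convex hull of $\mathcal{J}$ and upper-semicontinuity of $\ell\mapsto H^\alpha(\mathcal{J}\cap\ell^\rho)$ then produce $\rho_0>0$ and $\eta\in(0,1)$ with $H^\alpha(\mathcal{J}\cap\ell^{\rho_0})\leq (1-\eta)H^\alpha(\mathcal{J})$ uniformly in $\ell$. To boost to a power law, decompose $\mathcal{J}=\bigcup_w T_w(\mathcal{J})$ at a scale $r=\rho/\rho_0$ (cells of diameter $\sim r$, disjoint by OSC); inside each cell, rescaled to unit diameter, the tube $\ell^\rho$ becomes a tube of width $\rho_0$, so the baseline gives $H^\alpha(T_w(\mathcal{J})\cap \ell^\rho)\leq (1-\eta)H^\alpha(T_w(\mathcal{J}))$. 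Summing over cells meeting $\ell^\rho$ (all contained in $\ell^{\rho+r}$) yields the functional inequality $f(\rho)\leq (1-\eta)\,f((1+1/\rho_0)\rho)$, which iterates to $f(\rho)\lesssim \rho^\kappa H^\alpha(\mathcal{J})$ with $\kappa=\log(1/(1-\eta))/\log(1+1/\rho_0)>0$. Taking $1/C$ a small fixed constant then gives the line condition \eqref{unconcentratedLineEstimate}; the rectangle condition in part~(a) follows by applying the tube bound at scale $r_2$, noting that an $r_1\times r_2$ rectangle lies in $O(1)$ stage-$k$ cells with $s^{-k}\sim r_2$, which rescale to tubes of relative width $r_1/r_2$, so that $H^1(\mathcal{J}\cap R)\lesssim (r_1/r_2)^\kappa r_2\,H^1(\mathcal{J})=r_1^\kappa r_2^{1-\kappa}H^1(\mathcal{J})$.

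Part~(c) is the main obstacle. The bounded Jacobian of $\varphi$ on $F$ immediately transfers the mass and ball estimates from~(b) to $\varphi(\mathcal{J}^\delta)$. For the line condition, a line $\ell$ in the target pulls back to a $C^2$ curve $\gamma=\varphi^{-1}(\ell)\cap F$, and one must bound $|\mathcal{J}^\delta\cap\gamma^{c/C'}|$ by $|\mathcal{J}^\delta|/10$ for some $C'$ depending only on $\mathcal{J}$ and $\varphi$. I would fix a short length scale $\rho^*$ dictated by the $C^2$-norm of $\varphi^{-1}$, cover $\gamma$ by $N=O(1)$ arcs of length $\rho^*$ each lying within $O((\rho^*)^2)$ of its tangent line, so that each $c/C'$-tube over such an arc is contained in a genuine line neighborhood of width $c/C'+O((\rho^*)^2)$. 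Choosing $C'$ so this width is small enough for the power-law estimate from~(b) to bound the mass on each tangent line neighborhood by $|\mathcal{J}^\delta|/(10N)$, summing over the $N$ arcs closes the bound. The delicate point, and what I expect to be the chief technical obstacle, is that the required width must be quantitatively small in a way depending on $N$ (and thus on $\varphi$ and $F$), which is only affordable because the bound in~(b) is a genuine power law $\rho^\kappa$ rather than a mere deficit $(1-\eta)$; verifying that the resulting $C'$ is $\delta$-independent hinges on this quantitative aspect of the iteration in~(b).
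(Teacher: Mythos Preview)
Your treatment of parts (a) and (b) is correct and close in spirit to the paper's. The main difference is how you obtain the baseline deficit: you use a soft compactness/upper-semicontinuity argument to get $H^\alpha(\mathcal{J}\cap\ell^{\rho_0})\leq(1-\eta)H^\alpha(\mathcal{J})$ uniformly in $\ell$, whereas the paper constructs this explicitly (Lemma~\ref{reFormulationLemma}) by passing to a deeper iteration so that for every line some cylinder $T_i'(\mathcal{J})$ misses the tube. Both then iterate self-similarly to a power law, and your rectangle argument for (a) is essentially the paper's Lemma~\ref{rectEstimateLem}.

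Your argument for part (c), however, has a genuine gap. You cover the curve $\gamma=\varphi^{-1}(\ell)$ by $N\sim L/\rho^*$ arcs of length $\rho^*$, put each arc's $c/C'$-tube inside a \emph{full} line tube of width $w\gtrsim K(\rho^*)^2$, and sum the one-scale tube bound $\lesssim w^\kappa|\mathcal{J}^\delta|$ over the $N$ arcs. No matter how large $C'$ is, the irreducible curvature contribution gives a total of order $N\cdot(K(\rho^*)^2)^\kappa\sim(\rho^*)^{2\kappa-1}$, which does \emph{not} tend to $0$ as $\rho^*\to 0$ when $\kappa\leq 1/2$ (and $\kappa$ may well be that small). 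So there is no choice of $\rho^*$ and $C'$ that closes the estimate in general; the power law alone is not enough to beat the count $N\sim 1/\rho^*$ coming from the $1$-dimensionality of the curve.

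The paper sidesteps this by covering $\mathcal{J}$ rather than $\gamma$: take $\sim C_2^{\alpha/2}$ balls of radius $C_2^{-1/2}$ covering $\mathcal{J}^\delta$. Inside each ball the curve-tube of width $1/C_2$ sits in a rectangle of dimensions $\sim C_2^{-1/2}\times C_2^{-1}$, and the \emph{two-scale} rectangle estimate (Lemma~\ref{rectEstimateLem}, which holds for all $0<\alpha\leq 1$ and which your own argument for (a) also yields for general $\alpha$) gives mass $\lesssim C_2^{-(\alpha+\kappa)/2}|\mathcal{J}^\delta|$ per ball. Summing over $C_2^{\alpha/2}$ balls yields $C_2^{-\kappa/2}|\mathcal{J}^\delta|$, which can be made $\leq|\mathcal{J}^\delta|/10$. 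The two key differences from your plan are: (i) the count of pieces is $\sim r^{-\alpha}$ (from the $\alpha$-dimensionality of $\mathcal{J}$) rather than $\sim r^{-1}$ (from covering a curve), and (ii) the localized rectangle bound $r_1^\kappa r_2^{\alpha-\kappa}$ is used instead of the cruder full-tube bound $w^\kappa$.
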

\begin{remark}
If $\alpha=1$ and $\varphi\colon\RR^2\to\RR^2$ is a diffeomorphism, then Theorem \ref{discrete-unrect}a and Proposition \ref{diffeo} implies that $\varphi(\mathcal{J}^\delta)$ is equivalent to a $(\kappa,C_0,\delta)$-unrectifiable one-set for some $C_0$ depending only on $\mathcal{J}$ and $\varphi,$ and some $\kappa$ depending only on $\mathcal{J}$. Since Proposition \ref{diffeo} doesn't hold for $(\alpha,C_0,\delta)$--sets that are unconcentrated on lines, we cannot apply it to obtain Theorem \ref{discrete-unrect}c from Theorem \ref{discrete-unrect}b. This is why we must prove Theorem \ref{discrete-unrect}c directly.
\end{remark}

Before proving Theorem \ref{discrete-unrect}, we will first need several preliminary lemmas.
\begin{lemma}\label{ballEstimateSelfSimilarScaling}
Let $\mathcal{J}$ be a self-similar set satisfying the Open Set Condition with $\dim(\mathcal{J})=\alpha$, where $0<\alpha\leq 1$. Then
\begin{equation}
C^{-1}\delta^{2-\alpha}|\leq |\mathcal{J}^{\delta}| \leq C\delta^{2-\alpha}.
\end{equation}
where the constant $C$ is independent of $\delta$.
\end{lemma}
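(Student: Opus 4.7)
The plan is to invoke the standard fact that under the Open Set Condition, $\mathcal{J}$ is Ahlfors $\alpha$-regular (see e.g.\ \cite[Theorem 8.6]{falconer-book} or Hutchinson \cite{Hutchinson}): there exist constants $c_1,c_2>0$ depending only on $\mathcal{J}$ such that
\begin{equation*}
c_1 r^\alpha \leq H^\alpha(\mathcal{J} \cap B(x,r)) \leq c_2 r^\alpha
\end{equation*}
for every $x\in \mathcal{J}$ and every $r\in (0,\diam \mathcal{J})$. Given this, the lemma reduces to a standard packing/covering argument. (It suffices to consider $\delta\in(0,\delta_0)$ for some $\delta_0\sim 1$; for larger $\delta$ the estimate is trivial since $|\mathcal{J}^\delta|\sim 1$.)

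For the upper bound, I would take a maximal $\delta$-separated subset $\{x_i\}_{i=1}^N$ of $\mathcal{J}$. By maximality, the balls $B(x_i,\delta)$ cover $\mathcal{J}$, so $\mathcal{J}^\delta\subset \bigcup_i B(x_i,2\delta)$. On the other hand, the balls $B(x_i,\delta/2)$ are pairwise disjoint, so Ahlfors regularity gives
\begin{equation*}
N\cdot c_1 (\delta/2)^\alpha \leq \sum_{i=1}^N H^\alpha(\mathcal{J}\cap B(x_i,\delta/2)) \leq H^\alpha(\mathcal{J}) \lesssim 1,
\end{equation*}
hence $N\lesssim \delta^{-\alpha}$. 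Therefore $|\mathcal{J}^\delta|\leq N\cdot \pi(2\delta)^2 \lesssim \delta^{2-\alpha}$.

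For the lower bound, the same maximal $\delta$-separated set gives a lower bound of the form $N\gtrsim \delta^{-\alpha}$: indeed, the balls $B(x_i,\delta)$ cover $\mathcal{J}$, and applying Ahlfors regularity to any single ball centered at some $x_0\in\mathcal{J}$ with $r\sim 1$ yields $H^\alpha(\mathcal{J})\geq c_1 r^\alpha \sim 1$, while the cover gives $H^\alpha(\mathcal{J}) \leq N\cdot c_2 \delta^\alpha$. To produce a disjoint family of $\delta$-balls inside $\mathcal{J}^\delta$, I would instead take a maximal $2\delta$-separated subset $\{y_j\}_{j=1}^M$ of $\mathcal{J}$; the same argument as above yields $M\gtrsim \delta^{-\alpha}$, while the balls $B(y_j,\delta)$ are pairwise disjoint and contained in $\mathcal{J}^\delta$, giving
\begin{equation*}
|\mathcal{J}^\delta| \geq \sum_{j=1}^M |B(y_j,\delta)| = M\cdot \pi\delta^2 \gtrsim \delta^{2-\alpha}.
\end{equation*}

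There is no real obstacle here beyond correctly citing Ahlfors regularity under the Open Set Condition; once that is in hand, both bounds follow from a one-line comparison between the $\alpha$-dimensional mass on $\mathcal{J}$ and the 2-dimensional mass of a disjoint packing / covering by $\delta$-balls.
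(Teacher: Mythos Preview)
Your proposal is correct and follows essentially the same approach as the paper: invoke Ahlfors--David $\alpha$-regularity of $\mathcal{J}$ under the Open Set Condition, then run a standard packing/covering argument with a maximal $\delta$-separated subset of $\mathcal{J}$ to compare $|\mathcal{J}^\delta|$ with $\delta^{2-\alpha}$. The paper phrases the comparison via the finite-overlap relation $|A|\sim\delta^{-2}|\mathcal{J}^\delta|$ rather than your explicit use of a second $2\delta$-separated set for the lower bound, but this is a cosmetic difference.
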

\begin{proof}
We repeat the argument from \cite[Theorem 5.7]{mattila-book}. Since $\mathcal{J}$ is self-similar and satisfies the Open Set Condition, $\mathcal{J}$ is Ahlfors-David regular (see e.g.~\cite{Hutchinson}). In particular, for all $x\in\mathcal{J}$ and all balls $B$ of radius $r\leq\diam(\mathcal J)$, we have
\begin{equation}\label{H1Estimate}
 \mathcal{H}^\alpha(B\cap\mathcal{J})\sim  r^\alpha,
\end{equation}
where the implicit constants are independent of $r$ and the choice of ball. Choose a maximal $\delta$-separated set $A\subset \mathcal{J}$, then
\begin{equation}\label{H1e1}
\bigcup_{a\in A} B(a,\delta)\subseteq \mathcal{J}^{\delta}\subseteq \bigcup_{a\in A}B(a,2\delta)
\end{equation}
and each collection of balls is finitely overlapping. In particular, this implies that $|A|\sim \delta^{-2}|\mathcal{J}^{\delta}|$. By (\ref{H1Estimate}), the second inclusion in (\ref{H1e1}) implies that
\begin{equation}
\begin{split}
1&\lesssim \mathcal{H}^\alpha(\mathcal{J})\\
&\lesssim \sum_{a\in A} \mathcal{H}^\alpha(\mathcal{J}\cap B(a,2\delta))\\
&\lesssim |A| \, \delta^\alpha\\
&\lesssim (\delta^{-2}|\mathcal{J}^{\delta}|)\delta^\alpha.
\end{split}
\end{equation}
Thus $|\mathcal{J}^{\delta}|\gtrsim\delta^{2-\alpha}$. Similarly, the first inclusion in (\ref{H1e1}) implies that $|\mathcal{J}^{\delta}|\lesssim\delta^{2-\alpha}$.
\end{proof}

\begin{lemma}\label{ballEstimateSelfSimilar}
Let $\mathcal{J}$ be a self-similar set satisfying the Open Set Condition with $\dim(\mathcal{J})=\alpha$, where $0<\alpha\leq 1$. Then for every ball $B=B(x,r)$ of radius $r\geq \delta$, we have the bound
\begin{equation}
|\mathcal{J}^{\delta}\cap B|\leq C_0 r^\alpha|\mathcal{J}^{\delta}|,
\end{equation}
where the constant $C_0$ is independent of $r$ and the choice of ball.
\end{lemma}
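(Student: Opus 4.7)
The plan is to combine Ahlfors--David regularity of $\mathcal{J}$ (which is standard for self-similar sets satisfying the Open Set Condition) with the preceding lemma \ref{ballEstimateSelfSimilarScaling} giving $|\mathcal{J}^\delta|\sim \delta^{2-\alpha}$. Since $\mathcal{J}$ is Ahlfors $\alpha$-regular, we have
\begin{equation*}
\mathcal{H}^\alpha(\mathcal{J}\cap B(y,\rho))\sim \rho^\alpha \qquad \text{for all } y\in\mathcal{J},\ 0<\rho\leq \diam(\mathcal{J}),
\end{equation*}
with constants depending only on $\mathcal{J}$. The case $r\gtrsim \diam(\mathcal{J})\sim 1$ is handled by the trivial bound $|\mathcal{J}^\delta\cap B|\leq |\mathcal{J}^\delta|$, so I may assume $\delta\leq r\leq \diam(\mathcal{J})$.

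First I would pass from volumes in the plane to covering numbers in $\mathcal{J}$. Let $A_r$ be a maximal $\delta$-separated subset of $\mathcal{J}\cap B(x,r+\delta)$. The balls $\{B(a,\delta/2)\}_{a\in A_r}$ are pairwise disjoint and each is contained in $B(x,2r)$, while $\mathcal{J}\cap B(a,\delta/2)\subset \mathcal{J}\cap B(x,2r)$. By Ahlfors regularity applied to each ball $B(a,\delta/2)$ (using $a\in\mathcal{J}$), summing gives
\begin{equation*}
|A_r|\,\delta^\alpha \lesssim \sum_{a\in A_r}\mathcal{H}^\alpha(\mathcal{J}\cap B(a,\delta/2))\leq \mathcal{H}^\alpha(\mathcal{J}\cap B(x,2r))\lesssim r^\alpha,
\end{equation*}
so $|A_r|\lesssim (r/\delta)^\alpha$.

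Next I would show that $A_r$ provides a planar cover. If $y\in \mathcal{J}^\delta\cap B(x,r)$, pick $z\in\mathcal{J}$ with $|y-z|\leq \delta$; then $z\in \mathcal{J}\cap B(x,r+\delta)$, and by maximality of $A_r$ there exists $a\in A_r$ with $|z-a|\leq \delta$, hence $|y-a|\leq 2\delta$. Therefore
\begin{equation*}
\mathcal{J}^\delta\cap B(x,r)\subset \bigcup_{a\in A_r} B(a,2\delta),
\end{equation*}
and taking two-dimensional Lebesgue measure yields
\begin{equation*}
|\mathcal{J}^\delta\cap B(x,r)|\leq |A_r|\cdot |B(0,2\delta)|\lesssim (r/\delta)^\alpha \delta^2 = r^\alpha \delta^{2-\alpha}.
\end{equation*}

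Finally, invoking Lemma \ref{ballEstimateSelfSimilarScaling} to write $\delta^{2-\alpha}\sim |\mathcal{J}^\delta|$ converts this into $|\mathcal{J}^\delta\cap B|\leq C_0 r^\alpha|\mathcal{J}^\delta|$ with $C_0$ depending only on $\mathcal{J}$ through the Ahlfors regularity constants. There is no real obstacle here; the only point deserving care is the bookkeeping of the two scales $r$ and $\delta$ (the hypothesis $r\geq \delta$ is precisely what makes $r+\delta\sim r$, so Ahlfors regularity can be applied on scale $r$ rather than a larger fattened scale).
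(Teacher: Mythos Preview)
Your proof is correct and follows essentially the same approach as the paper: both arguments choose a maximal $\delta$-separated subset of $\mathcal{J}$ near $B$, bound its cardinality by $\lesssim (r/\delta)^\alpha$ via Ahlfors--David regularity (packing disjoint $\delta$-balls inside a ball of radius $\sim r$ and comparing $\mathcal{H}^\alpha$-masses), then use the covering $\mathcal{J}^\delta\cap B\subset\bigcup B(a,2\delta)$ and Lemma~\ref{ballEstimateSelfSimilarScaling} to finish. The only cosmetic difference is that the paper takes a global $\delta$-net $A\subset\mathcal{J}$ and restricts to $A\cap B$, whereas you build a local net inside $\mathcal{J}\cap B(x,r+\delta)$; also, your containment ``each $B(a,\delta/2)$ is contained in $B(x,2r)$'' should strictly be $B(x,r+\tfrac{3}{2}\delta)\subset B(x,\tfrac{5}{2}r)$, but this is harmless.
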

\begin{proof}
Let $A\subset \mathcal{J}$ be a maximal $\delta$-separated set as in the proof of Lemma \ref{ballEstimateSelfSimilarScaling}, and let $A'=A\cap B$.
Then $|A'|\gtrsim  \delta^{-2}|\mathcal{J}^{\delta}\cap B|$, and the balls $B(a,\delta)$ with $a\in A'$ are finitely overlapping.
By (\ref{H1Estimate}), we have
\begin{equation}
\begin{split}
r^\alpha & \gtrsim \mathcal{H}^\alpha(\mathcal{J}\cap B(x,r+\delta))\\
&\gtrsim \sum_{a\in A'} \mathcal{H}^\alpha (\mathcal{J}\cap B(a,\delta))\\
&\gtrsim |A'|\,\delta^\alpha  \\
&\gtrsim \delta^{-2+\alpha} |\mathcal{J}^{\delta}\cap B|.
\end{split}
\end{equation}
Combining this with Lemma \ref{ballEstimateSelfSimilarScaling}, we have
\begin{equation*}
|\mathcal{J}^{\delta}\cap B|\lesssim r^\alpha\delta^{2-\alpha} \lesssim r^\alpha|\mathcal{J}^{\delta}|.\qedhere
\end{equation*}
\end{proof}

\begin{definition}
Recall that $W_n := \{1,\ldots,s\}^n$ and $W=\bigcup_{n=0}^\infty W_n$. For $w=(w_1,\ldots,w_n)\in W_n$, let $T_w := T_{w_n}\circ\cdots\circ T_1$, and let $\lambda_w := \prod_{j=1}^n \lambda_{w_j}.$
\end{definition}
\begin{lemma}\label{reFormulationLemma}
Let $\mathcal J$ be a self-similar set generated by similitudes $T_1,\dots,T_s$, satisfying the Open Set Condition, and not contained in a line. Then there exists a constant $c>0$, a number $s^\prime\geq s$, and a collection of similitudes $T_1^\prime,\ldots,T_{s^\prime}^\prime$ such that
\begin{equation*}
\mathcal{J} = \bigcup_{i=1}^{s^\prime} T^\prime_i(\mathcal J).
\end{equation*}
Furthermore, the similitudes satisfy the Open Set Condition, and they have the property that for any line $\ell\subset\RR^2$, there exists an index $1\leq i\leq s^\prime$ such that $T^\prime_i(\mathcal J)$ is disjoint from $\ell^{2c}$.
\end{lemma}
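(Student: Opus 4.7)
The plan is to take the new similitudes to be iterates of the original IFS by a sufficiently large number of steps, namely $T'_i := T_w$ for $w \in W_n$ with $n$ chosen below, so that $s' = s^n$. The identity $\mathcal{J} = \bigcup_{w \in W_n} T_w(\mathcal{J})$ then follows by iterating \eqref{defnOfJSelfSim}, and the Open Set Condition for the iterated family follows easily from the original one: using the same open set $O$, the sets $\{T_w(O) : w \in W_n\}$ are contained in $O$, and they are pairwise disjoint by inducting on the first index where two words disagree. So the content of the lemma is to pick $n$ large enough to guarantee the line-avoidance property.

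The crucial preliminary step is to establish the following quantitative form of the hypothesis that $\mathcal{J}$ is not contained in a line:
\begin{equation*}
c_1 \;:=\; \inf_{\ell}\; \sup_{x \in \mathcal{J}} \operatorname{dist}(x,\ell) \;>\; 0,
\end{equation*}
where $\ell$ ranges over all lines in $\mathbb{R}^2$. To prove this, note that lines with $\operatorname{dist}(\ell,\mathcal{J}) > \operatorname{diam}(\mathcal{J})$ give $\sup_x \operatorname{dist}(x,\ell) \geq \operatorname{diam}(\mathcal{J})$ trivially, so it suffices to take the infimum over the compact family of lines meeting a fixed bounded neighborhood of $\mathcal{J}$ (parametrize by $(\theta,r) \in [0,\pi) \times [-R,R]$). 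On this compact set the map $\ell \mapsto \sup_{x \in \mathcal{J}} \operatorname{dist}(x,\ell)$ is continuous (since $(x,\ell) \mapsto \operatorname{dist}(x,\ell)$ is jointly continuous and $\mathcal{J}$ is compact), and strictly positive because $\mathcal{J}$ is not contained in any line. Hence its infimum is positive.

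Now set $c := c_1/8$ and choose $n$ so that $\lambda_{\max}^n \operatorname{diam}(\mathcal{J}) < c_1/2$, where $\lambda_{\max} = \max_i \lambda_i < 1$. Then every cylinder $T_w(\mathcal{J})$ with $w \in W_n$ has diameter at most $\lambda_w \operatorname{diam}(\mathcal{J}) \leq \lambda_{\max}^n \operatorname{diam}(\mathcal{J}) < c_1/2$. Given any line $\ell$, pick $x \in \mathcal{J}$ with $\operatorname{dist}(x,\ell) \geq c_1$ using the definition of $c_1$; this $x$ lies in some cylinder $T'_i(\mathcal{J})$, and for every $y$ in that cylinder we have
\begin{equation*}
\operatorname{dist}(y,\ell) \;\geq\; \operatorname{dist}(x,\ell) - |x-y| \;\geq\; c_1 - c_1/2 \;=\; c_1/2 \;>\; 2c,
\end{equation*}
so $T'_i(\mathcal{J}) \cap \ell^{2c} = \emptyset$, as required.

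The only step that requires some care is the compactness argument establishing $c_1 > 0$; once that is in hand the rest is a straightforward pigeonhole on cylinders combined with a diameter shrinkage. No real obstacle arises beyond making sure the compactification of the space of lines is handled cleanly (by restricting attention to lines that come within $\operatorname{diam}(\mathcal{J})$ of $\mathcal{J}$).
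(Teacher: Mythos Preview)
Your proof is correct. Both you and the paper take the iterated IFS $\{T_w : w \in W_n\}$ for large $n$; the difference lies in how the separation constant is produced. The paper argues constructively: since $\mathcal{J}$ is the closure of the fixed points of the maps $T_w$, one can find three words $w_1^*, w_2^*, w_3^*$ (possibly of different lengths) whose fixed points $z_1^*, z_2^*, z_3^*$ are not contained in the $\epsilon$-neighborhood of any line, then iterates each word enough times to land in a common level $W_{M^*}$ with cylinders of diameter $< \epsilon/4$; given any line $\ell$, at least one of the three distinguished cylinders must avoid $\ell^{\epsilon/2}$. Your approach replaces this explicit three-point construction with the single compactness observation that $c_1 := \inf_\ell \sup_{x \in \mathcal{J}} \operatorname{dist}(x,\ell) > 0$, after which any sufficiently deep cylinder containing a point at distance $\geq c_1$ from $\ell$ does the job. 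Your argument is shorter and avoids the bookkeeping of equalizing word lengths; the paper's version is slightly more explicit about \emph{which} cylinders avoid the line (always one of three fixed candidates, independent of $\ell$), though this extra information is not used downstream. One cosmetic point: your parameter space $[0,\pi) \times [-R,R]$ is not literally compact, but this is harmless since the function extends continuously to $\theta = \pi$ (identifying with $\theta = 0$).
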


\begin{proof}
The set $\mathcal{J}$ is the closure of the set of the fixed points of the simiitudes $T_w$, $w\in W$ \cite[Theorem 3 (v)]{Hutchinson}.  Since $\mathcal{J}$ is not contained in a line, there are words $w_i^*\in W_{n_i}$, $i=1,2,3$, and a constant $\epsilon>0$ such that the fixed points $z_i^*$ of $T_{w_i^*}$ cannot all be contained in the $\epsilon$--neighborhood of a line.
Let
\begin{equation*}
w_1^{**} = \overbrace{w_1^* \stackrel{\phantom{|}}{\ldots} w_1^*}^{n_2n_3M\ \textrm{times}},\quad
w_2^{**} = \overbrace{w_2^* \stackrel{\phantom{|}}{\ldots} w_2^*}^{n_1n_3M\ \textrm{times}},\quad
w_3^{**} = \overbrace{w_3^* \stackrel{\phantom{|}}{\ldots} w_3^*}^{n_1n_2M\ \textrm{times}}.
\end{equation*}
Choose $M$ sufficiently large so that for $i=1,2,3$ we have $T_{w}(\mathcal J)\subset B(z_i^*,\epsilon/4)$ whenever $w$ is equal to, or a descendant of, $w_i^{**}$.

Let $M^*=n_1n_2n_3M$, and relabel the collection $\{T_w:\ w\in W_{M^*}\}$ as $\{T^\prime_i:\ i=1,...,s'\}$ with $s'=s^{M^*}$. It is clear that this extended family of similitudes generates the same self-similar set $\mathcal{J}$. Furthermore, $w_i^*\in W_{M^*}$ for $i=1,2,3$, and given any line $\ell$, at least one of the sets $T_{w_i^*}(\mathcal J)$ is disjoint from $\ell^{\epsilon/2}$. Thus the conclusion of the lemma holds with $c=\epsilon/4$.
\end{proof}
\begin{lemma}\label{stripEstimate}
Let $\mathcal J$ be a self-similar set satisfying the Open Set Condition and not contained in a line.
There exist constants $\kappa>0$ and $C$ such that for any line $\ell\subset\RR^2$, and any $\delta,\rho$ with $0<\delta<\rho\leq 1,$
\begin{equation}
|\mathcal J^\delta \cap \ell^\rho| \leq C \rho^\kappa|\mathcal J^\delta|.
\end{equation}
$\kappa$ and $C$ are independent of $\delta$ and $\rho$.
\end{lemma}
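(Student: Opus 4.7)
The plan is to prove the cleaner Frostman-type bound $\mu(\ell^\rho) \lesssim \rho^\kappa$ for the natural self-similar probability measure $\mu$ on $\mathcal{J}$ (which is Ahlfors $\alpha$-regular by the Open Set Condition), and then transfer it to the Lebesgue bound on $\mathcal{J}^\delta \cap \ell^\rho$ using the $\delta$-discretization arguments already underlying Lemmas \ref{ballEstimateSelfSimilarScaling} and \ref{ballEstimateSelfSimilar}. Throughout I will work with the refined similitudes $\{T_i'\}_{i=1}^{s'}$ supplied by Lemma \ref{reFormulationLemma}, whose contraction ratios $\lambda_i'$ still satisfy $\sum_i (\lambda_i')^\alpha = 1$ (since the two families generate the same set $\mathcal{J}$).

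For the first step, the self-similarity relation $\mu = \sum_i (\lambda_i')^\alpha (T_i')_* \mu$, combined with the fact that each $T_i'$ is a similitude of ratio $\lambda_i'$, gives, for $\ell_i := (T_i')^{-1}(\ell)$,
\begin{equation*}
\mu(\ell^\rho) = \sum_{i=1}^{s'} (\lambda_i')^\alpha \mu\bigl(\ell_i^{\,\rho/\lambda_i'}\bigr).
\end{equation*}
For $\rho \leq 2c$, Lemma \ref{reFormulationLemma} produces an index $i_0 = i_0(\ell)$ with $T_{i_0}'(\mathcal{J}) \cap \ell^{2c} = \emptyset$, which pulls back to $\mathcal{J} \cap \ell_{i_0}^{\,2c/\lambda_{i_0}'} = \emptyset$, so the $i_0$-term vanishes. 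Setting $G(\rho) := \sup_\ell \mu(\ell^\rho)$ and $\lambda_* := \min_i \lambda_i'$, and using that $G$ is non-decreasing together with $\rho/\lambda_i' \leq \rho/\lambda_*$, this yields the recursion
\begin{equation*}
G(\rho) \leq (1-\lambda_*^\alpha)\, G(\rho/\lambda_*) \qquad (\rho \leq 2c).
\end{equation*}
Iterating $n$ times is legitimate as long as $\rho \leq 2c\lambda_*^{n-1}$, and applying the trivial bound $G \leq 1$ at the end gives $G(\rho) \leq (1-\lambda_*^\alpha)^n$. Choosing $n$ maximal (so $n \sim \log(1/\rho)/\log(1/\lambda_*)$) produces
\begin{equation*}
G(\rho) \leq C\rho^\kappa, \qquad \kappa := \frac{\log\bigl(1/(1-\lambda_*^\alpha)\bigr)}{\log(1/\lambda_*)} > 0.
\end{equation*}

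For the transfer, take a maximal $\delta$-separated set $A \subset \mathcal{J}$. Since $\delta \leq \rho$, each point of $\mathcal{J}^\delta \cap \ell^\rho$ lies in some $B(a, 2\delta)$ with $a \in A$, and the triangle inequality forces $a \in \ell^{3\rho}$. Hence $|\mathcal{J}^\delta \cap \ell^\rho| \lesssim \delta^2\,|A \cap \ell^{3\rho}|$. The balls $\{B(a, \delta/2)\}_{a \in A \cap \ell^{3\rho}}$ are pairwise disjoint and contained in $\ell^{4\rho}$, and Ahlfors regularity gives $\mu(B(a, \delta/2)) \sim \delta^\alpha$; summing, $|A \cap \ell^{3\rho}|\delta^\alpha \lesssim \mu(\ell^{4\rho}) \lesssim \rho^\kappa$. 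Combining with $|\mathcal{J}^\delta| \sim \delta^{2-\alpha}$ from Lemma \ref{ballEstimateSelfSimilarScaling} proves the claim. The main obstacle is closing the iteration despite the line-dependent, non-uniform choice of $i_0$; the crude monotonicity estimate $G(\rho/\lambda_i') \leq G(\rho/\lambda_*)$ sidesteps this entirely at the price of a somewhat weaker exponent, which is acceptable since only the positivity of $\kappa$ is needed.
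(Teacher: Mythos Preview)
Your proof is correct and the underlying mechanism is the same as the paper's: both iterate Lemma \ref{reFormulationLemma} (``at each scale, some cylinder misses the strip'') to get a geometric decay with base $1-\lambda_*^\alpha$, yielding the identical exponent $\kappa=\log(1/(1-\lambda_*^\alpha))/\log(1/\lambda_*)$. The packaging differs, however. The paper tracks words explicitly: it builds sets $W_m^*\subset W_m$ of ``surviving'' cylinders with $\sum_{w\in W_m^*}\lambda_w^\alpha\leq(1-\lambda_{\min}^\alpha)^m$, stops the iteration at the scale $\rho$, covers $\mathcal{J}^\delta\cap\ell^\rho$ by the balls $B_w$ containing the surviving cylinders, and invokes Lemma \ref{ballEstimateSelfSimilar} on each $B_w$. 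You instead work with the self-similar probability measure $\mu$, encode the iteration as a one-line functional recursion $G(\rho)\leq(1-\lambda_*^\alpha)G(\rho/\lambda_*)$ on $G(\rho)=\sup_\ell\mu(\ell^\rho)$, and only at the end transfer to $|\mathcal{J}^\delta\cap\ell^\rho|$ via Ahlfors regularity and a maximal $\delta$-net. Your route is somewhat more modular (the Frostman bound $\mu(\ell^\rho)\lesssim\rho^\kappa$ is of independent interest and the transfer step is completely decoupled from the iteration), while the paper's version never leaves the $\delta$-neighbourhood framework and is perhaps more in keeping with the surrounding lemmas. Neither approach gains or loses anything quantitatively.
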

\begin{proof}

The proof is based on iterating Lemma \ref{reFormulationLemma}.
To simplify notation, we shall assume that the similitudes $T^\prime_1,\ldots,T^\prime_{s^\prime}$ from Lemma \ref{reFormulationLemma} were the original ones. For $j=1,\dots,s$, let $W_1^{(j)}\subset W_1$ be the set of one-letter words in the alphabet $\{1,\dots,s\}\setminus\{j\}$.

We may assume that $\rho<c/2$, since otherwise the result is immediate if $C$ is sufficiently large.
Then there is an index $i$ such that $(T_i(\mathcal J))^{c}$ is disjoint from $\ell^{c}$. Let $W_1^*=W_1^{(i)}$, then $|W_1^*|=s-1$. Let $c_1=\sum_{i=1}^s \lambda_i^\alpha -\lambda_{\min}^{\alpha} =1-\lambda_{\min}^\alpha<1$. Then $\sum_{w\in W_1^{(j)}}\lambda_w^\alpha\leq c_1$,
and in particular
$$\sum_{w\in W_1^*}\lambda_w^\alpha\leq c_1.$$
We now iterate the procedure. For each $w\in W_1^*$, the set $T_w(\mathcal{J})$ is a similar copy of $\mathcal{J}$, rescaled by the factor $\lambda_w\geq\lambda_{\min}$. By a rescaling of Lemma \ref{reFormulationLemma}, there is a letter $k(w)\in \{1,\ldots,s\}$ such that $(T_{k(w)}\circ T_w(\mathcal J))^{c\lambda_{\min}}$ is disjoint from $\ell^{c\lambda_{\min}}$. Let $W_2^*$ be the set of all words of the form $ww'$ with $w\in W_1^*$ and $w'\in W_1^{(k(w))}$. Then $|W_2^*|=(s-1)^2$. Furthermore, we have
$$
\sum_{w^*\in W_2^*}\lambda_{w^*}^\alpha=\sum_{w\in W_1^*}\lambda_w^\alpha\sum_{w^\prime\in W_1^{( k(w))}}\lambda_{w^\prime}^\alpha
\leq \sum_{w\in W_1^*}\lambda_w^\alpha c_1\leq c_1^2.
$$
Continuing in this manner for $m=3,4,\dots$, we find sets $W_m^*\subset W_m$ such that $|W_m^*|=(s-1)^m$
and  $(T_{w}(\mathcal J))^{c\lambda_{\min}^{m-1}}$ is disjoint from $\ell^{c\lambda_{\min}^{m-1}}$ for $w\in W_m\backslash W_m^*$. Moreover, we have
\begin{equation}\label{e-ew}
\sum_{w\in W_m^*}\lambda_w^\alpha \leq c_1^m.
\end{equation}
We halt the procedure when $c\lambda_{\min}^m\leq \rho\leq c\lambda_{\min}^{m-1}$, so that $m\sim
\frac{\log\rho}{\log\lambda_{\min}}$. At that stage, we have
$$
\mathcal{J}^\delta\cap \ell^\rho
\subseteq \mathcal{J}^{c\lambda_{\min}^{m-1}} \cap \ell^\rho
\subseteq \bigcup_{w\in W_m^*} (T_w(\mathcal{J}))^{c\lambda_{\min}^{m-1}} \cap\ell^\rho.
$$
Each set $ (T_w(\mathcal{J}))^{c\lambda_{\min}^{m-1}} $ is contained in a ball $B_w$ of radius
$\lesssim \lambda_w$, with the implicit constant independent of $m$ and $w$. If necessary, we may increase this constant by a factor $\sim 1$ so that each $B_w$ has radius greater than $\delta$. By Lemma \ref{ballEstimateSelfSimilar} and (\ref{e-ew}), we have
\begin{align*}
|\mathcal{J}^\delta\cap \ell^\rho &|\leq \sum_{w\in W_m^*} |\mathcal{J}^\delta\cap B_w|\\
&\lesssim \sum_{w\in W_m^*} \lambda_w^\alpha |\mathcal{J}^\delta|\\
&\lesssim c_1^m  |\mathcal{J}^\delta|\\
&\sim c_1^{(\log\rho)/(\log \lambda_{\min})} |\mathcal{J}^\delta|\\
&=\rho^{\kappa}  |\mathcal{J}^\delta|
\end{align*}
with $\kappa=\frac{\log c_1}{\log\lambda_{\min}}>0$.
\end{proof}

\begin{lemma}\label{rectEstimateLem}
Let $\mathcal J$ be a self-similar set of dimension $\alpha$ with $0<\alpha\leq 1$, satisfying the Open Set Condition and not contained in a line. Then there exist some $0<\kappa\leq\alpha$ and a constant $C$ so that
\begin{equation}
|\mathcal J^\delta \cap R| \leq C r_1^{\kappa}r_2^{\alpha-\kappa}|\mathcal J|
\end{equation}
whenever $R$ is a rectangle of dimensions $\delta\leq r_1\leq r_2$.
\end{lemma}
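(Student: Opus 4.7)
\textbf{Proof plan for Lemma \ref{rectEstimateLem}.} (I read the right-hand side as $|\mathcal{J}^\delta|$, in line with Lemmas \ref{ballEstimateSelfSimilar} and \ref{stripEstimate}; for $\alpha<1$, $|\mathcal{J}|=0$.) The strategy is to combine the ball estimate (Lemma \ref{ballEstimateSelfSimilar}) and the strip estimate (Lemma \ref{stripEstimate}) via a self-similar rescaling at scale $r_2$. A rectangle $R$ of dimensions $r_1\le r_2$ is contained in $B\cap \ell^{r_1}$, where $\ell$ is the line through the long axis of $R$ and $B$ is a ball of radius $\sim r_2$. The ball estimate alone gives $r_2^\alpha$, so the point is to extract the extra factor $(r_1/r_2)^\kappa$ by applying the strip estimate \emph{inside each Moran cell of size $r_2$} that meets $B$.

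\emph{Step 1 (Moran decomposition at scale $r_2$).} We may assume $r_2\leq \diam(\mathcal J)$ and, for the nontrivial case, that $r_1<\lambda_{\min}r_2$; otherwise $r_1^\kappa r_2^{\alpha-\kappa}\sim r_2^\alpha$ and Lemma \ref{ballEstimateSelfSimilar} applied to $B$ finishes the proof. Let
\[
\mathcal W_{r_2}:=\{w\in W\,:\,\lambda_w\le r_2<\lambda_{w^-}\},
\]
where $w^-$ is the parent of $w$. Then $\mathcal{J}=\bigcup_{w\in\mathcal W_{r_2}}T_w(\mathcal J)$, and for every $w\in\mathcal W_{r_2}$ the contraction ratio satisfies $\lambda_w\sim r_2$ and the cell $T_w(\mathcal J)$ has diameter $\sim r_2$. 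Using the Open Set Condition, the interiors $T_w(O)$ are disjoint and each contains a ball of radius $\sim r_2$, so at most $O(1)$ cells $T_w(\mathcal{J})$ meet the ball $B$; call these cells $T_{w_1}(\mathcal J),\ldots,T_{w_k}(\mathcal J)$ with $k=O(1)$.

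\emph{Step 2 (Rescale each cell and apply the strip estimate).} Fix one of the relevant words $w=w_i$. The similitude $T_w^{-1}$ is a scaling by $1/\lambda_w$ composed with a rigid motion, so it sends $\ell$ to a line $\ell_w$ and
\[
|T_w(\mathcal J)^\delta \cap \ell^{r_1}|=\lambda_w^{2}\,|\mathcal J^{\delta/\lambda_w}\cap \ell_w^{r_1/\lambda_w}|.
\]
Our case assumption $r_1<\lambda_{\min}r_2$ together with $\lambda_w\ge \lambda_{\min}r_2$ gives $\delta/\lambda_w<r_1/\lambda_w<1$, so Lemma \ref{stripEstimate} applies and yields
\[
|\mathcal J^{\delta/\lambda_w}\cap \ell_w^{r_1/\lambda_w}|\lesssim (r_1/\lambda_w)^{\kappa}\,|\mathcal J^{\delta/\lambda_w}|\sim (r_1/\lambda_w)^{\kappa}(\delta/\lambda_w)^{2-\alpha},
\]
where we used Lemma \ref{ballEstimateSelfSimilarScaling}. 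Multiplying by $\lambda_w^2$ and using $\lambda_w\sim r_2$,
\[
|T_w(\mathcal J)^\delta \cap \ell^{r_1}|\lesssim r_1^\kappa\,\lambda_w^{\alpha-\kappa}\,\delta^{2-\alpha}\sim r_1^\kappa\,r_2^{\alpha-\kappa}\,\delta^{2-\alpha}.
\]

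\emph{Step 3 (Sum over cells).} Since $R\subset B\cap \ell^{r_1}$ and only $O(1)$ cells meet $B$,
\[
|\mathcal J^\delta\cap R|\le \sum_{i=1}^{k}|T_{w_i}(\mathcal J)^\delta\cap \ell^{r_1}|\lesssim r_1^\kappa\,r_2^{\alpha-\kappa}\,\delta^{2-\alpha}\sim r_1^\kappa\,r_2^{\alpha-\kappa}\,|\mathcal{J}^\delta|,
\]
where the last step again invokes Lemma \ref{ballEstimateSelfSimilarScaling}. This is the desired estimate, with the same $\kappa$ provided by Lemma \ref{stripEstimate}. The only genuinely delicate point is the Moran-cell bookkeeping in Step 1 when the $\lambda_i$ are not equal; everything else is a direct consequence of the preceding lemmas and self-similarity.
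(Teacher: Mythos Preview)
Your proof is correct and follows essentially the same route as the paper: a Moran decomposition at scale $r_2$, the observation via the Open Set Condition that only $O(1)$ cells meet the containing ball, and then a rescaled application of Lemma~\ref{stripEstimate} inside each cell. You also correctly spotted that the right-hand side should read $|\mathcal J^\delta|$ rather than $|\mathcal J|$.
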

\begin{proof}
We may assume that $r_1\leq C_1^{-1} r_2$ for some large $C_1$, since otherwise the lemma follows trivially from Lemma \ref{stripEstimate}.
Choose a ball $B_0:=B(a, 2r_2)$ so that $R\subset B_0$.
For each $w\in W$ such that $T_w(\mathcal J)\cap B_0\neq\emptyset$, let $\tilde w$ be the shortest word such that $w$ is a child of $\tilde w$ and  $\lambda_{\tilde w} \leq r_2.$ (Note that we then also have
$\lambda_{\tilde{w}}\geq r_2\lambda_{\min} \sim r_2$.)
Let $\mathcal W$ be the set of such maximal words. Then $(T_w(\mathcal{J}))^\delta\subset B(a,C_2r_2)$ for some $C_2\sim 1$. Note also that if $w,w'\in\mathcal{W}$, then $w$ cannot be a descendant of $w'$.

Since $\mathcal{J}$ satisfies the Open Set Condition, we have
$H^\alpha (T_w(\mathcal J)\cap T_{w'}(\mathcal J))=0$ if $w,w'\in\mathcal{W}$ and $w\neq w'$ (see \cite[Section 4.13]{mattila-book}).
Furthermore,
$$H^\alpha (T_w(\mathcal J))\sim \lambda_w^\alpha (\mathcal{J})\sim r_2^\alpha.$$
It follows from this and (\ref{H1Estimate}) that
$$
r_2^\alpha\gtrsim H^\alpha (\mathcal J)\cap B(a,C_2r_2))
\gtrsim \sum_{w\in\mathcal{W}}H^\alpha (T_w(\mathcal J))
\gtrsim r_2^\alpha |\mathcal{W}|,
$$
so that $|\mathcal{W}|\sim 1$.

For each word $w\in\mathcal W$, scale the set $(T_{w}(\mathcal J))^\delta$ by a factor of $\lambda_{w}^{-1}\sim r_2^{-1}$, obtaining a homothetic copy of $\mathcal J^{\delta/\lambda_{w}}$. The image of $R$ under the same  scaling is a rectangle $R^\prime(w)$ of dimensions $\lambda_w^{-1}r_2 \times \lambda_w^{-1}{r_1}$.
By Lemma \ref{stripEstimate}, we have
\begin{equation}
|\mathcal J^{\delta/\lambda_w}\cap R^\prime(w)|\leq C (r_1/\lambda_{w})^\kappa|\mathcal J^{\delta/\lambda_{w}}|
\end{equation}
Undoing the scaling and using Lemma \ref{ballEstimateSelfSimilarScaling}, we obtain
\begin{equation}\label{contribOneWord}
\begin{split}
|(T_{w}(\mathcal J))^\delta \cap R| & \lesssim r_2^\alpha  (r_1/\lambda_{w})^\kappa |\mathcal J^\delta|\\
&\lesssim r_1^{\kappa}r_2^{\alpha-\kappa}|\mathcal J^\delta|.
\end{split}
\end{equation}
Since $\mathcal J^\delta \cap R\subseteq \bigcup_{w\in\mathcal{W}}((T_w(\mathcal J))^\delta\cap R)$ and
$|\mathcal{W}|\sim 1$, the lemma follows.
\end{proof}

\begin{proof}[Proof of Theorem \ref{discrete-unrect}]
We will first prove parts (a) and (b) of the theorem. By Lemmas \ref{ballEstimateSelfSimilarScaling} and \ref{ballEstimateSelfSimilar}, we have
$C^{-1}\delta^{2-\alpha}\leq |\mathcal{J}^{\delta}| \leq C\delta^{2-\alpha}$, and furthermore
$\mathcal{J}^\delta$ obeys (\ref{unconcentratedBallEstimate}) (note that for $r\leq\delta$, the last estimate is trivial). The bound (\ref{unconcentratedLineEstimate}) follows from Lemma \ref{stripEstimate}. Moreover,
if $\alpha=1$, the estimate (\ref{defnOfOneDimUnrecEqn}) follows from Lemma \ref{rectEstimateLem}.

We will now prove part (c). Let $\varphi\colon\RR^2\to\RR^2$ be a diffeomorphism, and let $\ell\subset\RR^2$ be a line.  We need to show that for $C$ sufficiently large (depending only on $\mathcal J$ and $\varphi$),
\begin{equation}\label{diffeoLineConc}
|\varphi(\mathcal{J}^\delta\cap\ell^{1/C})| \leq |\varphi(\mathcal{J}^\delta)|/10.
\end{equation}
We will show that for every $C_1>0$, there is a constant $C_2$ so that
\begin{equation}\label{diffeoLineCondition}
|\mathcal{J}^\delta\cap(\varphi^{-1}(\ell))^{1/C_2}|\leq C_1^{-1}|\mathcal{J}^\delta|.
\end{equation}
Since $\varphi$ has Jacobian $\sim 1$ on the convex hull of $\mathcal{J},$ \eqref{diffeoLineCondition} will imply \eqref{diffeoLineConc}.

By Lemma \ref{ballEstimateSelfSimilarScaling}, $\mathcal{J}^{C_2^{-1/2}}$ can be coved by $O(C_2^{\alpha/2})$ balls of radius $C_2^{1/2}$. This implies that $\mathcal{J}^{\delta}$ can be covered by $O(C_2^{\alpha/2})$ balls of radius $C_2^{1/2}$. Let $B$ be one of these balls. Then $B\cap \mathcal{J}^\delta\cap(\varphi^{-1}(\ell))^{1/C_2}$ is contained within $O(1)$ rectangles of dimensions $C_1^{-1/2}\times C_1^{-1}$. By Lemma \ref{rectEstimateLem}, we have
\begin{equation*}
B\cap \mathcal{J}^\delta\cap(\varphi^{-1}(\ell))^{1/C_2}\lesssim (C_2^{-\kappa})(C_2^{-1/2})^{\alpha-\kappa}|\mathcal{J}^\delta|
\end{equation*}
Summing the contribution from all $O(C_2^{\alpha/2})$ balls, we conclude that
\begin{equation*}
\begin{split}
\mathcal{J}^\delta\cap(\varphi^{-1}(\ell))^{1/C_2}&\lesssim C_2^{\alpha/2} (C_2^{-\kappa})(C_2^{-1/2})^{\alpha-\kappa}|\mathcal{J}^\delta|\\
&\lesssim C_2^{-\kappa/2}|\mathcal{J}^\delta|.
\end{split}
\end{equation*}
Thus if we select $C_2$ sufficiently large compared to $C_1$, we obtain \eqref{diffeoLineCondition}.

The only remaining point is that $\mathcal{J}^\delta$ might not be a union of finitely overlapping $\delta$-balls.
Choose a maximal $\delta$-separated set $A\subset \mathcal{J}$ as in the proof of Lemma \ref{ballEstimateSelfSimilarScaling}, and let $\mathcal{A}=\bigcup_{a\in A} B(a,\delta)$. This is a finitely overlapping collection of balls. By (\ref{H1e1}),
we have
$\mathcal{A}\subseteq \mathcal{J}^{\delta}$, and conversely, $\mathcal{J}^\delta$ can be covered by finitely many translates of $\mathcal{A}$. Thus $\mathcal J^\delta$ is equivalent to $\mathcal{A}$. It follows that all of the above estimates hold with $\mathcal{J}^\delta$ replaced by $\mathcal{A}$. In particular, when $\alpha=1$ then $\mathcal{A}$ is a $(\kappa,C_0,\delta)$-unrectifiable one-set, and for $0<\alpha<1$, $\mathcal{A}$ is a $(\alpha,C_0,\delta)$--set that is unconcentrated on lines.
\end{proof}


\subsection{Self-similar sets have large projection in every direction}
The next lemma is used in the proof of Proposition \ref{hochman-prop}.

\begin{lemma}\label{easy}
Let $\mathcal{J}$ be a self-similar set not contained in a line. Then there is an $\alpha>0$ such that $\dim\pi_\theta(\mathcal{J})\geq\alpha$ for all $\theta\in[0,2\pi]$.
\end{lemma}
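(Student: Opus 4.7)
The plan is to reduce the problem to a three-map sub-IFS satisfying the Strong Separation Condition, and then to run a uniform Frostman-type estimate on the projected natural measure. Since $\mathcal{J}$ is not contained in a line and the set of fixed points of the finite compositions $T_w$, $w\in W$, is dense in $\mathcal{J}$, I can choose words $w_1,w_2,w_3\in W$ whose respective fixed points $p_1,p_2,p_3$ are non-collinear. Non-collinearity together with continuity and compactness in $\theta$ then yields
\begin{equation*}
c_0 \;:=\; \inf_{\theta\in[0,2\pi)}\,\max_{i\neq j}\bigl|\pi_\theta(p_i)-\pi_\theta(p_j)\bigr| \;>\; 0,
\end{equation*}
since otherwise some $\theta$ would force all three projected points to coincide. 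Now replace each $w_i$ by a sufficiently high power $w_i^n$: the map $T_{w_i^n}$ has the same fixed point $p_i$ and contraction ratio $\mu_i:=\lambda_{w_i}^n$, which can be made as small as needed. Choosing $n$ large enough so that $\mu_i\diam(\mathcal{J})<c_0/100$ for $i=1,2,3$ ensures that the images $T_{w_i^n}(\mathcal{J}_0)$ lie in pairwise disjoint balls around the $p_i$, so the three-map IFS $\{T_{w_1^n},T_{w_2^n},T_{w_3^n}\}$ satisfies the Strong Separation Condition. Its attractor $\mathcal{J}^{**}$ is a subset of $\mathcal{J}$, reducing the problem to bounding $\dim\pi_\theta(\mathcal{J}^{**})$ from below uniformly in $\theta$.

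The key geometric observation is that for every angle $\theta$ and every interval $I\subset\RR$ of length at most $c_0/10$, at most two of the three projected pieces $\pi_\theta(T_{w_i^n}(\mathcal{J}^{**}))$ meet $I$. Indeed, each piece lies in an interval of length less than $c_0/50$ centered at $\pi_\theta(p_i)$, and by the definition of $c_0$ some pair $\pi_\theta(p_i),\pi_\theta(p_j)$ is separated by at least $c_0$, which is far larger than the diameter of any set hitting all three pieces.

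Let $\nu$ denote the Bernoulli $(\tfrac13,\tfrac13,\tfrac13)$ measure on $\{1,2,3\}^{\NN}$ pushed forward through the coding map to $\mathcal{J}^{**}$, so that $\nu=\tfrac13\sum_{i=1}^{3}(T_{w_i^n})_*\nu$. Define
\begin{equation*}
\Phi(r) \;:=\; \sup_{\theta,\,y}\,(\pi_\theta\nu)\bigl([y,y+r]\bigr).
\end{equation*}
Pushing the self-similar identity through $\pi_\theta$ and using $\pi_\theta\circ T_{w_i^n}(x)=\pi_\theta(p_i)+\mu_i\,\pi_{\theta_i'}(x-p_i)$, where $\theta_i'$ is $\theta$ rotated by the angle of $\mathcal{O}_{w_i}^n$, each summand $(\pi_\theta\circ T_{w_i^n})_*\nu$ evaluated on $[y,y+r]$ equals $\pi_{\theta_i'}\nu$ of an interval of length $r/\mu_i\leq r/\mu_{\min}$. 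Combining this with the two-out-of-three overlap bound from the previous paragraph yields, for $r\leq c_0/10$,
\begin{equation*}
\Phi(r) \;\leq\; \tfrac{2}{3}\,\Phi(r/\mu_{\min}).
\end{equation*}
Iterating this recursion with the trivial base case $\Phi(r)\leq 1$ gives $\Phi(r)\lesssim r^{\alpha}$ with $\alpha=\log(3/2)/\log(1/\mu_{\min})>0$, uniformly in $\theta$. The mass distribution principle (Frostman's lemma) then yields $\dim\pi_\theta(\mathcal{J}^{**})\geq\alpha$ for every $\theta$, and since $\pi_\theta(\mathcal{J}^{**})\subseteq\pi_\theta(\mathcal{J})$, the conclusion follows.

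The main technical obstacle is the presence of the rotations $\mathcal{O}_i$, which prevents $\pi_\theta(\mathcal{J})$ from being a one-dimensional self-similar set in its own right; it is rather a graph-directed attractor in which projections at different angles feed into each other. Taking the supremum $\Phi(r)$ over all directions simultaneously closes the recursion on a single quantity at the cost of a weaker exponent, but since we only need positivity of $\alpha$ (not its sharp value), this weakening is harmless.
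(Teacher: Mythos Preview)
Your proof is correct and takes a different route from the paper's. The paper argues purely set-theoretically: for each fixed $\theta$ it locates, at some iteration level $n$, two cylinder disks $Q_{w_1},Q_{w_2}\in\mathcal{Q}_n$ with disjoint $\pi_\theta$-projections; a compactness argument in $\theta$ then upgrades this to a single $n$ valid for all directions. Iterating the self-similarity inside each disk produces, for every $\theta$, a binary Cantor subset of $\pi_\theta(\mathcal{J})$ with contraction bounded by $\lambda_{\min}^n$, yielding the uniform lower bound $\alpha=\log 2/\log(\lambda_{\min}^{-n})$.

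Your three-non-collinear-fixed-points device replaces the compactness step: since three non-collinear points cannot all project to one short interval, some pair of projected points is $c_0$-separated at \emph{every} angle, and this gives the two-out-of-three overlap bound directly and uniformly in $\theta$. You then run a Frostman recursion on the projected Bernoulli measure instead of building the Cantor set by hand; taking $\Phi(r)$ as a supremum over all directions is exactly what closes the recursion when the rotations $\mathcal{O}_{w_i}$ send $\theta$ to $\theta_i'$. The paper's argument is a bit more elementary (no measures) and makes the Cantor structure explicit; your approach avoids the compactness argument entirely, handles rotations more transparently, and as a byproduct yields a uniform Frostman measure on every $\pi_\theta(\mathcal{J})$, which is marginally stronger than the bare dimension bound.
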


\begin{proof}
We will use the notation from the proof of Theorem \ref{discrete-unrect}.

First, fix $\theta$. Since $\mathcal{J}$ is not contained in a line, for all $n$ large enough we may find words $w_1,w_2\in W_n$ such that $\pi_\theta(Q_{w_1})$ and $\pi_\theta(Q_{w_2})$ are disjoint. Note further that if $n'>n$, then the same is true with $w_1,w_2$ replaced by any pair of their respective descendants $w'_1,w'_2\in W_{n'}$.

It is clear from the construction above that the same $n$, with the same words $w_1$ and $w_2$, works also for $\theta'$ in a small enough neighbourhood $U(\theta)$ of $\theta$. By compactness and the argument above, we may find a value of $n$ that works for all $\theta$. Let $\lambda=\lambda_{\min}^n.$ The interval $\pi_\theta(Q_w)$ has length at least $c\lambda$ for all $w\in W_n$.
Let $\alpha=\frac{\log\lambda}{\log(1/2)}>0$.

It is then easy to see that for each $\theta$, the set $\pi_\theta(\mathcal{J})$ contains a (not necessarily self-similar) Cantor set of dimension at least $\alpha$, obtained by iterating the construction above. Specifically, for each fixed $\theta$, we have two disjoint intervals $I_1=\pi_\theta(Q_{w_1})$ and $I_2=\pi_\theta(Q_{w_2})$ of length at least $\lambda$ contained in $\pi_\theta(\mathcal{J}_n)$. Continuing by induction, the sets $Q_{w_i}\cap \mathcal{J}_n$ contain a (possibly rotated) self-similar copy of $\mathcal{J}_n$, so that each of the sets
$I_i\cap \pi_\theta( \mathcal{J}_{2n})$ contains at least two disjoint intervals $I_{i,1}$ and $I_{i,2}$ of length at least $\lambda^2$ that are obtained by projecting discs of $\mathcal{J}_{2n}$, and so on. This proves the claim.
\end{proof}


\bibliographystyle{amsplain}

\noindent{\sc Bond, {\L}aba: Department of Mathematics, University of British Columbia, Vancouver,
B.C. V6T 1Z2, Canada}

\smallskip

\noindent{\it  bondmatt@math.ubc.ca, ilaba@math.ubc.ca}

\medskip

\noindent{\sc Zahl: Department of Mathematics, MIT, Cambridge MA, 02139, USA}

\smallskip

\noindent{\it  jzahl@mit.edu}

\end{document}